\documentclass[a4paper,12pt,reqno]{amsart}
\usepackage{amsmath,amsfonts,amssymb,amsthm,mathtools,autobreak,atendofenv}
\usepackage{bm,ascmac,array,framed,physics,empheq}
\usepackage{circledsteps}
\usepackage[T1]{fontenc}
\usepackage{textcomp}
\usepackage[normalem]{ulem}
\usepackage{mathrsfs,enumerate}
\usepackage[shortlabels]{enumitem}
\usepackage[all]{xy}
\usepackage[dvipdfmx]{graphicx}
\usepackage{tikz}
\usepackage{url}
\usetikzlibrary{cd,intersections,calc,arrows,matrix,shapes.geometric,positioning}
\usepackage{tcolorbox}
\usepackage{color}
\usepackage{appendix}
\usepackage{stackengine}
\usepackage{here}
\usepackage[dvipdfmx,hidelinks]{hyperref}
\hypersetup{unicode,bookmarksnumbered=true,hidelinks,final}
\theoremstyle{definition}
\newtheorem{theorem}{Theorem}[section]
\newtheorem{definition}[theorem]{Definition}
\newtheorem{proposition}[theorem]{Proposition}
\newtheorem{lemma}[theorem]{Lemma}
\newtheorem{corollary}[theorem]{Corollary}
\newtheorem{remark}[theorem]{Remark}
\newtheorem{assumption}[theorem]{Assumption}
\newtheorem*{proposition*}{Proposition}
\newcommand{\coeff}{\mathop{\mathrm{coeff}}}
\mathtoolsset{showonlyrefs=true}
\numberwithin{equation}{section}
\makeatletter
\allowdisplaybreaks
\makeatother
\begin{document}
\title[Moduli of Higgs pairs]{The moduli space of Higgs pairs}
\author{Jun Sasaki}
\date{}
\address{Department of Mathematics, Institute of Science Tokyo, 2-12-1, O-okayama, Meguro, 152-8551, Japan}
\email{sasaki.j.ac[@]m.titech.ac.jp, sasaki.j.fb69[@]m.isct.ac.jp}
\subjclass[2020]{Primary:53C07; Secondary:58D27}
\keywords{Hermitian-Yang-Mills connection, doubly-coupled vortex equation, stability, Poincar\'{e} polynomial}
\begin{abstract}
  In this paper, we study the moduli space of Higgs pairs, which can be considered as a generalization of holomorphic pairs. Higgs pairs are an example of quiver bundles. We introduce the notion of $\tau$-stability of Higgs pairs for $\tau\in\mathbb{R}$ and establish the Kobayashi-Hitchin correspondence for Higgs pairs. The differential-geometric objects corresponding to stable Higgs pairs is called the vortex equations for Higgs bundles. We analyze the moduli space of stable Higgs pairs when the base space of vector bundle is a compact Riemann surface and obtaine the following results.
  \begin{itemize}
    \item We prove that the moduli space is non-singular complex manifold for a suitable choice of $\tau$.
    \item We determine the Poincar\'{e} polynomial of the moduli space for $\rank 2$ bundle.
    \item We construct a map from the moduli space of stable Higgs pairs to the moduli space of stable Higgs bundles and proved that the map is a fibration under suitable assumptions.
  \end{itemize}
\end{abstract}
\maketitle
\setcounter{section}{-1}
\section{Introduction}
Let $\left(M,g\right)$ be an $n$-dimensional compact K\"{a}hler manifold and $\omega$ be the K\"{a}hler form of $\left(M,g\right)$. A pair $\left(E, D^{\prime\prime}\right)$ is called a Higgs bundle over $M$ if $E$ is a smooth complex vector bundle over $M$ and $D^{\prime\prime}:\Omega^0\left(E\right)\to\Omega^1\left(E\right)$ is a $\mathbb{C}$-linear mapping satisfying the Leibniz rule
\[D^{\prime\prime}\left(fs\right)=fD^{\prime\prime}s+s\otimes \bar{\partial}f, \quad \forall{f}\in C^\infty\left(M\right),\forall{s}\in\Omega^0\left(E\right),\]
and the integrability condition
\[D^{\prime\prime}\circ D^{\prime\prime}=0:\Omega^0\left(E\right)\to\Omega^2\left(E\right),\]
where $\Omega^k\left(E\right)$ denotes a space of the $E$-valued smooth $k$-forms for $k\in\mathbb{Z}_{\geq 0}$. A Higgs bundle is considered as a generalization of holomorphic vector bundle since if we write $D^{\prime}=\bar{\partial}^E+\theta$ according to the decomposition $\Omega^1\left(E\right)=\Omega^{1,0}\left(E\right)\oplus\Omega^{0,1}\left(E\right)$, $\bar{\partial}^E$ defines the holomorphic structure of $E$.

Let $\left(E,h\right)$ be a smooth Hermitian vector bundle over $M$ and $\mathcal{H}\left(E,h\right)$ denotes the space of integrable unitary connections, that is, $\mathcal{H}\left(E,h\right)$ consists of a unitary connection $\nabla$ such that the $\left(0,2\right)$-part of curvature $R\left(\nabla\right)$ of $\nabla$ vanishes. A connection $\nabla\in\mathcal{H}\left(E,h\right)$ is called a Hermitian-Einstein (HE) connection of $\left(E,h\right)$ if it satisfies the following equation.
\[\sqrt{-1}\Lambda R\left(\nabla\right)=c\,\textrm{id}_E,\quad c=\frac{2\pi\mu\left(E\right)}{\textrm{Vol}\left(M,g\right)},\]
where $\Lambda$ is the contradiction with the K\"{a}hler form $\omega$. It is known that HE connections attain the minimum of the functional $\textrm{YM}:\mathcal{H}\left(E,h\right)\to\mathbb{R}_{\geq 0}$ defined by, for $\nabla\in\mathcal{H}\left(E,h\right)$, $\textrm{YM}\left(\nabla\right)=\left\|R\left(\nabla\right)\right\|_{L^2\left(M\right)}^2$.
 Uhlenbeck and Yau \cite{Uhlenbeckyau} proved a theorem called the Kobayashi-Hitchin (KH) correspondence, which asserts that a holomorphic vector bundle is stable if and only if there is a Hermitian metric such that its Chern connection are HE connection and the metric is unique up to constant multiplicity. This theorem expresses an energy-minimizing condition in terms of an algebraic condition.

The KH correspondence is valid for Higgs bundles. Let $\left(E,h\right)$ be a smooth Hermitian vector bundle over $M$. For a connection $D$ on $E$, consider a decomposition $D=D^\prime+D^{\prime\prime}$ of $D$ as in \cite[p. $13$]{Simpson2} and consider $\mathscr{H}\left(E,h\right)$ consisting of connections $D$ on $E$ such that $D^{\prime\prime}$ determines a structure of Higgs bundle in $E$. A connection $D\in\mathscr{H}\left(E,h\right)$ is called a Hermitian-Yang-Mills (HYM) connection of $\left(E,h\right)$ if it satisfies the following equation.
\[\sqrt{-1}\Lambda R\left(D\right)=c\,\textrm{id}_E,\quad c=\frac{2\pi\mu\left(E\right)}{\textrm{Vol}\left(M,g\right)}.\]
It is also known that HYM connections attain the minimum of the functional $\textrm{HYM}:\mathscr{H}\left(E,h\right)\to\mathbb{R}_{\geq 0}$ defined by, for $D\in\mathscr{H}\left(E,h\right)$, $\textrm{HYM}\left(D\right)=\left\|R\left(D\right)\right\|_{L^2\left(M\right)}^2$. The HYM condition for $D\in\mathscr{H}\left(E,h\right)$ can be expressed equivalently by the following equation.

Hitchin and Simpson \cite{Hitchin,Simpson} proved that a Higgs bundle are stable if and only if there is a Hermitian metric such that its Hitchin-Simpson (HS) connection, which is a generalization of the Chern connection for Higgs bundle, are HYM connection and the metric is unique up to constant multiplicity.

For smooth Heritian vector bundles $\left(E_1,h_1\right),\left(E_2,h_2\right)$ over $M$, Garc\'{i}a-Prada \cite{Garcia-Prada} considered an $SU\left(2\right)$-equivalent vector bundle 
\[\left(F=p^*E_1\oplus p^*E_2\otimes q^*O\left(2\right), h=p^*h_1\oplus p^*h_2\otimes q^*h^\prime\right)\]
over $M\times\mathbb{CP}^1$, where $p$ and $q$ are projection from $M\times\mathbb{CP}^1$ to $M$ and $\mathbb{CP}^1$, respectively and $h^\prime$ is the standard Hermitian metric on $O\left(2\right)$. For a real number $\tau$, we define a K\"{a}hler form $\omega_\sigma$ on $M\times\mathbb{CP}^1$ defined by
\[\omega_\sigma=p^*\omega\oplus q^*\sigma\omega_{FS},\quad \sigma=\frac{2\rank E_2}{\displaystyle\frac{\rank E_1+\rank E_2}{4\pi}\tau-\frac{\deg E_1+\deg E_2}{\textrm{Vol}\left(M,g\right)}},\]
where $\omega_{FS}$ is the Fubini-Study metric on $\mathbb{CP}^1$ with $\textrm{Vol}\left(\mathbb{CP}^1,\omega_{FS}\right)=1$. By using dimension reduction, Garc\'{i}a-Prada established a one-to-one correspondence between $SU\left(2\right)$-invariant HE connections on $\left(F,h\right)$ with respect to $\omega_\sigma$ and solutions to the coupled $\tau$-vortex equations, which are given by, for $\nabla^{E_1}\in\mathcal{H}\left(E_1,h_1\right),\nabla^{E_2}\in\mathcal{H}\left(E_2,h_2\right)$ and $\phi\in\Omega^0\bigl(\textrm{Hom}\left(E_2,E_1\right)\bigr)$,
\begin{empheq}[left=\empheqlbrace]{align}
  &\sqrt{-1}\Lambda R\left(\nabla^{E_1}\right)+\frac{1}{2}\phi\circ\phi^*=\frac{\tau}{2}\textrm{id}_{E_1},\\
  &\sqrt{-1}\Lambda R\left(\nabla^{E_2}\right)-\frac{1}{2}\phi^*\circ\phi=\frac{\tau^\prime}{2}\textrm{id}_{E_2},\\
  &\left(\nabla^{\textrm{Hom}\left(E_2,E_1\right)}\right)^{0,1}\phi=0,
\end{empheq}
where $\tau,\tau^\prime\in\mathbb{R}$ satisfy the following relation.
\[\tau\rank E_1+\tau^\prime\rank E_2=\frac{4\pi}{\textrm{Vol}\left(M,g\right)}\left(\deg E_1+\deg E_2\right).\]

In $2025$, Ono \cite{Ono} developed a dimensional reduction of the HYM equation in a manner analogous to Garc\'{i}a-Prada's argument for the HE equation and thereby he introduced the doubly-coupled $\tau$-vortex equations, which are given by, for $D_{E_1}\in\mathscr{H}\left(E_1,h_1\right), D_{E_2}\in\mathscr{H}\left(E_2,h_2\right), \phi\in\Omega^0\bigl(\textrm{Hom}\left(E_2,E_1\right)\bigr)$ and $\psi\in\Omega^0\bigl(\textrm{Hom}\left(E_1,E_2\right)\bigr)$,
\begin{empheq}[left=\empheqlbrace]{align}
  &\sqrt{-1}\Lambda R\left(D_{E_1}\right)+\frac{1}{2}\phi\circ\phi^*-\frac{1}{2}\psi^*\circ\psi=\frac{\tau}{2}\textrm{id}_{E_1},\\
  &\sqrt{-1}\Lambda R\left(D_{E_2}\right)-\frac{1}{2}\phi^*\circ\phi+\frac{1}{2}\psi\circ\psi^*=\frac{\tau^\prime}{2}\textrm{id}_{E_2},\\
  &D_{\textrm{Hom}\left(E_2,E_1\right)}^{\prime\prime}\phi=0,\quad D_{\textrm{Hom}\left(E_1,E_2\right)}^{\prime\prime}\psi=0, \label{eq:third equation}\\
  &\phi\circ\psi=0,\quad \psi\circ\phi=0 \label{eq:last equation}.
\end{empheq}
Ono also introduced a notion of Higgs quadruplet and its stability. a Higgs quadruplet is a quadruplet $\left(\left(E_1,D_{E_1}^{\prime\prime}\right),\left(E_2,D_{E_2}^{\prime\prime}\right),\phi,\psi\right)$ consisting of Higgs bundles $\left(E_1,D_{E_1}^{\prime\prime}\right)$ and $\left(E_2,D_{E_2}^{\prime\prime}\right)$ over $M$ and $\phi\in\Omega^0\bigl(\textrm{Hom}\left(E_2,E_1\right)\bigr),\psi\in\Omega^0\bigl(\textrm{Hom}\left(E_1,E_2\right)\bigr)$ such that they satisfy \eqref{eq:third equation} and \eqref{eq:last equation}. Ono established the KH correspondence for Higgs quadruplets, that is, a Higgs quadruplet $\left(\left(E_1,D_{E_1}^{\prime\prime}\right),\left(E_2,D_{E_2}^{\prime\prime}\right),\phi,\psi\right)$ is $\tau$-stable if and only if there are Hermitian metrics on $E_1$ and $E_2$ such that their HS connections, $\phi$ and $\psi$ are solution to the doubly-coupled $\tau$-vortex equations and the metrics are unique up to constant multiplicity.

In this paper, we consider the doubly-coupled $\tau$-vortex equation when $E_2$ is the trivial line bundle. We set $E=E_1$ and $h=h_1$. The third and last equations imply that $\phi\in\Omega^0\left(E\right)$ and $\psi\in\Omega^0\left(E^*\right)$ are holomorphic and $\textrm{supp}\,\phi$ and $\textrm{supp}\,\psi$ are disjoint. Thus at least one of $\phi$ and $\psi$ vanishes. Suppose that $\psi$ vanishes and we set $s=\phi$. Then as a special case of the doubly-coupled $\tau$-vortex equations, we introduce the $\tau$-vortex equations for Higgs bundles, which is given by, for $D\in\mathscr{H}\left(E,h\right)$ and $s\in\Omega^0\left(E\right)$,
\[\sqrt{-1}\Lambda R\left(D\right)+\frac{1}{2}s\circ s^*=\frac{\tau}{2}\textrm{id}_{E_1},\quad D^{\prime\prime}s=0.\]
These equations can be considered as a generalization of $\tau$-vortex equations introduced by Bradlow \cite{Bradlow2}.

We also introduce a notion of Higgs pair, which consists of a strucutre of Higgs bundle $D^{\prime\prime}$ in $E$ and a smooth section $s\in\Omega^0\left(E\right)$ such that $D^{\prime\prime}s=0$ holds. We can consider Higgs pairs as a generalization of holomorphic pair. \'{A}lvarez-C\'{o}nsul and Garc\'{i}a-Prada \cite{Alvarez-Consul-Garcia-Prada} introduced quiver bundles over compact K\"{a}hler manifolds and established the KH correspondence for quiver bundles. Holomorphic pairs, Higgs bundles, Higgs quadruplets and Higgs pairs are all examples of quiver bundles. Hence we obtain the KH correspondence for Higgs pairs. A Higgs pair $\left(D^{\prime\prime},s\right)$ is said to be $\tau$-stable if it satisfies the following conditions.
\begin{enumerate}
    \item $\displaystyle\mu\left(F\right)<\frac{\textrm{Vol}\left(M,g\right)}{4\pi}\tau$ holds for every Higgs subsheaf $F$ of $\left(E,D^{\prime\prime}\right)$ with $0<\rank F$.
    \item $\displaystyle\mu\left(E/F\right)>\frac{\textrm{Vol}\left(M,g\right)}{4\pi}\tau$ holds for every Higgs subsheaf $F$ of $\left(E,D^{\prime\prime}\right)$ with $s\in H^0\left(F\right)$ and $0<\rank F<\rank E$.
\end{enumerate}
The KH correspondence for Higgs pairs asserts that for a Higgs pair $\left(D^{\prime\prime},s\right)$, the following are equivalent.
\begin{enumerate}[$\left(1\right)$]
  \item There exists a Hermitian metric $h$ on $E$ such that $\left(D^{\prime\prime},s\right)$ is a solution to the $\tau$-vortex equation for Higgs bundle \eqref{eq:vortex_Higgs}.
  \item $\left(D^{\prime\prime},s\right)$ satisfies either $\left(a\right)$ or $\left(b\right)$.
  \begin{enumerate}[$\left(a\right)$]
    \item $\left(E,D^{\prime\prime}\right)$ is $\tau$-stable.
    \item There exists Higgs subbundles $E_s, E^\prime$ of $\left(E,D^{\prime\prime}\right)$ such that $E$ splits holomorphically as $E=E_s\oplus E^\prime$, where $E_s$ and $E^\prime$ satisfy the following conditions.
    \begin{itemize}
      \item $E^\prime$ is poly-stable Higgs sheaf with $\displaystyle\mu\left(E^\prime\right)=\frac{\textrm{Vol}\left(M,g\right)}{4\pi}\tau$ holds.
      \item $E_s$ contains the section $s$ and a Higgs pair $\left(E_s,s\right)$ is $\tau$-stable.
    \end{itemize}
  \end{enumerate}
\end{enumerate}
Therefore if we choose $\tau$ so that $\displaystyle\tau\ne\frac{4\pi}{\textrm{Vol}\left(M,g\right)}\mu\left(F\right)$, for any Higgs subbundle $F$ of $E$, conditions $\left(1\right)$ and $\left(2\right)\left(a\right)$ are equivalent.

The main result of this paper concerns the moduli space of $\tau$-stable Higgs pairs, denoted by $\mathscr{M}_{\textrm{HP}}^{\textrm{st}}$, when the base space of fixed vector bundle is a compact Riemann surface. Firstly, we prove that the moduli space is a non-singular complex manifold when we choose $\tau\in\mathbb{R}$ appropriately. The deformation of quiver bundles are studied by Gothen and King \cite{Gothen-King}. However, a Higgs pair is not merely a quiver bundle. We require that a Higgs field $\theta$ and a section $s$ are related by $\theta\left(s\right)=0$. When we consider the deformation of Higgs pairs, we must deform in such a way that this relation is preserved. Thus we cannot apply their theory. Therefore we descrive the $2$-nd cohomology group of deformation complex of Higgs pairs explicitly and prove that this vanishes. (See Proposition \ref{prop:H^2_vanishing_1} and \ref{prop:H^2_vanishing_2}.) Secondly, we determine the Poincar\'{e} polynomial of the space $\mathscr{M}_{\textrm{HP}}^{\textrm{st}}$ for $\rank 2$ bundle. Hitchin \cite{Hitchin} determined the Poincar\'{e} polynomial of the moduli space of stable $\rank 2$ and odd degree Higgs bundles with fixed determinant by using Morse theory. Biswas and Schumacher \cite{Biswas} proved that the moduli space of quiver bundles admits a K\"{a}hler metric. Thus the moduli space $\mathscr{M}_{\textrm{HP}}^{\textrm{st}}$ is also a K\"{a}hler manifold. Also, $\mathscr{M}_{\textrm{HP}}^{\textrm{st}}$ has a Hamiltonian circle action. By analyzing the critical set of the moment map associated with the action, we obtain the Poincar\'{e} polynomial of $\mathscr{M}_{\textrm{HP}}^{\textrm{st}}$. (See Theorem \ref{thm:Poincare}.) Lastly, we construct a map from the moduli space $\mathscr{M}_{\textrm{HP}}^{\textrm{st}}$ to the moduli space of stable Higgs bundles under a certain assumption. This result can be considered as a generalization of Bradlow and Daskalopoulos's one \cite[Proposition $6.1$]{Bradlow3}. (See Proposition \ref{prop:fibration}.)

The notion of a Higgs pair was first introduced by Mehta \cite{Mehta} as a Higgs triple. Compared with his work, he did not prove that the entire moduli space of $\tau$-stable Higgs pairs is smooth and established only the smoothness of a certain open subset. Moreover, he proved the smoothness by showing that the cup product $H^1\times H^1\to H^2$ vanishes (cf. \cite[Proposition $4.5$]{Mehta}) and did not prove that the $2$-nd cohomology group of the deformation complex vanishes. On the other hand, in this paper, we prove the smoothness of the entire of the moduli space by showing that the $2$-nd cohomology group vanishes directly. He also considered the $S^1$-action on the moduli space. However, the action is on the section $s$ in his paper, whereas it is on the Higgs field $\theta$ in this paper.

This paper is organized as follows.
In \textbf{Section 1}, we recall notions of Higgs bundles, HYM connections and equivalent vector bundles.
In \textbf{Section 2}, we derive the doubly-coupled $\tau$-vortex equations from the $SU\left(2\right)$-invariant HYM equations via dimensional reduction. We also introduce a Yang-Mills-type functional whose minima is attained at the solution to the doubly-coupled $\tau$-vortex equations.
In \textbf{Section 3}, we introduce the notion of Higgs pairs and the $\tau$-vortex equations for Higgs bundles as a special case of doubly-coupled $\tau$-vortex equations. Then we formulate the KH correspondence for Higgs pairs.
In \textbf{Section 4}, we construct elliptic complexes $\left(\mathscr{C}^*\right)$ and $\left(\mathscr{B}^*\right)$ arising from the infinitesimal deformations of Higgs pairs and solutions to the $\tau$-vortex equations for Higgs bundles. If the $0$-th and $2$-nd cohomology groups of these complexes vanish, the moduli spaces of Higgs pairs and of solutions to the $\tau$-vortex equations for Higgs bundles are smooth complex manifolds.
In \textbf{Section 5}, under an assumption that the base space of fixed vector bundle is a Riemann surface, we analyze the moduli space of stable Higgs pairs. This section contains the main result of this paper.
\section{Preliminaries}
Throughout this paper, we assume that complex manifolds are connected. Let $M$ be a complex manifold and $E$ be a smooth complex vector bundle of rank $r$ over $M$. First, we recall the definitions of Higgs bundles and the Hermitian-Yang-Mills (HYM) connections.
\subsection{Higgs bundles}
\begin{definition}\cite{Simpson2}
Let $D^{\prime\prime}:\Omega^0\left(E\right)\to\Omega^1\left(E\right)$ be a $\mathbb{C}$-linear mapping. A pair $\left(E,D^{\prime\prime}\right)$ is called a \textit{Higgs bundle} over $M$ if $D^{\prime\prime}$ satisfies the Leibniz rule
\begin{equation}
    D^{\prime\prime}\left(fs\right)=fD^{\prime\prime}s+s\otimes\bar{\partial}f \label{eq:Leibniz rule}
\end{equation}
for $f\in C^\infty\left(M\right),s\in\Omega^0\left(E\right)$ and the integrability condition
\begin{equation}
    D^{\prime\prime}\circ D^{\prime\prime}=0:\Omega^0\left(E\right)\to\Omega^2\left(E\right). \label{eq:integrability condition}
\end{equation}
A triple $\left(E,D^{\prime\prime},h\right)$ is called a \textit{Hermitian Higgs bundle} over $M$ if $\left(E,D^{\prime\prime}\right)$ is a Higgs bundle over $M$ and $h$ is a smooth Hermitian metric on $E$.
\end{definition}

According to the decomposition $\Omega^1\left(E\right)=\Omega^{1,0}\left(E\right)\oplus\Omega^{0,1}\left(E\right)$, we write the structure of Higgs bundle in $E$ as $D^{\prime\prime}=\bar{\partial}^E+\theta$. Then by the condition \eqref{eq:integrability condition}, $\bar{\partial}^E$ defines a holomorphic structure in $E$ and $\theta$ is an $\textrm{End}\left(E\right)$-valued holomorphic $1$-form on $M$ satisfying $\theta\wedge\theta=0$. The $1$-form $\theta$ is called a \textit{Higgs field}.

Let $\left(E,D^{\prime\prime}=\bar{\partial}^E+\theta,h\right)$ be a Hermitian Higgs bundle over $M$ and $\nabla_h^E$ be the Chern connection on a holomorphic Hermitian vector bundle $\left(E,\bar{\partial}^E,h\right)$. We decompose $\nabla_h^E$ as $\nabla_h^E=\partial_h^E+\bar{\partial}^E$. We define an operator $D_h^\prime:\Omega^0\left(E\right)\to\Omega^1\left(E\right)$ by $D_h^\prime=\partial_h^E+\theta_h^*$, where $\theta_h^*\in\Omega^{0,1}\bigl(\textrm{End}\left(E\right)\bigr)$ is the adjoint of $\theta$ with respect to $h$. Then $D_h^\prime$ satisfies the integrability condition $D_h^\prime\circ D_h^\prime=0:\Omega^0\left(E\right)\to\Omega^2\left(E\right)$. Also, we define an operator $D_h:\Omega^0\left(E\right)\to\Omega^1\left(E\right)$ as $D_h=D_h^\prime+D^{\prime\prime}$. This is a connection on $E$ and is called the \textit{Hitchin-Simpson (HS) connection} on the Hermitian Higgs bundle $\left(E,D^{\prime\prime},h\right)$ \cite[p.$4$]{Bruzzo}. By the integrability conditions for $D_h^\prime$ and $D^{\prime\prime}$, the curvature $R\left(D_h\right)$ of $D_h$ is given by
\begin{equation}
  R\left(D_h\right)=R\left(\nabla_h^E\right)+\left[\theta\wedge\theta_h^*\right]+\left(\partial_h^{\textrm{End}\left(E\right)}\theta+\bar{\partial}^{\textrm{End}\left(E\right)}\theta_h^*\right).
\end{equation}
The HS connection on a Hermitian Higgs bundle is considered as a generalization of the Chern connection on a holomorphic Hermitian vector bundle.
\subsection{Hermitian-Yang-Mills Connection}
Let $\left(M,g\right)$ be an $n$-dimensional compact Kähler manifold, $\left(E,D^{\prime\prime}=\bar{\partial}^E+\theta,h\right)$ be a Hermitian Higgs bundle of rank $r$ over $M$ and $\omega$ be the K\"{a}hler form of $\left(M,g\right)$.
\begin{definition} \cite{Simpson2}
  Consider the following equation for the HS connection $D_h$ on $\left(E,D^{\prime\prime},h\right)$:
  \begin{equation}
    \sqrt{-1}\Lambda R\left(D_h\right)=c\,\textrm{id}_E,\quad c=\frac{2\pi\mu\left(E\right)}{\textrm{Vol}\left(M,g\right)} \label{eq:1.5}
  \end{equation}
  This equation is called the \textit{Hermitian-Yang-Mills (HYM) equation}. When $D_h$ satisfies this equation, $h$ is called a \textit{HYM metric} and $D_h$ is called a \textit{HYM connection} on $\left(E,D^{\prime\prime}\right)$.
\end{definition}

\begin{remark}
  Since $\left(M,g\right)$ is a compact Kähler manifold, the constant $c$ is determined by $M$, $g$, and $E$, and does not depend on $h$ and $D_h$.
\end{remark}

We decompose $D_h$ as $D_h=\nabla_h^E+\Theta_h$, where $\nabla_h^E$ is the Chern connection on a holomorphic vector bundle $\left(E,\bar{\partial}^E,h\right)$ and $\Theta_h=\theta+\theta_h^*\in\Omega^1\bigl(\textrm{Herm}\left(E,h\right)\bigr)$. Then if $\theta=0$, the HYM equation coincides with the Hermitian-Einstein (HE) equation. Thus the HYM equation can be considered as a generalization of the HE equation.
\subsection{Equivalent Hermitian vector bundles}\label{subsec:equiv.Herm.vec.bdl}
Next, we recall the notion of equivalent vector bundles. Let $M$ be a smooth manifold, $\left(E,h\right)$ be a smooth Hermitian vector bundle over $M$ and $G$ be a compact Lie group smoothly acting on $M$. $\textrm{Diff}\left(M\right), U\left(E,h\right)$ and $\mathcal{G}\left(E,h\right)$ denote the group of diffeomorphisms of $M$, the group of unitary bundle automorphisms of $\left(E,h\right)$ and the gauge group of $\left(E,h\right)$, respectively. Then we have an exact sequence
\[\xymatrix{\left\{\textrm{id}_E\right\}\ar[r]&\mathcal{G}\left(E,h\right)\ar[r]&U\left(E,h\right)\ar[r]^-{\pi}&\textrm{Diff}\left(M\right),}\]
where $\pi$ is defined so that for $g\in U\left(E,h\right)$, the following diagram commutes
\[\xymatrix{E\ar[r]^-{g}\ar[d]_-{\pi_E}&E\ar[d]^{\pi_E}\\
M\ar[r]^-{\pi\left(g\right)}&M.}\]
Let $\mathscr{H}$ be the preimage of $G \subset\textrm{Diff}\left(M\right)$ under $\pi$.
\begin{definition}\cite{Garcia-Prada}
  Given a homomorphism $\varphi:G\to\mathscr{H}$, a smooth Hermitian vector bundle $\left(E,h\right)$ is called a \textit{$G$-equivalent Hermitian vector bundle} with respect to $\varphi$ if the sequence
  \[\xymatrix{\left\{\textrm{id}_E\right\}\ar[r]&\mathcal{G}\left(E,h\right)\ar[r]&\mathscr{H}\ar[r]&G\ar[r]&\left\{\textrm{id}_M\right\}}\]
  is exact and split and the splitting is compatible with respect to $\varphi$. Furthermore, the metric $h$ is called the \textit{$G$-invariant Hermitian metric} on $E$.
\end{definition}
In the following, for a $G$-equivalent Hermitian vector bundle $\left(E,h\right)$ and the left-action $\varphi:G\to\mathscr{H}$, we write $g\cdot v$ for $\varphi\left(g\right)\left(v\right)$ for $g\in G$ and $v\in E$.

Let $\left(E,h\right)$ be a $G$-equivalent Hermitian vector bundle. The action of $G$ on $\left(E,h\right)$ induces the left-action on $\Omega^0\left(E\right)$ and right-action on $\mathscr{A}\left(E,h\right)$ and $\mathcal{G}\left(E,h\right)$. For $\gamma\in G$ and $s\in\Omega^0\left(E\right)$, the action on $\Omega^0\left(E\right)$ is given by
\[\left(\gamma\cdot s\right)_p:=\gamma\cdot\left(s_{\gamma^{-1}\cdot p}\right),\quad p\in M.\]
Similarly, for $\gamma\in G,\nabla\in\mathscr{A}\left(E,h\right)$ and $g\in\mathcal{G}\left(E,h\right)$, the action on $\mathscr{A}\left(E,h\right)$ and $\mathcal{G}\left(E,h\right)$ are given by
\begin{empheq}[left=\empheqlbrace]{align}
  &\left(\nabla\cdot\gamma\right)_X\left(s\right):=\gamma^{-1}\cdot\nabla_{d\gamma\left(X\right)}\left(\gamma\cdot s\right),\quad s\in\Omega^0\left(E\right),X\in\mathfrak{X}\left(M\right),\\
  &\left(g\cdot\gamma\right)\left(s\right):=\gamma^{-1}\cdot g\left(\gamma\cdot s\right),\quad s\in\Omega^0\left(E\right).
\end{empheq}
These actions of $G$ induce the right-action on the quotient space ${\mathscr{A}\left(E,h\right)}/{\mathcal{G}\left(E,h\right)}$.

For a connection $D$ on $E$, we can decompose it as $D=\nabla_h+\Theta_h$ uniquely, where $\nabla_h\in\mathscr{A}\left(E,h\right)$ and $\Theta_h\in\Omega^1\left(\textrm{Herm}\left(E,h\right)\right)$. $G$ acts on $\Omega^1\left(\textrm{Herm}\left(E,h\right)\right)$ from the right by, for $\gamma\in G$ and $\Theta\in\Omega^1\left(\textrm{Herm}\left(E,h\right)\right)$,
\[\left(\Theta\cdot\gamma\right)\left(s\right)\left(X\right):=\gamma^{-1}\cdot\Bigl(\Theta\left(\gamma\cdot s\right)\bigl(d\gamma\left(X\right)\bigr)\Bigr),\quad s\in\Omega^0\left(E\right),X\in\mathfrak{X}\left(M\right).\]
Therefore $G$ acts on the space $\mathscr{A}\left(E\right)$ of connections on $E$ from the right.

In this paper, we consider an $SU\left(2\right)$-equivalent Hermitian vector bundle over $M\times\mathbb{CP}^1$. Since the complex projective line $\mathbb{CP}^1$ is isomorphic to ${SU\left(2\right)}/{U\left(1\right)}$, a compact Lie group $SU\left(2\right)$ acts on $\mathbb{CP}^1$ from the right smoothly. Suppose that $SU\left(2\right)$ acts on $M$ trivially, then it acts on $M\times\mathbb{CP}^1$.
\begin{proposition}\cite{Garcia-Prada}\label{thm:SU(2)-decomposition}
  \textit{Let $\left(F,h\right)$ be an $SU\left(2\right)$-equivalent Hermitian vector bundle over $M\times\mathbb{CP}^1$.
  \begin{enumerate}
    \item $F$ can be decomposed as
    \begin{equation}\label{SU(2)-equiv..vec.bdl_decomposition}
      F=\bigoplus_kF_k,\quad F_k=p^*E_k\otimes q^*O\left(n_k\right),
    \end{equation}
    where $p$ and $q$ are the projections to $M$ and $\mathbb{CP}^1$, respectively, $E_k$ is a smooth complex vector bundle over $M$ and $n_k\in\mathbb{Z}$ are all different. This decomposition is unique up to isomorphism.
    \item The vector bundles $F_k$ given in \eqref{SU(2)-equiv..vec.bdl_decomposition} are $SU\left(2\right)$-invariantly orthogonal to each other.
    \item Let $\widetilde{h}_k$ be the $SU\left(2\right)$-invariant Hermitian metric on $F_k$ induced by $h$ and $h_k^\prime$ be a standard Hermitian metric on $O\left(n_k\right)$. Then there are Hermitian metrics $h_k$ on $E_k$ such that
    \[\widetilde{h}_k=p^*h_k\otimes q^*h_k^\prime\]
    holds.
  \end{enumerate}}
\end{proposition}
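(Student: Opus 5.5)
The plan is to decompose $F$ fiberwise under the isotropy subgroup and then spread the decomposition along each $SU(2)$-orbit. Since $SU(2)$ acts trivially on $M$ and transitively on $\mathbb{CP}^1=SU(2)/U(1)$, fix the base point $o=[U(1)]\in\mathbb{CP}^1$ with stabilizer $U(1)$. The restriction $F|_{M\times\{o\}}$ is a $U(1)$-equivariant Hermitian bundle over $M$, with $U(1)$ acting trivially on the base. So it splits into isotypic components
\[F|_{M\times\{o\}}=\bigoplus_k E_k,\]
where $E_k$ is the subbundle on which $e^{\sqrt{-1}t}$ acts by the character $e^{\sqrt{-1}n_kt}$. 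Each $E_k$ is the image of the projection $\int_{U(1)}\chi_k(g)^{-1}\,g\,dg$, which is a smooth family of idempotents. These projections are $h$-self-adjoint, because $h$ is $U(1)$-invariant, so the $E_k$ are mutually $h$-orthogonal. Since $U(1)$ is connected and the weights are discrete, the ranks of the projections are locally constant, and $M$ is connected. Hence each $E_k$ is a smooth subbundle of constant rank, and the weights $n_k$ are pairwise distinct by construction.

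Next I would use the standard equivalence between $SU(2)$-equivariant bundles on $M\times SU(2)/U(1)$ and $U(1)$-equivariant bundles on $M$, namely the induction $\mathcal{V}\mapsto SU(2)\times_{U(1)}\mathcal{V}$. This gives an equivariant isomorphism
\[F\cong\bigoplus_k SU(2)\times_{U(1)}E_k.\]
I expect this to be the main obstacle: one has to construct the map $[g,v]\mapsto g\cdot v$, which is well defined, and check that it is a smooth isomorphism. That follows from the transitivity of the action and the freeness of the right $U(1)$-action on $SU(2)$, so $SU(2)\to\mathbb{CP}^1$ is a principal $U(1)$-bundle. Because $U(1)$ acts trivially on the base $M$, we have $SU(2)\times_{U(1)}E_k\cong p^*E_k\otimes q^*\bigl(SU(2)\times_{U(1)}\mathbb{C}_{n_k}\bigr)$, and the second factor is the homogeneous line bundle $O(n_k)$, with the sign convention chosen to match the paper. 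This proves (1). Uniqueness follows because any such decomposition restricts at $o$ to the isotypic decomposition, which is canonical.

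For (2), the $SU(2)$-invariance of $h$ together with the orthogonality of the $E_k$ at $o$ gives orthogonality of the $F_k$ at every point $g\cdot o$, since $g$ acts isometrically.

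For (3), define $h_k:=h|_{E_k}$ on $M\times\{o\}$. This is $U(1)$-invariant, and $U(1)$ acts on $E_k$ by scalars. The metric $p^*h_k\otimes q^*h^\prime_k$ is also $SU(2)$-invariant, where $h^\prime_k$ is the standard $SU(2)$-invariant metric on $O(n_k)$, normalized to be $1$ on the fiber at $o$ with respect to the identification $\mathbb{C}_{n_k}$. Two $SU(2)$-invariant metrics that agree on the fiber over $M\times\{o\}$ agree everywhere by transitivity. Hence $\widetilde{h}_k=p^*h_k\otimes q^*h^\prime_k$.
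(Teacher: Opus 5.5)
The paper gives no proof of this proposition and simply cites Garc\'{i}a-Prada. Your argument is correct and is essentially the standard proof from that source: restrict $F$ to $M\times\{o\}$, split it into weight subbundles $E_k$ for the isotropy group $U(1)$, recover $F\cong\bigoplus_k SU(2)\times_{U(1)}E_k\cong\bigoplus_k p^*E_k\otimes q^*O(n_k)$ by induction, and deduce (2) and (3) from the $SU(2)$-invariance of $h$ and transitivity on $\mathbb{CP}^1$. One small correction: the weight projections have constant rank because the trace of a continuous family of idempotents is a continuous integer-valued function on the connected manifold $M$, not because $U(1)$ is connected.
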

Let $\left(F,h\right)$ be an $SU\left(2\right)$-equivalent Hermitian vector bundle over $M\times\mathbb{CP}^1$ and we consider the decomposition
\begin{equation}\label{eq:decomposition}
  F=\bigoplus_{k=1}^mF_k=\bigoplus_{k=1}^mp^*E_k\otimes q^*O\left(n_k\right)
\end{equation}
given in \eqref{SU(2)-equiv..vec.bdl_decomposition}. From theorem \ref{thm:SU(2)-decomposition} $\left(2\right)$, we have a decomposition
\[h=\bigoplus_{k=1}^m\widetilde{h}_k,\]
where $\widetilde{h}_k$ are $SU\left(2\right)$-invariant Hermitian metrics on $F_k$. Let $\nabla_F$ be a metric connection on $F$ with respect to $h$. Since we have the decompositions
\[\Omega^p\left(F\right)=\bigoplus_{k=1}^m\Omega^p\left(F_k\right),\quad p\in\mathbb{Z}_{\geq 0},\]
a connection $\nabla^F$ is decomposed as
\begin{equation}
  \nabla^F=\begin{pmatrix}
    \beta_{11}&\cdots&\beta_{1m}\\
    \vdots&\ddots&\vdots\\
    \beta_{m1}&\cdots&\beta_{mm}
  \end{pmatrix},
\end{equation}
where for $i,j\in\left\{1,\hdots,m\right\},\beta_{ij}$ is a $\mathbb{C}$-linear map from $\Omega^0\left(F_j\right)$ to $\Omega^1\left(F_i\right)$.
\begin{proposition}\cite{Garcia-Prada}\label{prop:connection_decomposition}
  \textit{\begin{enumerate}
    \item For $i\in\left\{1,\hdots,m\right\},\beta_{ii}$ is a metric connection on $F_i$ with respect to $\widetilde{h}_i$.
    \item For $i,j\in\left\{1,\hdots,m\right\}$ with $i\ne j, \beta_{ij}\in\Omega^1\bigl(\textrm{Hom}\left(F_j,F_i\right)\bigr)$ holds. Moreover, $\beta_{ji}$ is an adjoint of $-\beta_{ij}$ with respect to $\widetilde{h}_i$ and $\widetilde{h}_j$.
  \end{enumerate}}
\end{proposition}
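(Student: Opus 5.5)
The plan is to split $\nabla^F$ into blocks along the orthogonal decomposition \eqref{eq:decomposition}, and then read off both claims from the metric compatibility of $\nabla^F$ with $h=\bigoplus_k\widetilde{h}_k$. Let $\iota_j:F_j\to F$ be the inclusions and $\pi_i:F\to F_i$ the orthogonal projections. These are smooth bundle maps, and by Proposition \ref{thm:SU(2)-decomposition} $(2)$ the projections are orthogonal. With this notation
\[\beta_{ij}=\pi_i\circ\nabla^F\circ\iota_j:\Omega^0\left(F_j\right)\to\Omega^1\left(F_i\right).\]

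First I check how each block behaves under multiplication by $f\in C^\infty\left(M\times\mathbb{CP}^1\right)$. For $s\in\Omega^0\left(F_j\right)$ the Leibniz rule for $\nabla^F$ gives
\[\nabla^F\left(fs\right)=f\nabla^Fs+df\otimes s.\]
Since $\pi_i$ is $C^\infty$-linear, $\pi_i\left(df\otimes s\right)=df\otimes\pi_i s$. This equals $df\otimes s$ when $i=j$ and vanishes when $i\neq j$.

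\begin{itemize}
\item For $i=j$, $\beta_{ii}$ satisfies the Leibniz rule, so it is a connection on $F_i$.
\item For $i\neq j$, $\beta_{ij}$ is $C^\infty$-linear. By the standard tensoriality lemma it is therefore given by an element of $\Omega^1\bigl(\textrm{Hom}\left(F_j,F_i\right)\bigr)$.
\end{itemize}

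Next I use metric compatibility. Take $s_i\in\Omega^0\left(F_i\right)$ and $t_j\in\Omega^0\left(F_j\right)$, and extend them by zero to sections of $F$. Then
\[d\,h\left(s,t\right)=h\left(\nabla^F s,t\right)+h\left(s,\nabla^F t\right).\]
Because the $F_k$ are mutually orthogonal, the pairing $h\left(\nabla^F s_i,t_j\right)$ only sees the $F_j$-component of $\nabla^F s_i$, namely $\beta_{ji}s_i$.

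\begin{itemize}
\item Taking $i=j$ and $s,t\in\Omega^0\left(F_i\right)$ gives
\[d\,\widetilde{h}_i\left(s,t\right)=\widetilde{h}_i\left(\beta_{ii}s,t\right)+\widetilde{h}_i\left(s,\beta_{ii}t\right).\]
So $\beta_{ii}$ is $\widetilde{h}_i$-unitary, which proves $(1)$.
\item Taking $i\neq j$, the left-hand side vanishes because $h\left(s_i,t_j\right)=0$. This leaves
\[0=\widetilde{h}_j\left(\beta_{ji}s_i,t_j\right)+\widetilde{h}_i\left(s_i,\beta_{ij}t_j\right),\]
that is, $\beta_{ji}=-\beta_{ij}^*$ with respect to $\widetilde{h}_i$ and $\widetilde{h}_j$. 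This proves $(2)$.
\end{itemize}

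Two conventions need a little care, but neither is a real obstacle. The adjoint of a $1$-form-valued homomorphism is taken componentwise on real $1$-forms. The sesquilinear conventions for $h$ must be applied consistently. The argument never uses $SU\left(2\right)$-invariance of $\nabla^F$: only the orthogonality in Proposition \ref{thm:SU(2)-decomposition} enters.
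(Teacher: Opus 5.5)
Your proof is correct. The blockwise Leibniz computation shows that $\beta_{ii}$ is a connection and that $\beta_{ij}$ ($i\neq j$) is tensorial, and evaluating $d\,h(s,t)$ on each real tangent vector, with $s,t$ taken in orthogonal summands, yields both the metric property of $\beta_{ii}$ and $\beta_{ji}=-\beta_{ij}^*$. The paper gives no proof of its own here (it only cites Garc\'{i}a-Prada), and yours is the standard verification. Your remark that only the orthogonality of the $F_k$ is used, not $SU(2)$-invariance, matches the paper's setup, where $\nabla^F$ is an arbitrary $h$-metric connection.
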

Similarly, for a $\Theta^F\in\Omega^1\bigl(\textrm{Herm}\left(F,h\right)\bigr)$, we decompose it as
\begin{equation}\label{Theta_decomposition}
  \Theta^F=\begin{pmatrix}
    \Theta_{11}&\cdots&\Theta_{1m}\\
    \vdots&\ddots&\vdots\\
    \Theta_{m1}&\cdots&\Theta_{mm}
  \end{pmatrix},
\end{equation}
where for $i,j\in\left\{1,\hdots,m\right\},\Theta_{ij}\in\Omega^1\bigl(\textrm{Hom}\left(F_j,F_i\right)\bigr)$. Then by a direct computation, we can check that for $i,j\in\left\{1,\hdots,m\right\},\Theta_{ji}$ is an adjoint of $\Theta_{ij}$ with respect to $\widetilde{h}_i$ and $\widetilde{h}_j$.
\subsection{Equivalent complex vector bundle}
Let $M$ be a complex manifold, $E$ be a smooth complex vector bundle over $M$ and $G$ be a compact Lie group holomorphically acting on $M$. $\textrm{Aut}_h\left(M\right), GL\left(E\right)$ and $\mathcal{G}\left(E\right)$ denote the group of biholomorphic map of $M$, the group of bundle automorphisms of $E$ and the gauge group of $E$, respectively. Let $\textrm{Aut}_h\left(E\right)$ be the preimage of $\textrm{Aut}_h\left(M\right)\subset\textrm{Diff}\left(M\right)$ under $\pi:GL\left(E\right)\to\textrm{Diff}\left(M\right)$. Then we have an exact sequence
\[\xymatrix{\left\{\textrm{id}_E\right\}\ar[r]&\mathcal{G}\left(E\right)\ar[r]&\textrm{Aut}_h\left(E\right)\ar[r]^-{\pi}&\textrm{Aut}_h\left(M\right).}\]
Let $\mathscr{H}^c$ be the preimage of $G \subset\textrm{Diff}\left(M\right)$ under $\pi|_{\textrm{Aut}_h\left(E\right)}$.
\begin{definition}\cite{Garcia-Prada}
  Given a homeomorphism $\varphi:G\to\mathscr{H}^c$, a smooth complex vector bundle $E$ is called a \textit{$G$-equivalent vector bundle} with respect to $\varphi$ if the sequence
  \[\xymatrix{\left\{\textrm{id}_E\right\}\ar[r]&\mathcal{G}\left(E\right)\ar[r]&\mathscr{H}^c\ar[r]&G\ar[r]&\left\{\textrm{id}_M\right\}}\]
  is exact and split and the splitting is compatible with respect to $\varphi$.
\end{definition}
Let $E$ be a $G$-equivalent vector bundle and $\mathscr{H}^{\prime\prime}\left(E\right)$ denotes the space of holomorphic structures in $E$. The action of $G$ on $E$ induces the right-action on $\mathscr{H}^{\prime\prime}\left(E\right)$ and $\mathcal{G}\left(E\right)$. These are given by, for $\gamma\in G,\bar{\partial}^E\in\mathscr{H}^{\prime\prime}\left(E\right),g\in\mathcal{G}\left(E\right)$,
\begin{empheq}[left=\empheqlbrace]{align}
  &\left(\bar{\partial}^E\cdot\gamma\right)_X\left(s\right):=\gamma^{-1}\cdot\bar{\partial}^E_{d\gamma\left(X\right)}\left(\gamma\cdot s\right),\quad s\in\Omega^0\left(E\right),X\in\mathfrak{X}^{0,1}\left(M\right),\\
  &\left(g\cdot\gamma\right)\left(s\right):=\gamma^{-1}\cdot g\left(\gamma\cdot s\right),\quad s\in\Omega^0\left(E\right).
\end{empheq}
\section{Doubly-Coupled Vortex Equations}
Let $M$ be a compact K\"{a}hler manifold and $\left(E_1,h_1\right),\left(E_2,h_2\right)$ be smooth Hermitian vector bundles over $M$. We consider a smooth Hermitian vector bundle
\[F:=p^*E_1\oplus p^*E_2\otimes q^*O\left(2\right), \quad h:=p^*h_1\oplus p^*h_2\otimes q^*h^\prime\]
over $M\times\mathbb{P}^1$, where $h^\prime$ is a standard Hermitian metric on $O\left(2\right)$. $\left(F,h\right)$ is an $SU\left(2\right)$-equivalent Hermitian vector bundle and $F$ is an $SU\left(2\right)$-equivalent vector bundle since  $SU\left(2\right)$ acts on $\mathbb{CP}^1$ from the right holomorphically.

In this section, we derive the doubly-coupled $\tau$-vortex equations from the $SU\left(2\right)$-invariant HYM equations on $F$ via dimensional reduction. This argument was developed by Ono \cite{Ono} for the case where $M$ is a compact Riemann surface. Since in his argument, the assumption that $M$ is a compact Riemann surface is not essentially used, we provide only a brief explanation and refer the reader to \cite{Ono} for further details. Moreover, we introduce a Yang-Mills-Higgs type functional whose minima corresponds to the solution to the doubly-coupled $\tau$-vortex equations.
\subsection{Invariant connections and correspondence}
Let $\bar{\partial}^F$ be an $SU\left(2\right)$-invariant holomorphic structure on $F$ and $\nabla^F$ be the Chern connection on $\left(F,h\right)$. By Proposition \ref{prop:connection_decomposition}, $\nabla^F$ is decomposed as
\begin{gather}
  \nabla^F=\begin{pmatrix}
    \nabla^{F_1}&\beta_{12}\\
    -\beta_{12}^*&\nabla^{F_2}
  \end{pmatrix},\\
  \nabla^{F_1}\in\mathscr{A}\left(F_1,p^*h_1\right),\nabla^{F_2}\in\mathscr{A}\left(F_2,p^*h_2\otimes q^*h^\prime\right),\beta_{12}\in\Omega^1\bigl(\textrm{Hom}\left(F_2,F_1\right)\bigr).
\end{gather}
Since $\nabla^F$ is an $SU\left(2\right)$-invariant metric connection on $\left(F,h\right)$, the following proposition holds.
\begin{proposition}\cite{Garcia-Prada}\label{prop:F_decomposition}
  \textit{$\bar{\partial}^\prime$ denotes the standard holomorphic structure in $O\left(2\right)$ and $\nabla^\prime$ denotes the Chern connection on $\left(O\left(2\right),\bar{\partial}^\prime,h^\prime\right)$.}
  
  \textit{\begin{enumerate}
    \item There exist metric connections $\nabla^{E_1}$ on $\left(E_1,h_1\right)$ and $\nabla^{E_2}$ on $\left(E_2,h_2\right)$ such that
    \begin{equation}
      \nabla^{F_1}=p^*\nabla^{E_1},\quad \nabla^{F_2}=p^*\nabla^{E_2}\otimes\textrm{id}_{q^*O\left(2\right)}+\textrm{id}_{p^*E_2}\otimes q^*\nabla^\prime
    \end{equation}
    holds.
    \item There exist a section $\phi\in\Omega^0\bigl(\textrm{Hom}\left(E_2,E_1\right)\bigr)$ and a $1$-form $\alpha\in\Omega^1\bigl(O\left(-2\right)\bigr)^{SU\left(2\right)}$ such that
    \[\beta_{12}=p^*\phi\otimes q^*\alpha\]
    holds. $\alpha$ is a generator of a vector space $\Omega^1\bigl(O\left(-2\right)\bigr)^{SU\left(2\right)}\simeq\mathbb{C}$ so it is unique up to constant multiplicity.
  \end{enumerate}}
\end{proposition}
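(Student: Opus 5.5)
The plan is to use $SU\left(2\right)$-invariance together with the fact that $SU\left(2\right)$ acts trivially on $M$ and transitively on $\mathbb{CP}^1={SU\left(2\right)}/{U\left(1\right)}$. This reduces both statements to representation theory of the isotropy group $U\left(1\right)$ at a base point $x_0\in\mathbb{CP}^1$.

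\textbf{Part (1).} The diagonal block $\nabla^{F_1}$ is an $SU\left(2\right)$-invariant metric connection on $F_1=p^*E_1$, where $SU\left(2\right)$ acts trivially on the fibres of $p^*E_1$. Fix $x\in\mathbb{CP}^1$ and let $\iota_x:M\to M\times\mathbb{CP}^1$ be the slice inclusion. Set $\nabla^{E_1}:=\iota_x^*\nabla^{F_1}$, a metric connection on $\left(E_1,h_1\right)$; invariance shows that it does not depend on $x$.

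Write $\nabla^{F_1}=p^*\nabla^{E_1}+a$ with $a\in\Omega^1\bigl(\textrm{End}\left(p^*E_1\right)\bigr)$. Its $M$-directional part vanishes by construction. Its $\mathbb{CP}^1$-directional part, restricted to each fibre $\left\{m\right\}\times\mathbb{CP}^1$, is an invariant $1$-form on $\mathbb{CP}^1$ with values in the trivial representation $\textrm{End}\left(E_{1,m}\right)$. Such forms are determined by $U\left(1\right)$-invariant vectors in $T^*_{x_0}\mathbb{CP}^1\otimes\mathbb{C}\cong\mathbb{C}_{2}\oplus\mathbb{C}_{-2}$ (weights $\pm2$), tensored with the trivial representation. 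There are none, so $a=0$.

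For $F_2=p^*E_2\otimes q^*O\left(2\right)$, compare $\nabla^{F_2}$ with the invariant reference connection $p^*\nabla\otimes\textrm{id}+\textrm{id}\otimes q^*\nabla^\prime$, where $\nabla$ is any metric connection on $E_2$. The difference lies in $\Omega^1\bigl(\textrm{End}\left(p^*E_2\right)\bigr)$, since $\textrm{End}\left(q^*O\left(2\right)\right)$ is trivial. The same argument then produces $\nabla^{E_2}$ with zero $\mathbb{CP}^1$-directional remainder.

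\textbf{Part (2).} Here $\beta_{12}$ is an invariant section of $\Lambda^1\otimes\textrm{Hom}\left(F_2,F_1\right)=\Lambda^1\otimes p^*\textrm{Hom}\left(E_2,E_1\right)\otimes q^*O\left(-2\right)$. Decompose $\Lambda^1=p^*\Lambda^1_M\oplus q^*\Lambda^1_{\mathbb{CP}^1}$.

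\emph{Step 1 ($M$-directional part).} This part is an invariant section of $p^*\bigl(\Lambda^1_M\otimes\textrm{Hom}\bigr)\otimes q^*O\left(-2\right)$. By Frobenius reciprocity, invariant sections correspond to $U\left(1\right)$-fixed vectors in the fibre, which carries weight $-2$ for $O\left(-2\right)$. The fixed vectors are zero, so this part vanishes.

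\emph{Step 2 ($\mathbb{CP}^1$-directional part).} Invariant sections of $q^*\bigl(\Lambda^1_{\mathbb{CP}^1}\otimes O\left(-2\right)\bigr)\otimes p^*\textrm{Hom}\left(E_2,E_1\right)$ correspond pointwise on $M$ to $\textrm{Hom}\left(E_{2,m},E_{1,m}\right)\otimes\bigl(\Lambda^1_{x_0}\otimes O\left(-2\right)_{x_0}\bigr)^{U\left(1\right)}$. The weights of $\Lambda^1_{x_0}\otimes O\left(-2\right)_{x_0}$ are $\pm2-2$, so exactly one weight-zero line appears. Depending on the sign convention, this is the $\left(1,0\right)$-part, i.e.\ $K_{\mathbb{CP}^1}\otimes O\left(2\right)^*\otimes O\left(2\right)$ type pieces. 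Hence $\Omega^1\bigl(O\left(-2\right)\bigr)^{SU\left(2\right)}\simeq\mathbb{C}$, spanned by some $\alpha$, and $\beta_{12}=p^*\phi\otimes q^*\alpha$ with $\phi$ smooth on $M$. Uniqueness up to scalar is this one-dimensionality.

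\textbf{Main obstacle.} The hard step is pinning down the weights correctly: the $U\left(1\right)$-weights of $O\left(\pm2\right)_{x_0}$ and of $T^{*1,0},T^{*0,1}$, consistent with the chosen linearization (right action on ${SU\left(2\right)}/{U\left(1\right)}$). This must be done so that exactly one weight cancels. I would fix conventions via $T^{1,0}\mathbb{CP}^1\cong O\left(2\right)$ as homogeneous bundles, which forces $T^{*1,0}\otimes O\left(2\right)$ to be trivial and $T^{*0,1}\otimes O\left(2\right)$ to have weight $4$.
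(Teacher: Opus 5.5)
Your argument is correct and is essentially the standard one. The paper gives no proof of its own and cites Garc\'{i}a-Prada, whose argument is the same reduction to invariant sections of homogeneous bundles over $SU(2)/U(1)$, i.e.\ to $U(1)$-fixed vectors in the isotropy representation. Your slice argument for Part (1) and your weight count $\{0,\mp 4\}$ for $\Lambda^1_{x_0}\otimes O(-2)_{x_0}$ in Part (2) establish everything the statement asserts.

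One slip should be corrected. The surviving weight-zero line is the $(0,1)$-part, not the $(1,0)$-part, and this does not depend on any sign convention. Indeed $\Lambda^{1,0}\otimes O(-2)=K_{\mathbb{CP}^1}\otimes O(-2)\cong O(-4)$, whereas $\Lambda^{0,1}\otimes O(-2)\cong\overline{K_{\mathbb{CP}^1}}\otimes O(-2)\cong O(2)\otimes O(-2)$ is equivariantly trivial; here the invariant metric identifies $\overline{L}$ with $L^{*}$. Your check in the last paragraph, that $T^{*1,0}\otimes O(2)$ is trivial, concerns $O(2)$, i.e.\ the block $\beta_{21}=-\beta_{12}^*$ (the paper's $\beta=\alpha^*$), not $\beta_{12}$.

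This does not affect the dimension count. It does matter downstream, though: the paper uses $\alpha^{1,0}=0$ immediately afterwards to conclude that $\bar{\partial}^F$ is upper triangular.
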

The derivative of the projection $p:M\times\mathbb{CP}^1\to M$ is surjective. Thus by Proposition \ref{prop:F_decomposition}, $\nabla^{E_1},\nabla^{E_2}$ and $\phi$ are unique when we fix an element $\alpha\in\Omega^1\bigl(O\left(-2\right)\bigr)^{SU\left(2\right)}$.

Since $\bar{\partial}^F$ is the $\left(0,1\right)$-part of the Chern connection $\nabla^F$, by Proposition \ref{prop:F_decomposition} we can write $\bar{\partial}^F$ as
\[\bar{\partial}^F=\begin{pmatrix}
  p^*\bar{\partial}^{E_1}&p^*\phi\otimes q^*\alpha^{0,1}\\
  -\left(p^*\phi\otimes q^*\alpha^{1,0}\right)^*&p^*\bar{\partial}^{E_2}\otimes\textrm{id}_{q^*O\left(2\right)}+\textrm{id}_{p^*E_2}\otimes q^*\bar{\partial}^\prime
\end{pmatrix},\]
where $\bar{\partial}^{E_1}$ and $\bar{\partial}^{E_2}$ are the $\left(0,1\right)$-part of the connections $\nabla^{E_1}$ and $\nabla^{E_2}$, respectively. Since $\alpha^{1,0}\in\Omega^{1,0}\bigl(O\left(-2\right)\bigr)^{SU\left(2\right)}=\left\{0\right\}$, $\left(\beta_{12}^*\right)^{0,1}$ vanishes. Therefore $\bar{\partial}^F$ is written as
\[\bar{\partial}^F=\begin{pmatrix}
  p^*\bar{\partial}^{E_1}&p^*\phi\otimes q^*\alpha^{0,1}\\
  0&p^*\bar{\partial}^{E_2}\otimes\textrm{id}_{q^*O\left(2\right)}+\textrm{id}_{p^*E_2}\otimes q^*\bar{\partial}^\prime
\end{pmatrix}.\]
Now, $\bar{\partial}^F$ is a holomorphic structure in $F$, that is, $\bar{\partial}^F\circ\bar{\partial}^F=0$ holds. Thus
\begin{align}
  0=\bar{\partial}^F\circ\bar{\partial}^F=&\begin{pmatrix}
    p^*\left(\bar{\partial}^{E_1}\circ\bar{\partial}^{E_1}\right)&p^*\bar{\partial}^{\textrm{Hom}\left(E_2,E_1\right)}\phi\otimes q^*\alpha^{0,1}+p^*\phi\otimes q^*\bar{\partial}^\prime\alpha^{0,1}\\
    0&p^*\left(\bar{\partial}^{E_2}\circ\partial^{E_2}\right)\otimes\textrm{id}_{q^*O\left(2\right)}+\textrm{id}_{p^*E_2}\otimes q^*\left(\bar{\partial}^\prime\circ\bar{\partial}^\prime\right)
  \end{pmatrix}\\
  =&\begin{pmatrix}
    p^*\left(\bar{\partial}^{E_1}\circ\bar{\partial}^{E_1}\right)&p^*\bar{\partial}^{\textrm{Hom}\left(E_2,E_1\right)}\phi\otimes q^*\alpha^{0,1}\\
    0&p^*\left(\bar{\partial}^{E_2}\circ\partial^{E_2}\right)\otimes\textrm{id}_{q^*O\left(2\right)}
  \end{pmatrix}
\end{align}
holds. Since $\alpha$ is type $\left(0,1\right)$ and nowhere-vanishing, $\bar{\partial}^{E_1}$ and $\bar{\partial}^{E_2}$ define the holomorphic structure in $E_1$ and $E_2$, respectively and $\bar{\partial}^{\textrm{Hom}\left(E_2,E_1\right)}\phi=0$ holds.

Let $\theta^F$ be an $SU\left(2\right)$-invariant Higgs field on $\left(F,\bar{\partial}^F\right)$ and $\Theta^F$ denotes $\theta^F+\theta_h^F$. By \eqref{Theta_decomposition}, $\Theta^F$ is decomposed as
\begin{gather}
  \Theta^F=\begin{pmatrix}
    \Theta^{F_1}&\Theta_{12}\\
    \Theta_{12}^*&\Theta^{F_2}
  \end{pmatrix},\\
  \Theta^{F_1}\in\Omega^1\bigl(\textrm{Herm}\left(F_1,p^*h_1\right)\bigr),\Theta^{F_2}\in\Omega^1\bigl(\textrm{Herm}\left(F_2,p^*h_2\otimes q^*h^\prime\right)\bigr),\\
  \Theta_{12}\in\Omega^1\bigl(\textrm{Hom}\left(F_2,F_1\right)\bigr).
\end{gather}
By a similar argument for the metric connection $\nabla^F$, we can show the following proposition.
\begin{proposition}\label{prop:Theta_F_decomposition}
  \textit{\begin{enumerate}
    \item There exist $\Theta^{E_1}\in\Omega^1\bigl(\textrm{Herm}\left(E_1,h_1\right)\bigr)$ and $\Theta^{E_2}\in\Omega^1\bigl(\textrm{Herm}\left(E_2,h_2\right)\bigr)$ such that
    \begin{equation}
      \Theta^{F_1}=p^*\Theta^{E_1},\quad \Theta^{F_2}=p^*\Theta^{E_2}\otimes\textrm{id}_{q^*O\left(2\right)}
    \end{equation}
    holds.
    \item There exist a section $\phi\in\Omega^0\bigl(\textrm{Hom}\left(E_2,E_1\right)\bigr)$ and a $1$-form $\alpha\in\Omega^1\bigl(O\left(-2\right)\bigr)^{SU\left(2\right)}$ such that
    \[\Theta_{12}=p^*\phi\otimes q^*\alpha\]
    holds. $\alpha$ is a generator of a vector space $\Omega^1\bigl(O\left(-2\right)\bigr)^{SU\left(2\right)}\simeq\mathbb{C}$ so it is unique up to constant multiplicity.
  \end{enumerate}}
\end{proposition}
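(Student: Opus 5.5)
Following the argument behind Proposition \ref{prop:F_decomposition}, I would use that $SU\left(2\right)$ acts trivially on $M$ and transitively on $\mathbb{CP}^1={SU\left(2\right)}/{U\left(1\right)}$. This reduces invariant objects on $F$ to $U\left(1\right)$-equivariant linear algebra at the single point $o=\left[U\left(1\right)\right]\in\mathbb{CP}^1$, fibrewise over $M$. First, the action preserves the decomposition $F=F_1\oplus F_2$ and the metric $h$ (Proposition \ref{thm:SU(2)-decomposition}). Hence invariance of $\Theta^F$ means each block $\Theta^{F_1},\Theta^{F_2},\Theta_{12}$ is an $SU\left(2\right)$-invariant section of the corresponding bundle $\Omega^1\bigl(\textrm{Hom}\left(F_j,F_i\right)\bigr)$, and the Hermitian property gives $\Theta_{21}=\Theta_{12}^*$ as already noted after \eqref{Theta_decomposition}.

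Next, I would split $T^*\left(M\times\mathbb{CP}^1\right)=p^*T^*M\oplus q^*T^*\mathbb{CP}^1$ and write each block as a sum of an $M$-directional part and a $\mathbb{CP}^1$-directional part. For the diagonal blocks, $\textrm{Hom}\left(F_i,F_i\right)=p^*\textrm{End}\left(E_i\right)$, because the factor $q^*O\left(n_i\right)$ cancels. So the $M$-part is an invariant section of $p^*\bigl(T^*M\otimes\textrm{End}\left(E_i\right)\bigr)$ over $M\times\mathbb{CP}^1$. Such a section is constant along the homogeneous fibre $\mathbb{CP}^1$, so it is the pullback $p^*\Theta^{E_i}$. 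The Hermitian condition for $\Theta^{E_i}$ follows because $p^*h_i$ (resp.\ $p^*h_2\otimes q^*h^\prime$) restricts fibrewise to $h_i$ up to a positive factor. The $\mathbb{CP}^1$-part of a diagonal block is an invariant section of $p^*\textrm{End}\left(E_i\right)\otimes q^*T^*\mathbb{CP}^1$. At $o$ its value must lie in the $U\left(1\right)$-invariants of $T_o^*\mathbb{CP}^1$. The isotropy acts on $T_o^*\mathbb{CP}^1\simeq\mathbb{C}$ with nonzero weight, so the invariants are zero and this part vanishes; equivalently, $\Omega^1\left(\mathbb{CP}^1\right)^{SU\left(2\right)}=0$. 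This gives (1).

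For the off-diagonal block, $\textrm{Hom}\left(F_2,F_1\right)=p^*\textrm{Hom}\left(E_2,E_1\right)\otimes q^*O\left(-2\right)$. Its $M$-directional part lies in $p^*\bigl(T^*M\otimes\textrm{Hom}\left(E_2,E_1\right)\bigr)\otimes q^*O\left(-2\right)$, and invariance forces it to come from $\Omega^0\bigl(O\left(-2\right)\bigr)^{SU\left(2\right)}$. That space is zero, since $U\left(1\right)$ acts on the fibre $O\left(-2\right)_o$ with nonzero weight. The $\mathbb{CP}^1$-part lies in $p^*\textrm{Hom}\left(E_2,E_1\right)\otimes q^*\bigl(T^*\mathbb{CP}^1\otimes O\left(-2\right)\bigr)$. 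Invariant sections correspond fibrewise to $\textrm{Hom}\left(E_2,E_1\right)\otimes\Omega^1\bigl(O\left(-2\right)\bigr)^{SU\left(2\right)}$, and $\Omega^1\bigl(O\left(-2\right)\bigr)^{SU\left(2\right)}\simeq\mathbb{C}$ as stated in Proposition \ref{prop:F_decomposition}. Fixing a generator $\alpha$ then gives $\Theta_{12}=p^*\phi\otimes q^*\alpha$ with $\phi$ uniquely determined. Smoothness of $\phi$ in the $M$ variable follows by evaluating at $o$ and pairing with a fixed nonzero covector.

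The main obstacle is the weight computation identifying these invariant spaces. One checks that the isotropy $U\left(1\right)$ acts on $T^{1,0}_o\mathbb{CP}^1$ with weight $2$ and on $O\left(-2\right)_o$ with weight $-2$ (up to sign conventions). Then $\left(T_o^{*}\right)^{0,1}\otimes O\left(-2\right)_o$ has weight $0$, while $\left(T_o^{*}\right)^{1,0}\otimes O\left(-2\right)_o$ and $O\left(-2\right)_o$ itself have nonzero weight. Here I would invoke the same computation already used for Proposition \ref{prop:F_decomposition}. It also shows $\alpha$ is of type $\left(0,1\right)$. Unlike the connection case, $\Theta^F$ is tensorial, so no inhomogeneous term such as $\textrm{id}\otimes q^*\nabla^\prime$ appears in $\Theta^{F_2}$, which explains the form of (1).
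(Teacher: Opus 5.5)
Your proposal is correct and follows the route the paper intends. The paper gives no separate proof and only says the result follows ``by a similar argument'' to the connection case, that is, Garc\'{i}a-Prada's reduction of $SU\left(2\right)$-invariant objects to $U\left(1\right)$-isotropy weights at a point of $\mathbb{CP}^1$. You carry that argument out correctly, including the point that, since $\Theta^F$ is tensorial, no $\textrm{id}_{p^*E_2}\otimes q^*\nabla^\prime$ term appears in $\Theta^{F_2}$. One minor slip: to recover $\phi$ you should evaluate on a tangent vector $v\in T_o\mathbb{CP}^1$ with $\alpha_o\left(v\right)\ne 0$, not pair with a covector.
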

As in the case of the Chern connection $\nabla^F$, $\Theta^{E_1},\Theta^{E_2}$ and $\phi$ are unique when we fix an element $\alpha\in\Omega^1\bigl(O\left(-2\right)\bigr)^{SU\left(2\right)}$.

Since $\theta^F$ is the $\left(1,0\right)$-part of the $1$-form $\Theta^F$, by Proposition \ref{prop:Theta_F_decomposition} we can write $\theta^F$ as
\[\theta^F=\begin{pmatrix}
  p^*\theta^{E_1}&0\\
  p^*\phi^*\otimes q^*\left(\alpha^{0,1}\right)^*&p^*\theta^{E_2}\otimes\textrm{id}_{q^*O\left(2\right)}
\end{pmatrix},\]
where $\theta^{E_1}$ and $\partial^{E_2}$ are the $\left(1,0\right)$-part of $1$-forms $\Theta^{E_1}$ and $\Theta^{E_2}$, respectively. Since $\alpha$ is type $\left(0,1\right)$ and $SU\left(2\right)$-invariant, $\left(\alpha^{0,1}\right)^*=\alpha^*\in\Omega^1\bigl(O\left(2\right)\bigr)$ is also $SU\left(2\right)$-invariant. We set $\psi=\phi^*$ and $\beta=\alpha^*$. Now, $\theta^F$ is a Higgs field in $F$, that is, $\bar{\partial}^F$ and $\theta^F$ satisfy
\begin{empheq}[left=\empheqlbrace]{align}
  &\theta^F\wedge\theta^F=0 \label{eq:integrable},\\
  &\bar{\partial}^{\textrm{End}\left(F\right)}\theta^F=0 \label{eq:holomorphic}.
\end{empheq}
First, from \eqref{eq:integrable},
\begin{align}
  0=\theta^F\wedge\theta^F=&\begin{pmatrix}
    p^*\left(\theta^{E_1}\wedge\theta^{E_1}\right)&0\\
    p^*\theta^{\textrm{Hom}\left(E_1,E_2\right)}\psi\otimes q^*\beta^{1,0}&p^*\left(\theta^{E_2}\wedge\theta^{E_2}\right)\otimes\textrm{id}_{q^*O\left(2\right)}
  \end{pmatrix}
\end{align}
holds. Since $\beta^{1,0}=\beta$ is nowhere-vanishing,
\[\theta^{E_1}\wedge\theta^{E_1}=0,\quad\theta^{E_2}\wedge\theta^{E_2}=0\ \ \textrm{and}\ \ \theta^{\textrm{Hom}\left(E_1,E_2\right)}\psi=0\]
hold. Next, from \eqref{eq:holomorphic},
\begin{align}
  &\bar{\partial}^{\textrm{End}\left(F\right)}\theta^F\\
  =&\bar{\partial}^F\circ\theta^F+\theta^F\circ\bar{\partial}^F\\
  =&\begin{pmatrix}
    p^*\left(\bar{\partial}^{E_1}\circ\theta^{E_1}\right)+p^*\left(\phi\circ\psi\right)\otimes q^*\left(\alpha\wedge\beta\right)&-p^*\left(\phi\circ\theta^{E_2}\right)\otimes q^*\alpha\\
    p^*\left(\bar{\partial}^{E_2}\circ\psi\right)\otimes q^*\beta+p^*\phi\otimes q^*\bar{\partial}^\prime\beta&p^*\left(\bar{\partial}^{E_2}\circ\theta^{E_2}\right)\otimes\textrm{id}_{q^*O\left(2\right)}-p^*\theta^{E_2}\otimes q^*\bar{\partial}^\prime
  \end{pmatrix}\\
  &{\footnotesize+\begin{pmatrix}
    p^*\left(\theta^{E_1}\circ\bar{\partial}^{E_1}\right)&p^*\left(\theta^{E_1}\circ\phi\right)\otimes q^*\alpha\\
    -p^*\left(\psi\circ\bar{\partial}^{E_1}\right)\otimes q^*\beta&p^*\left(\psi\circ\phi\right)\otimes q^*\left(\beta\wedge\alpha\right)+p^*\left(\theta^{E_2}\circ\bar{\partial}^{E_2}\right)\otimes\textrm{id}_{q^*O\left(2\right)}+p^*\theta^{E_2}\otimes q^*\bar{\partial}^\prime
  \end{pmatrix}}\\
  =&{\scriptsize\begin{pmatrix}
    p^*\left(\bar{\partial}^{\textrm{End}\left(E_1\right)}\theta^{E_1}\right)+p^*\left(\phi\circ\psi\right)\otimes q^*\left(\alpha\wedge\beta\right)&p^*\left(\theta^{\textrm{Hom}\left(E_2,E_1\right)}\phi\right)\otimes q^*\alpha\\
    p^*\left(\bar{\partial}^{\textrm{Hom}\left(E_1,E_2\right)}\psi\right)\otimes q^*\beta&p^*\left(\psi\circ\phi\right)\otimes q^*\left(\beta\wedge\alpha\right)+p^*\left(\bar{\partial}^{\textrm{End}\left(E_2\right)}\theta^{E_2}\right)\otimes\textrm{id}_{q^*O\left(2\right)}
  \end{pmatrix}}
\end{align}
holds since $\beta\in\Omega^1\bigl(O\left(2\right)\bigr)$ is holomorphic. This implies that $\theta^{E_1}$ and $\theta^{E_2}$ are holomorphic with respect to the holomorphic structures $\bar{\partial}^{E_1}$ and $\bar{\partial}^{E_2}$, respectively and $\phi$ and $\psi$ satisfy
\begin{gather}
  \theta^{\textrm{Hom}\left(E_2,E_1\right)}\phi=0,\quad \theta^{\textrm{Hom}\left(E_1,E_2\right)}\psi=0,\\
  \phi\circ\psi=0,\quad \psi\circ\phi=0.
\end{gather}

We define subsets $\widetilde{\mathscr{N}}$ and $\mathscr{N}$ of $\mathscr{H}\left(E_1,h_1\right)\times\mathscr{H}\left(E_2,h_2\right)\times\Omega^0\bigl(\textrm{Hom}\left(E_2,E_1\right)\bigr)\times\Omega^0\bigl(\textrm{Hom}\left(E_1,E_2\right)\bigr)$ as
\[\widetilde{\mathscr{N}}:=\left\{\left(D_{E_1},D_{E_2},\phi,\psi\right)\middle|\phi\circ\psi=0,\psi\circ\phi=0\right\}\]
and
\[\mathscr{N}:=\left\{\left(D_{E_1},D_{E_2},\phi,\psi\right)\in\widetilde{\mathscr{N}}\middle|D_{\textrm{Hom}\left(E_2,E_1\right)}^{\prime\prime}\phi=0,D_{\textrm{Hom}\left(E_1,E_2\right)}^{\prime\prime}\psi=0\right\}.\]
Then we have a one-to-one correspondence between $\mathscr{H}\left(F,h\right)^{SU\left(2\right)}$ and $\mathscr{N}$.
\subsection{Dimensional reduction of the Hermitian-Yang-Mills equation}
We take $\tau\in\mathbb{R}$ so that a real number $\sigma$, defined below, is positive.
\[\sigma:=\frac{2\rank E_2}{\displaystyle\frac{\rank E_1+\rank E_2}{4\pi}\tau-\frac{\deg E_1+\deg E_2}{\textrm{Vol}\left(M,g\right)}}\]
Also, we define $\tau^\prime\in\mathbb{R}$ so that
\begin{equation}\label{eq:tau-prime}
  \tau\rank E_1+\tau^\prime\rank E_2=\frac{4\pi}{\textrm{Vol}\left(M,g\right)}\left(\deg E_1+\deg E_2\right)
\end{equation}
holds. Let $g_{FS}$ be the Fubini-Study metric on $\mathbb{CP}^1$ with $\textrm{Vol}\left(\mathbb{CP}^1,g_{FS}\right)=1$. Then
\[g_\sigma:=p^*g\oplus\sigma q^*g_{FS}\]
is an $SU\left(2\right)$-invariant K\"{a}hler metric on $M\times\mathbb{CP}^1$. $\omega_\sigma$ denotes the K\"{a}hler form of $g_\sigma$.
\begin{proposition}\label{prop:dimensional reduction}
  \textit{We choose $\alpha\in\Omega^1\bigl(O\left(-2\right)\bigr)^{SU\left(2\right)}$ and $\beta\in\Omega^1\bigl(O\left(2\right)\bigr)^{SU\left(2\right)}$ so that $\beta=\alpha^*$ and $\displaystyle\alpha\wedge\beta=\frac{\sqrt{-1}}{2}\sigma\omega_{FS}$ holds. For a quadruplet $\left(D_{E_1},D_{E_2},\phi,\psi\right)\in\mathscr{N}$ and a connection $D\in\mathscr{H}\left(F,h\right)^{SU\left(2\right)}$ corresponding to it, the following are equivalent.
  \begin{enumerate}
    \item $D$ is a Hermitian-Yang-Mills connection with respect to $g_\sigma$.
    \item $\left(D_{E_1},D_{E_2},\phi,\psi\right)$ satisfies the equations
    \begin{empheq}[left=\empheqlbrace]{align}
      \sqrt{-1}\Lambda_gR\left(D_{E_1}\right)+\frac{1}{2}\phi\circ\phi^*-\frac{1}{2}\psi^*\circ\psi=\frac{\tau}{2}\textrm{id}_{E_1}, \label{eq:dcv_equation_1}\\
      \sqrt{-1}\Lambda_gR\left(D_{E_2}\right)-\frac{1}{2}\phi^*\circ\phi+\frac{1}{2}\psi\circ\psi^*=\frac{\tau^\prime}{2}\textrm{id}_{E_2} \label{eq:dcv_equation_2}
    \end{empheq}
  \end{enumerate}}
\end{proposition}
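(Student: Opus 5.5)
Our approach is to compute $\sqrt{-1}\Lambda_{\sigma}R(D)$ for the $SU(2)$-invariant connection $D=\nabla^F+\Theta^F$ block by block, using the decompositions of Propositions \ref{prop:F_decomposition} and \ref{prop:Theta_F_decomposition}. Then we compare the result with the constant $c_\sigma\,\textrm{id}_F$, where $c_\sigma=2\pi\mu_\sigma(F)/\textrm{Vol}(M\times\mathbb{CP}^1,g_\sigma)$.

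In block form the HS connection is
\[D=\begin{pmatrix} p^*D_{E_1} & p^*\phi\otimes q^*\alpha+(p^*\psi\otimes q^*\beta)^*\\ -(p^*\phi\otimes q^*\alpha)^*+p^*\psi\otimes q^*\beta & p^*D_{E_2}\otimes\textrm{id}+\textrm{id}\otimes q^*\nabla^\prime\end{pmatrix}.\]
The curvature is $R(D)=D\circ D$, and each block splits into three kinds of terms:
\begin{itemize}
\item pullbacks of $R(D_{E_i})$ from $M$;
\item the curvature $R(\nabla^\prime)$ of $O(2)$ on the $(2,2)$ block;
\item quadratic terms in the off-diagonal parts, namely $\phi\phi^*\otimes\alpha\wedge\alpha^*$, $\psi^*\psi\otimes\beta^*\wedge\beta$ on the $(1,1)$ block and their analogues on the $(2,2)$ block.
\end{itemize}
The off-diagonal blocks involve $D''\phi$, $D''\psi$, $\theta\phi$ and so on, together with mixed forms $dx\wedge\alpha$. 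These are of mixed type on $M\times\mathbb{CP}^1$, so they are annihilated by $\Lambda_\sigma=\Lambda_g+\sigma^{-1}\Lambda_{FS}$. Cross terms such as $\phi\psi\otimes\alpha\wedge\beta$ are killed by the defining relations of $\mathscr{N}$.

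Next we contract the diagonal blocks. The normalisation $\alpha\wedge\beta=\frac{\sqrt{-1}}{2}\sigma\omega_{FS}$ together with $\beta=\alpha^*$ gives $\sqrt{-1}\sigma^{-1}\Lambda_{FS}(\alpha\wedge\alpha^*)=\pm\frac12$, with signs fixed by conjugation. The $(1,1)$ block then becomes
\[\sqrt{-1}\Lambda_gR(D_{E_1})+\tfrac12\phi\phi^*-\tfrac12\psi^*\psi.\]
The $(2,2)$ block becomes
\[\sqrt{-1}\Lambda_gR(D_{E_2})-\tfrac12\phi^*\phi+\tfrac12\psi\psi^*+\sqrt{-1}\sigma^{-1}\Lambda_{FS}R(\nabla^\prime)\,\textrm{id}.\]
Since $\deg O(2)=2$ and $\textrm{Vol}(\mathbb{CP}^1,\omega_{FS})=1$, the last term is a constant, namely $4\pi/\sigma$ in the paper's normalisation. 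Therefore the HYM equation $\sqrt{-1}\Lambda_\sigma R(D)=c_\sigma\textrm{id}_F$ is equivalent to the two displayed equations with
\[\tau=2c_\sigma,\qquad \tau^\prime=2c_\sigma-\tfrac{8\pi}{\sigma}.\]
Because $p^*$ is injective on forms, the equivalence holds in both directions.

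The main obstacle is the constant bookkeeping. We must compute $\mu_\sigma(F)$ with respect to $\omega_\sigma$, using $\deg_\sigma p^*E_i=\sigma\deg E_i$ up to the volume factor, and add the contribution of $c_1(O(2))$ on the rank $E_2$ summand. Then we must check that $2c_\sigma=\tau$ is exactly the defining relation for $\sigma$, and that $\tau^\prime$ satisfies \eqref{eq:tau-prime}. The cleanest way to do this is to take traces of the two block equations and integrate over $M$. This yields $\tau\rank E_1+\tau^\prime\rank E_2=\frac{4\pi}{\textrm{Vol}}(\deg E_1+\deg E_2)$ automatically, because the $\phi,\psi$ terms cancel in trace. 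Solving the resulting identity for $\sigma$ recovers the given formula, and in particular confirms the factor $2\rank E_2$. The signs of the quadratic terms, which depend on the ordering in $\alpha\wedge\alpha^*$ versus $\alpha^*\wedge\alpha$, need care, but they follow Garc\'{i}a-Prada's computation for the case $\psi=0$.
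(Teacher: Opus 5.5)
Your plan is the paper's: write $D$ in block form, use $\phi\circ\psi=\psi\circ\phi=0$ to kill the cross terms in the diagonal blocks, contract with $\Lambda_\sigma=\Lambda_g+\sigma^{-1}\Lambda_{FS}$, and match $c_\sigma$ and $c_\sigma-4\pi/\sigma$ with $\tau/2$ and $\tau^\prime/2$. Your diagonal blocks, signs and constants agree with the paper's. Your trace check of $\sigma$ is valid if you read it as the Chern--Weil identity, which holds for every $\sigma>0$ whether or not a solution exists; read that way, it is equivalent to the paper's direct computation of $\deg_{g_\sigma}F$. The direction (1)$\Rightarrow$(2) is fine.

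The gap is in (2)$\Rightarrow$(1), where you assert that everything in the off-diagonal blocks is of mixed type. That is not true. The $(1,2)$ block of $R(D)$ is
\[D_{\textrm{Hom}(F_2,F_1)}\bigl(p^*(\phi+\psi^*)\otimes q^*\alpha\bigr)=p^*\bigl(D_{\textrm{Hom}(E_2,E_1)}(\phi+\psi^*)\bigr)\wedge q^*\alpha+p^*(\phi+\psi^*)\otimes q^*\bigl(d^{\nabla^\prime}\alpha\bigr),\]
and the last term is a $2$-form lying purely along $\mathbb{CP}^1$. $\Lambda_\sigma$ sends it to $\sigma^{-1}p^*(\phi+\psi^*)\otimes q^*\Lambda_{FS}d^{\nabla^\prime}\alpha$, and type considerations do not kill this; the same happens with $d^{\nabla^\prime}\beta$ in the $(2,1)$ block.

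You need the extra input $d^{\nabla^\prime}\alpha=0$ and $d^{\nabla^\prime}\beta=0$. This is what the paper invokes when it says $\alpha$ is anti-holomorphic and $\beta=\alpha^*$ is holomorphic. Alternatively, $\Lambda_{FS}d^{\nabla^\prime}\alpha$ is an $SU(2)$-invariant section of $O(-2)$, and a homogeneous line bundle of nonzero weight on $\mathbb{CP}^1$ has no nonzero invariant sections. With this term accounted for, your argument is complete.
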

\begin{proof}
  From Proposition \ref{prop:F_decomposition} and \ref{prop:Theta_F_decomposition}, we can decompose $D$ as
  \[D=\begin{pmatrix}
    p^*D_{E_1}&p^*\left(\phi+\psi^*\right)\otimes q^*\alpha\\
    p^*\left(-\phi^*+\psi\right)\otimes q^*\beta&p^*D_{E_2}\otimes\textrm{id}_{O\left(2\right)}+\textrm{id}_{E_2}\otimes q^*\nabla^\prime
  \end{pmatrix}.\]
  Thus the curvature $R\left(D\right)$ of $D$ is given by
  \begin{align}\label{eq:curvature representation}
    &R\left(D\right)=\\
    &{\footnotesize\begin{pmatrix}
    R\left(D_{F_1}\right)+p^*\left(-\phi\circ\phi^*+\psi^*\circ\psi\right)\otimes q^*\left(\alpha\wedge\beta\right)&D_{\textrm{Hom}\left(F_2,F_1\right)}\left(p^*\left(\phi+\psi^*\right)\otimes q^*\alpha\right)\\
    D_{\textrm{Hom}\left(F_1,F_2\right)}\left(p^*\left(-\phi^*+\psi\right)\otimes q^*\beta\right)&R\left(D_{F_2}\right)+p^*\left(-\phi^*\circ\phi+\psi\circ\psi^*\right)\otimes q^*\left(\beta\wedge\alpha\right)
  \end{pmatrix}},
\end{align}
  where $F_1=p^*E_1$ and $F_2=p^*E_2\otimes q^*O\left(2\right)$. Suppose that $D$ is a Hermitian-Yang-Mills connection with respect to $g_\sigma$. Then we have
  \begin{empheq}[left=\empheqlbrace]{align}
    &\sqrt{-1}\Lambda_{g_\sigma}\left(R\left(D_{F_1}\right)+\frac{\sqrt{-1}}{2}p^*\left(-\phi\circ\phi^*+\psi^*\circ\psi\right)\otimes\sigma q^*\omega_{FS}\right)=c\,\textrm{id}_{p^*E_1}\\
    &\sqrt{-1}\Lambda_{g_\sigma}\left(R\left(D_{F_2}\right)+\frac{\sqrt{-1}}{2}p^*\left(\phi^*\circ\phi-\psi\circ\psi^*\right)\otimes\sigma q^*\omega_{FS}\right)=c\,\textrm{id}_{p^*E_2}.
  \end{empheq}
  These equations are equivalent to the equations
  \begin{empheq}[left=\empheqlbrace]{align}
    &\sqrt{-1}\Lambda_gR\left(D_{E_1}\right)+\frac{1}{2}\phi\circ\phi^*-\frac{1}{2}\psi^*\circ\psi=c\,\textrm{id}_{E_1}\\
    &\sqrt{-1}\Lambda_gR\left(D_{E_2}\right)-\frac{1}{2}\phi^*\circ\phi+\frac{1}{2}\psi\circ\psi^*=\left(c-\frac{4\pi}{\sigma}\right)\textrm{id}_{E_2}.
  \end{empheq}
  Now, $\displaystyle c=\frac{2\pi}{\textrm{Vol}\left(M,g\right)\sigma}\cdot\frac{\deg_{g_\sigma}F}{\rank F}$ holds and $\deg_{g_\sigma}F$ is given by
  \[\deg_{g_\sigma}F=\sigma\left(\deg E_1+\deg E_2\right)+2\rank E_2\cdot\textrm{Vol}\left(M,g\right).\]
  Thus from the definition of $\sigma$, we have $\displaystyle c=\frac{1}{2}\tau$ and $\displaystyle -\frac{4\pi}{\sigma}+c=\frac{1}{2}\tau^\prime$.

  Conversely, suppose that $\left(D_{E_1},D_{E_2},\phi,\psi\right)$ satisfies equations \eqref{eq:dcv_equation_1} and \eqref{eq:dcv_equation_2}. To show that $D$ is a Hermitian-Yang-Mills connection, it is sufficient to prove that
  \begin{equation}\label{eq:off-diagonal}
    \Lambda D_{\textrm{Hom}\left(F_2,F_1\right)}\left(p^*\left(\phi+\psi^*\right)\otimes q^*\alpha\right)=0\ \ \textrm{and}\ \ \Lambda D_{\textrm{Hom}\left(F_1,F_2\right)}\left(p^*\left(-\phi^*+\psi\right)\otimes q^*\beta\right)=0
  \end{equation}
  holds. Since $\alpha$ is anti-holomorphic and $\beta$ is holomorphic, we have
  \begin{align}
    &D_{\textrm{Hom}\left(F_2,F_1\right)}\left(p^*\left(\phi+\psi^*\right)\otimes q^*\alpha\right)=p^*\left(D^\prime\phi+D^{\prime\prime}\psi^*\right)\otimes q^*\alpha+p^*\left(\phi+\psi^*\right)\otimes q^*\bar{\partial}^\prime\alpha\\
    &D_{\textrm{Hom}\left(F_1,F_2\right)}\left(p^*\left(-\phi^*+\psi\right)\otimes q^*\beta\right)=p^*\left(-D^{\prime\prime}\phi^*+D^\prime\psi^*\right)\otimes q^*\alpha+p^*\left(\phi+\psi^*\right)\otimes q^*\partial^\prime\beta
  \end{align}
  and thus \eqref{eq:off-diagonal} holds.
\end{proof}
\begin{definition}
  For a quadruplet $\left(D_{E_1},D_{E_2},\phi,\psi\right)\in\mathscr{N}$, the system of equations \eqref{eq:dcv_equation_1} and \eqref{eq:dcv_equation_2} are called the \textit{doubly-coupled $\tau$-vortex equations}.
\end{definition}
\subsection{Yang-Mills-Higgs-type functional}
By Theorem \ref{prop:dimensional reduction}, the Hermitian-Yang-Mills connection with respect to $g_\sigma$ corresponds to the solution to the doubly-coupled $\tau$-vortex equation. In this subsection, we discuss this correspondence from the perspective of Yang-Mills-type functional. For real numbers $\tau$ and $\tau^\prime$, consider a functional $\textrm{YMH}_{\tau,\tau^\prime}:\widetilde{\mathscr{N}}\to\mathbb{R}_{\geq 0}$ defined by, for $\left(D_{E_1},D_{E_2},\phi,\psi\right)\in\widetilde{\mathscr{N}}$,
\begin{align}
  \textrm{YMH}_{\tau,\tau^\prime}&\left(D_{E_1},D_{E_2},\phi,\psi\right)\\
  :=&\left\|R\left(D_{E_1}\right)\right\|_{L^2\left(M\right)}^2+\left\|R\left(D_{E_2}\right)\right\|_{L^2\left(M\right)}^2\\
  &+\left\|D_{\textrm{Hom}\left(E_2,E_1\right)}\phi\right\|_{L^2\left(M\right)}^2+\left\|D_{\textrm{Hom}\left(E_1,E_2\right)}\psi\right\|_{L^2\left(M\right)}^2\\
  &+\frac{1}{4}\left\|\phi\circ\phi^*-\psi^*\circ\psi-\tau\,\textrm{id}_{E_1}\right\|_{L^2\left(M\right)}^2+\frac{1}{4}\left\|\psi\circ\psi^*-\phi^*\circ\phi-\tau^\prime\,\textrm{id}_{E_2}\right\|_{L^2\left(M\right)}^2.
\end{align}
By a direct computation as in \cite[Lemma $2.10$]{Garcia-Prada}, we obtain the following proposition.
\begin{proposition}\label{prop:functional_minima}
  \textit{\begin{align}
    \textrm{YMH}_{\tau,\tau^\prime}&\left(D_{E_1},D_{E_2},\phi,\psi\right)\\
    =&\left\|\sqrt{-1}\Lambda R\left(D_{E_1}\right)+\frac{1}{2}\phi\circ\phi^*-\frac{1}{2}\psi^*\circ\psi-\frac{\tau}{2}\textrm{id}_{E_1}\right\|_{L^2\left(M\right)}^2\\
    &+\left\|\sqrt{-1}\Lambda R\left(D_{E_2}\right)-\frac{1}{2}\phi^*\circ\phi+\frac{1}{2}\psi\circ\psi^*-\frac{\tau^\prime}{2}\textrm{id}_{E_2}\right\|_{L^2\left(M\right)}^2\\
    &+2\left\|D_{\textrm{Hom}\left(E_2,E_1\right)}^{\prime\prime}\phi\right\|_{L^2\left(M\right)}^2+2\left\|D_{\textrm{Hom}\left(E_1,E_2\right)}^{\prime\prime}\psi\right\|_{L^2\left(M\right)}^2\\
    &+2\pi\left(\tau\deg E_1+\tau^\prime\deg E_2\right)-8\pi^2\left(\textrm{Ch}_2\left(E_1\right)+\textrm{Ch}_2\left(E_2\right)\right)
  \end{align}}
\end{proposition}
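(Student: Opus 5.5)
Following the strategy of \cite[Lemma $2.10$]{Garcia-Prada}, I would expand each term of $\textrm{YMH}_{\tau,\tau^\prime}$ and compare it with the right-hand side term by term. Everything rests on two ingredients. The first is the Chern--Weil type identity for a connection $D$ in $\mathscr{H}\left(E,h\right)$ (a HS connection of a Higgs bundle):
\[\left\|R\left(D\right)\right\|_{L^2}^2=\left\|\Lambda R\left(D\right)\right\|_{L^2}^2-8\pi^2\textrm{Ch}_2\left(E\right),\]
with the normalization of $\textrm{Ch}_2$ chosen so that the constant comes out as $8\pi^2$. This is the standard Kähler identity: the primitive part of the $(1,1)$-component contributes with opposite sign in $\int\textrm{tr}\,R\wedge R\wedge\omega^{n-2}$, and one must show that the $(2,0)$ and $(0,2)$ parts do not spoil it. For a HS connection, $R^{2,0}=\partial_h\theta$ and $R^{0,2}=\bar{\partial}\theta^*$, and these pair against each other exactly as in Simpson's argument \cite{Simpson2}. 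I would quote this identity as known for Higgs bundles rather than rederive it.

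The second ingredient is the Weitzenböck/Kähler identity for sections. For $\phi\in\Omega^0\bigl(\textrm{Hom}\left(E_2,E_1\right)\bigr)$ it reads
\[\left\|D\phi\right\|^2=\left\|D^\prime\phi\right\|^2+\left\|D^{\prime\prime}\phi\right\|^2,\qquad \left\|D^\prime\phi\right\|^2-\left\|D^{\prime\prime}\phi\right\|^2=\int\left\langle\sqrt{-1}\Lambda R\left(D_{\textrm{Hom}}\right)\phi,\phi\right\rangle,\]
obtained by integrating by parts with $\sqrt{-1}\left[\Lambda,\bar{\partial}\right]=\partial^*$ adapted to $D^\prime,D^{\prime\prime}$; the cross terms vanish because $D_h^\prime$ and $D^{\prime\prime}$ are separately integrable. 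Hence $\left\|D\phi\right\|^2=2\left\|D^{\prime\prime}\phi\right\|^2+\int\textrm{tr}\bigl(\sqrt{-1}\Lambda R\left(D_{E_1}\right)\phi\phi^*-\phi\,\sqrt{-1}\Lambda R\left(D_{E_2}\right)\phi^*\bigr)$. The analogous formula holds for $\psi$ with the roles of $E_1$ and $E_2$ swapped.

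Next I expand the squared norms on the right-hand side. The first one gives $\left\|\Lambda R_1\right\|^2+\frac14\left\|\phi\phi^*-\psi^*\psi-\tau\right\|^2+\textrm{Re}\int\textrm{tr}\bigl(\sqrt{-1}\Lambda R_1(\phi\phi^*-\psi^*\psi-\tau)\bigr)$, and similarly for $E_2$. The cross terms containing $\phi$ and $\psi$ cancel exactly against the curvature integrals from the Weitzenböck formulas, with the signs matching the $\pm\frac12$ coefficients in \eqref{eq:dcv_equation_1} and \eqref{eq:dcv_equation_2}. The remaining terms $-\tau\int\textrm{tr}\,\sqrt{-1}\Lambda R_1=-2\pi\tau\deg E_1$ (up to the paper's normalization of $\deg$) and the analogous $\tau^\prime$ term are moved to the other side and produce the constant $2\pi\left(\tau\deg E_1+\tau^\prime\deg E_2\right)$. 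The constraint $\phi\circ\psi=0=\psi\circ\phi$ defining $\widetilde{\mathscr{N}}$ would be used to discard the cross terms $\textrm{tr}(\phi\phi^*\psi^*\psi)$ wherever they appear asymmetrically.

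The main obstacle is bookkeeping rather than any conceptual difficulty. I have to check that the normalizations of $\deg$, $\textrm{Ch}_2$ and $\Lambda$ give exactly the constants $2\pi$ and $8\pi^2$, and that the $\phi\phi^*$ and $\psi^*\psi$ cross terms cancel with the correct signs. A second delicate point is justifying the Weitzenböck identity for the full HS connection $D=D^\prime+D^{\prime\prime}$, since the Higgs parts $\theta,\theta^*$ enter $D\phi$. I would handle this by noting that the formal argument only uses $D^{\prime2}=D^{\prime\prime2}=0$ together with the Kähler identities for $D^\prime,D^{\prime\prime}$, which Simpson establishes.
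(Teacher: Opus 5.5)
Your proposal is correct and is the direct computation the paper invokes by citing \cite[Lemma $2.10$]{Garcia-Prada}. It combines the Chern--Weil identity for $\|R\|^2$, where the $(2,0)$ and $(0,2)$ parts of a HS curvature pair positively because $R^{0,2}=(R^{2,0})^*$, the K\"{a}hler-identity formula $\|D\phi\|^2=2\|D^{\prime\prime}\phi\|^2+\int\langle\sqrt{-1}\Lambda R\,\phi,\phi\rangle$, and expansion of the squares; the appeal to $\phi\circ\psi=\psi\circ\phi=0$ is unnecessary, since the quartic terms on the two sides coincide identically.
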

Take $D\in\mathscr{H}\left(E,h\right)^{SU\left(2\right)}$ and let $\left(D_{E_1},D_{E_2},\phi,\psi\right)\in\mathscr{N}$ be an element corresponding to $D$. The curvature $R\left(D\right)$ of $D$ is given by \eqref{eq:curvature representation}. Its $L^2$-norm with respect to a metric $g_\sigma$ on $M\times\mathbb{P}^1$ is given by $\textrm{YMH}_{\tau,\tau^\prime}\left(D_{E_1},D_{E_2},\phi,\psi\right)$. Hence $D$ is a Hermitian-Yang-Mills connection if and only if $D$ attains the minimum of the Yang-Mills functional. Thus by Proposition \ref{prop:functional_minima}, $\left(D_{E_1},D_{E_2},\phi,\psi\right)\in\mathscr{N}$ is a solution to the doubly-coupled $\tau$-vortex equations if and only if $D$ is a Hermitian-Yang-Mills connection.
\section{Vortex Equations For Higgs Bundles}
Let $\left(M,g\right)$ be a compact K\"{a}hler manifold and $E$ be a smooth complex vector bundle over $M$. Let $h$ be a smooth Hermitian metric on $E$. $\underline{\mathbb{C}}$ denotes the trivial line bundle over $M$ and $h_{\textrm{std}}$ denotes the standard Hermitian metric on $\underline{\mathbb{C}}$. In this section, we introduce the $\tau$-vortex equations for Higgs bundles as a special case of doubly-coupled $\tau$-vortex equations.
\subsection{Definition of the vortex equations for Higgs bundles}
For a quadruplet $\left(D_E,D_{\underline{\mathbb{C}}},s,\xi\right)\in\mathscr{N}$, either $s$ or $\xi$ vanishes identically since $\textrm{supp}\,s$ and $\textrm{supp}\,\xi$ are disjoint and they are holomorphic. Suppose $\xi$ vanishes. Then if the quadruplet is a solution to the doubly-coupled $\tau$-vortex equations, it satisfies the following equations.
\begin{empheq}[left=\empheqlbrace]{align}
  \sqrt{-1}\Lambda R\left(D_E\right)+\frac{1}{2}s\circ s^*=\frac{\tau}{2}\textrm{id}_E \label{eq:vortex_Higgs},\\
  \sqrt{-1}\Lambda R\left(D_{\underline{\mathbb{C}}}\right)-\frac{1}{2}s^*\circ s=\frac{\tau^\prime}{2}\textrm{id}_{\underline{\mathbb{C}}} \label{eq:linear equation}.
\end{empheq}
\begin{definition}
  \textit{A Higgs pair} is a pair $\left(D^{\prime\prime},s\right)$ consisting of a structure of Higgs bundle $D^{\prime\prime}$ in $E$ and $s\in\Omega^0\left(E\right)$ satisfying $D^{\prime\prime}s=0$. When we fix a Hermitian metric $h$ on $E$, for a Higgs pair $\left(D^{\prime\prime},s\right)$, the equation \eqref{eq:vortex_Higgs} is called the \textit{$\tau$-vortex equation for Higgs bundle $\left(E,D^{\prime\prime}\right)$}.
\end{definition}
If a Higgs pair $\left(D^{\prime\prime},s\right)$ is a solution to $\tau$-vortex equation, the equation \eqref{eq:linear equation} admits a unique solution $D_{\underline{\mathbb{C}}}$ which is compatible with the trivial holomorphic structure in $\underline{\mathbb{C}}$. (cf. \cite{Garcia-Prada}.) Therefore we can identify the $\tau$-vortex equation with a special case of the doubly-coupled $\tau$-vortex equations.
\subsection{Kobayashi-Hitchin correspondence for Higgs pairs}
In $2003$, \'{A}lvarez-C\'{o}nsul and Garc\'{i}a-Prada \cite{Alvarez-Consul-Garcia-Prada} defined quiver bundles over a compact K\"{a}hler manifold and their stability and investigated the Kobayashi-Hitchin correspondence for quiver bundles. A Higgs pair is an example of a quiver bundle. Indeed, let $Q=\left(Q_0,Q_1\right)$ be a quiver with vertices $Q_0:=\left\{0,1\right\}$ and arrows $Q_1:=\left\{a:0 \to 1,b:1 \to 1\right\}$. We set $E_0:=\underline{\mathbb{C}}, E_1:=E, M_a:=\underline{\mathbb{C}}$ and $M_b:=T^\prime M$. Also we set $\phi_a:=s:\underline{\mathbb{C}}\to E$ and $\phi_b$ to be a Higgs field of $E$. In the case of Higgs pairs, the stability for quiver bundles can be written as below. (Substitute $\sigma_0=\sigma_1=2,\tau_0=\tau^\prime,\tau_1=\tau$ into Definition $2.5$ of \cite{Alvarez-Consul-Garcia-Prada}.)
\begin{definition}\label{def:tau-stability}
  Fix a real number $\tau$. A Higgs pair $\left(D^{\prime\prime},s\right)$ is $\tau$-stable if it satisfies the following conditions.
  \begin{enumerate}
    \item $\displaystyle\mu\left(F\right)<\frac{\textrm{Vol}\left(M,g\right)}{4\pi}\tau$ holds for every Higgs subsheaf $F$ of $\left(E,D^{\prime\prime}\right)$ with $0<\rank F$.
    \item $\displaystyle\mu\left(E/F\right)>\frac{\textrm{Vol}\left(M,g\right)}{4\pi}\tau$ holds for every Higgs subsheaf $F$ of $\left(E,D^{\prime\prime}\right)$ with $s\in H^0\left(F\right)$ and $0<\rank F<\rank E$.
  \end{enumerate}
\end{definition}
The Kobayashi-Hitchin correspondence is established as follows.
\begin{theorem}\label{thm:KHC}
  \textit{For a Higgs pair $\left(D^{\prime\prime},s\right)$, the following are equivalent.
  \begin{enumerate}[$\left(1\right)$]
    \item There exists a Hermitian metric $h$ on $E$ such that $\left(D^{\prime\prime},s\right)$ is a solution to the $\tau$-vortex equation for Higgs bundle \eqref{eq:vortex_Higgs}.
    \item $\left(D^{\prime\prime},s\right)$ satisfies either $\left(a\right)$ or $\left(b\right)$.
    \begin{enumerate}[$\left(a\right)$]
      \item $\left(E,D^{\prime\prime}\right)$ is $\tau$-stable.
      \item There exists Higgs subbundles $E_s, E^\prime$ of $\left(E,D^{\prime\prime}\right)$ such that $E$ splits holomorphically as $E=E_s\oplus E^\prime$, where $E_s$ and $E^\prime$ satisfy the following conditions.
      \begin{itemize}
        \item $E^\prime$ is poly-stable Higgs sheaf with $\displaystyle\mu\left(E^\prime\right)=\frac{\textrm{Vol}\left(M,g\right)}{4\pi}\tau$ holds.
        \item $E_s$ contains the section $s$ and a Higgs pair $\left(E_s,s\right)$ is $\tau$-stable.
      \end{itemize}
    \end{enumerate}
  \end{enumerate}}
\end{theorem}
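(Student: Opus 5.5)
The plan is to deduce Theorem \ref{thm:KHC} from the general Kobayashi--Hitchin correspondence for quiver bundles of \'{A}lvarez-C\'{o}nsul and Garc\'{i}a-Prada \cite{Alvarez-Consul-Garcia-Prada}, applied to the quiver $Q$ with $Q_0=\{0,1\}$ and arrows $a:0\to1$, $b:1\to1$ described above. There are three translation steps: identify Higgs pairs with a class of $Q$-bundles, identify the quiver vortex equations with \eqref{eq:vortex_Higgs}, and identify quiver (poly)stability with the conditions in (2).

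\textbf{First}, I would show that a Higgs pair $(D^{\prime\prime},s)$ is the same as a holomorphic $Q$-bundle $(E_0=\underline{\mathbb{C}},E_1=E,\phi_a=s,\phi_b=\theta)$ satisfying the relations $\theta\wedge\theta=0$ and $\theta(s)=0$. Such a $Q$-bundle is a quiver bundle with relations, and the theorem of \cite{Alvarez-Consul-Garcia-Prada} is stated for that setting. The required identities come from splitting $D^{\prime\prime}s=0$ into its $(0,1)$- and $(1,0)$-parts, which gives $\bar\partial^E s=0$ and $\theta s=0$.

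\textbf{Second}, I would write the quiver vortex equations with $\sigma_0=\sigma_1=2$, $\tau_1=\tau$, $\tau_0=\tau^\prime$, and with the metric on $M_b=T^\prime M$ induced by $g$. The $b$-arrow contributes $[\theta,\theta^*_h]$ after contraction, and this term combines with $\sqrt{-1}\Lambda R(\nabla_h)$ into $\sqrt{-1}\Lambda R(D_h)$ for the Hitchin--Simpson connection. The $a$-arrow contributes $\tfrac12 s\circ s^*$ at vertex $1$ and $-\tfrac12 s^*\circ s$ at vertex $0$. This recovers \eqref{eq:vortex_Higgs} and \eqref{eq:linear equation}. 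As noted above, \eqref{eq:linear equation} is uniquely solvable once \eqref{eq:vortex_Higgs} holds, because \eqref{eq:tau-prime} makes the degrees match. Hence (1) is equivalent to the existence of a solution to the quiver vortex equations.

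\textbf{Third}, and this is the main obstacle, I would match the combinatorics of stability. A $Q$-subsheaf is a pair $(F_0,F)$ with $F_0\in\{0,\underline{\mathbb{C}}\}$ and $F$ a $\theta$-invariant subsheaf, and if $F_0=\underline{\mathbb{C}}$ then $s\in H^0(F)$. With the constraint \eqref{eq:tau-prime}, the quiver slope condition $\mu_{\sigma,\tau}(F_0,F)<\mu_{\sigma,\tau}(E_0,E)$ becomes:
\begin{itemize}
\item for $F_0=0$, the inequality $\mu(F)<\frac{\textrm{Vol}(M,g)}{4\pi}\tau$;
\item for $F_0=\underline{\mathbb{C}}$, after rewriting via the complementary quotient, the inequality $\mu(E/F)>\frac{\textrm{Vol}(M,g)}{4\pi}\tau$.
\end{itemize}
This is a linear computation, and I would check the normalisation constants carefully against the factor $\frac{4\pi}{\textrm{Vol}}$.

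Polystability is handled last. A polystable $Q$-bundle is a direct sum of stable $Q$-bundles with equal $\tau$-slope. Exactly one summand has nonzero vertex-$0$ part, namely $(\underline{\mathbb{C}},E_s,s)$, which must then be a $\tau$-stable Higgs pair. Every other summand has the form $(0,E_i)$, a stable Higgs bundle with $\mu(E_i)=\frac{\textrm{Vol}}{4\pi}\tau$. Taking $E^\prime=\bigoplus E_i$ gives case (b), and case (a) is the situation with no such extra summands.
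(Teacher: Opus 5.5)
\textbf{Overall.} Your route is the paper's. The paper also obtains the theorem by regarding $(D'',s)$ as a twisted quiver bundle for the quiver with arrows $a\colon 0\to 1$, $b\colon 1\to 1$, and by substituting $\sigma_0=\sigma_1=2$, $\tau_0=\tau'$, $\tau_1=\tau$ into \cite{Alvarez-Consul-Garcia-Prada}. Your stability bookkeeping is correct, apart from the case $s=0$ treated below: the two types of $Q$-subsheaves, and the polystable decomposition with exactly one summand carrying $\underline{\mathbb{C}}$.

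\textbf{The gap: the equations do not match.} The genuine gap is your second step, the claim that (1) is equivalent to solvability of the quiver vortex equations. The paper asserts the same thing in Section 3.1, where it says \eqref{eq:linear equation} has a unique solution compatible with the trivial holomorphic structure, so the problem is inherited from there. In \cite{Alvarez-Consul-Garcia-Prada} the unknown is a pair of metrics $(H_0,H_1)$ on the \emph{fixed} holomorphic bundles $\underline{\mathbb{C}}$ and $E$. The adjoint $s^*$ is taken with respect to both metrics, so $H_0$ enters the vertex-$1$ equation. Write $H_0=e^{2u}h_{\mathrm{std}}$.
\begin{itemize}
\item Equation \eqref{eq:vortex_Higgs} is the vertex-$1$ equation with $u\equiv 0$. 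But then the connection on $\underline{\mathbb{C}}$ is flat, and \eqref{eq:linear equation} becomes the pointwise identity $|s|_h^2\equiv-\tau'$. This fails as soon as $|s|_h$ is not constant, e.g.\ whenever $s$ has a zero.
\item The degree identity from \eqref{eq:tau-prime} only says that the right-hand side of \eqref{eq:linear equation} has mean zero. So you can solve for a non-constant $u$, or equivalently for a non-flat unitary connection on $(\underline{\mathbb{C}},h_{\mathrm{std}})$.
\item But then $s\circ s^*$ in the vertex-$1$ equation is multiplied by $e^{-2u}$. In the fixed-metric picture, $D''_{\underline{\mathbb{C}}}\neq\bar\partial$, so $D''_{\mathrm{Hom}(\underline{\mathbb{C}},E)}s\neq 0$ and the quadruple leaves $\mathscr{N}$.
\end{itemize}
Concretely, set $h'=e^{-2u}H_1$. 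The quiver system is then equivalent to $2\sqrt{-1}\Lambda\bar\partial\partial u=\frac{\tau'}{2}+\frac12|s|^2_{h'}$ together with
\[\sqrt{-1}\Lambda R(D_{h'})+\tfrac12\, s\circ s^{*}_{h'}+\tfrac12\,|s|^2_{h'}\,\mathrm{id}_E=\tfrac{\tau-\tau'}{2}\,\mathrm{id}_E .\]
This is not \eqref{eq:vortex_Higgs}. For $\operatorname{rank}E\geq 2$ the term $\frac12|s|^2\mathrm{id}_E$ cannot be absorbed. Even for a line bundle one gets \eqref{eq:vortex_Higgs} with parameter $\tau-\tau'$ instead of $\tau$.

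\textbf{What this means and how to close it.} \cite{Alvarez-Consul-Garcia-Prada} yields (2) $\Leftrightarrow$ solvability of this modified equation, not (2) $\Leftrightarrow$ (1). The two existence statements coincide only a posteriori, once both are known to be equivalent to (2), and that is exactly what is to be proved. To close the gap you have these options:
\begin{itemize}
\item Prove a Hitchin--Kobayashi theorem for the framed equation \eqref{eq:vortex_Higgs} itself. For instance, carry Bradlow's argument \cite{Bradlow2} for holomorphic pairs over with Simpson's estimates for the Higgs term, or use a general Bradlow-type correspondence for holomorphic sections of associated bundles.
\item Try to obtain \eqref{eq:vortex_Higgs} as the $\sigma_0\to\infty$ limit of the quiver equations. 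Their stability condition is independent of $\sigma_0$, but this route needs uniform estimates.
\item Restate (1) as the existence of metrics on both $E$ and $\underline{\mathbb{C}}$ solving the coupled system.
\end{itemize}

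\textbf{A smaller point in your third step.} If $s=0$, then $(\underline{\mathbb{C}},0)$ is a $Q$-subsheaf. Your rewriting gives $\mu(E)>\frac{\mathrm{Vol}(M,g)}{4\pi}\tau$, which contradicts the case $F=E$ of the first condition, so quiver stability forces $s\neq 0$. Definition \ref{def:tau-stability} requires $0<\operatorname{rank}F$ in its second condition and misses this, so the quiver condition does not literally become that Definition. Indeed, take a line bundle with $s=0$ and $\tau>\frac{4\pi}{\mathrm{Vol}(M,g)}\deg E$. The Definition holds, but \eqref{eq:vortex_Higgs} integrates to $\tau=\frac{4\pi}{\mathrm{Vol}(M,g)}\deg E$. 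The statement therefore needs $s\neq 0$, or the quiver form of the stability condition.
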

We define a set $\mathscr{T}$ as
\[\mathscr{T}:=\left\{\frac{4\pi}{\textrm{Vol}\left(M,g\right)}\mu\left(F\right)\middle|F \textrm{ is a Higgs subbundle of }E\right\}.\]
Then by Theorem \ref{thm:KHC}, the following corollary holds.
\begin{corollary}
  If we choose $\tau\in\mathbb{R}$ so that $\tau\notin\mathscr{T}$, for a Higgs pair $\left(D^{\prime\prime},s\right)$, it is $\tau$-stable if and only if there exists a Hermitian metric $h$ on $E$ such that $\left(D^{\prime\prime},s\right)$ is a solution to the $\tau$-vortex equation for Higgs bundle.
\end{corollary}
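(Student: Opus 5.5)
The corollary should follow from Theorem \ref{thm:KHC}. The only work is to show that alternative $\left(2\right)\left(b\right)$ cannot occur when $\tau\notin\mathscr{T}$. Then $\left(1\right)$ is equivalent to $\left(2\right)\left(a\right)$ alone, which is exactly the claimed statement.

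First, assume $\left(2\right)\left(b\right)$ holds for a Higgs pair $\left(D^{\prime\prime},s\right)$. Then there is a holomorphic splitting $E=E_s\oplus E^\prime$ into Higgs subbundles. Here $E^\prime$ is a poly-stable Higgs sheaf with $\mu\left(E^\prime\right)=\frac{\textrm{Vol}\left(M,g\right)}{4\pi}\tau$.

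If $E^\prime$ is nonzero, then it is a Higgs subbundle of $E$. Hence $\tau=\frac{4\pi}{\textrm{Vol}\left(M,g\right)}\mu\left(E^\prime\right)\in\mathscr{T}$, which contradicts the choice of $\tau$.

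If $E^\prime=0$, then $E=E_s$. In that case $\left(2\right)\left(b\right)$ just says that $\left(E,s\right)=\left(D^{\prime\prime},s\right)$ is $\tau$-stable, so it reduces to $\left(2\right)\left(a\right)$. The case $E_s=0$ is degenerate: either $s=0$, or it is excluded because $E_s$ contains $s$. In either case it is absorbed by the same argument.

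Consequently, under $\tau\notin\mathscr{T}$, condition $\left(2\right)$ is equivalent to $\left(2\right)\left(a\right)$. Theorem \ref{thm:KHC} then gives the equivalence between $\tau$-stability and the existence of a Hermitian metric $h$ solving \eqref{eq:vortex_Higgs}. The implication from stability to existence of $h$ needs no hypothesis on $\tau$ at all; the hypothesis is used only to rule out the split case.

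The main subtlety is definitional rather than analytic. The set $\mathscr{T}$ is defined through Higgs subbundles, while $E^\prime$ is described as a poly-stable Higgs sheaf. I will use that $E^\prime$ is a direct summand in a holomorphic splitting of $E$, so it is locally free and $D^{\prime\prime}$-invariant, and therefore genuinely a Higgs subbundle. I will also handle the convention for the zero bundle, which has undefined slope, by treating $E^\prime=0$ separately as above.
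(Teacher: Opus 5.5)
Your proposal is correct and follows the paper's route. The paper simply invokes Theorem \ref{thm:KHC}, noting (as in its introduction) that alternative $\left(2\right)\left(b\right)$ would force $\tau=\frac{4\pi}{\textrm{Vol}\left(M,g\right)}\mu\left(E^\prime\right)\in\mathscr{T}$, so only $\left(2\right)\left(a\right)$ survives. In your degenerate case $E_s=0$ (so $s=0$ and $E^\prime=E$), you are implicitly counting $E$ itself as a Higgs subbundle in $\mathscr{T}$. That is the right reading, and it is genuinely needed: otherwise a poly-stable $E$ with $s=0$ and $\mu\left(E\right)=\frac{\textrm{Vol}\left(M,g\right)}{4\pi}\tau$ would satisfy $\left(1\right)$ without being $\tau$-stable.
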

Thus in this paper, we assume the following.
\begin{assumption}\label{eq:tau_exclusion}
  $\tau\notin\mathscr{T}$.
\end{assumption}
For later use in section $5$, we state the following lemmas.
\begin{lemma}\label{lemma:s-nonzero}
  For a $\tau$-stable Higgs pair $\left(D^{\prime\prime},s\right)$, $s$ is not a zero-section.
\end{lemma}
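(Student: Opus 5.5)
We argue by contradiction, playing conditions (1) and (2) of Definition \ref{def:tau-stability} against each other. Suppose $\left(D^{\prime\prime},s\right)$ is $\tau$-stable but $s\equiv 0$. Then every Higgs subsheaf $F$ of $\left(E,D^{\prime\prime}\right)$ satisfies $s\in H^0\left(F\right)$, so condition (2) applies to every $F$ with $0<\rank F<\rank E$. We compare this with condition (1) on a single well-chosen subsheaf. We treat the cases $\rank E\geq 2$ and $\rank E=1$ separately.

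\emph{Case $\rank E\geq 2$.} We need a Higgs subsheaf $F$ with $0<\rank F<\rank E$. If $\theta$ had no proper nonzero invariant subsheaves, we could still use $F=E$ in (1), which gives $\mu\left(E\right)<\frac{\textrm{Vol}\left(M,g\right)}{4\pi}\tau$. A cleaner route uses $F=0$ and $F=E$ with the convention for (2) in the limiting cases, but since (2) requires $0<\rank F<\rank E$, we instead argue directly. By (1) applied to $E$ itself, $\mu\left(E\right)<\frac{\textrm{Vol}\left(M,g\right)}{4\pi}\tau$. If a proper Higgs subsheaf $F$ exists, then (1) and (2) give $\mu\left(F\right)<\frac{\textrm{Vol}\left(M,g\right)}{4\pi}\tau<\mu\left(E/F\right)$. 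On the other hand, $\deg E=\deg F+\deg E/F$ shows that $\mu\left(E\right)$ is a weighted average of $\mu\left(F\right)$ and $\mu\left(E/F\right)$. This alone is not contradictory, so the argument must rest on $E$ itself in (1).

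The robust contradiction should therefore come from reading the second condition in the quiver formulation. In \cite{Alvarez-Consul-Garcia-Prada}, stability is tested on all subquivers, including the subobject $\left(E_0,E_1\right)=\left(0,E\right)$. This subobject is a valid subquiver precisely when $\phi_a=s=0$, and it yields the condition $\mu_\tau\left(0,E\right)<\mu_\tau\left(\underline{\mathbb{C}},E\right)$. Unwinding the $\tau$-slope with $\sigma_0=\sigma_1=2$, $\tau_0=\tau^\prime$, $\tau_1=\tau$, and using \eqref{eq:tau-prime}, this becomes $\mu\left(E/E\right)$-type data. Concretely, it is the case $F=E$ of (2), read as a condition on the quotient $\underline{\mathbb{C}}$, and it amounts to $\mu\left(E\right)>\frac{\textrm{Vol}\left(M,g\right)}{4\pi}\tau$.

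So the plan is as follows.
\begin{enumerate}
\item Note that when $s=0$, the subsheaf $E$ contains $s$. The second stability condition, in its quiver form from which Definition \ref{def:tau-stability} was derived, then forces $\mu\left(E\right)\geq\frac{\textrm{Vol}\left(M,g\right)}{4\pi}\tau$ (strictly $>$).
\item Condition (1) with $F=E$ gives $\mu\left(E\right)<\frac{\textrm{Vol}\left(M,g\right)}{4\pi}\tau$, which is a contradiction.
\end{enumerate}

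\textbf{Alternative route.} One could instead use Theorem \ref{thm:KHC} together with Assumption \ref{eq:tau_exclusion}. Stability gives a metric solving \eqref{eq:vortex_Higgs} with $s=0$, so $\sqrt{-1}\Lambda R\left(D\right)=\frac{\tau}{2}\textrm{id}_E$. Taking the trace and integrating gives $\tau=\frac{4\pi}{\textrm{Vol}\left(M,g\right)}\mu\left(E\right)\in\mathscr{T}$, since $E$ is a Higgs subbundle of itself. This contradicts Assumption \ref{eq:tau_exclusion}. This second route is short and uses only results already stated, so we would present it as the main proof. The normalization $\deg E=\frac{\sqrt{-1}}{2\pi}\int\textrm{tr}\Lambda R\,\omega^n/n$ should be checked against the constant $c=\frac{2\pi\mu\left(E\right)}{\textrm{Vol}\left(M,g\right)}$ in \eqref{eq:1.5}; this is the only computational point. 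With that constant, $\frac{\tau}{2}=c$ gives exactly $\tau=\frac{4\pi\mu\left(E\right)}{\textrm{Vol}\left(M,g\right)}$.

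The main obstacle is making sure that the version of stability used in Theorem \ref{thm:KHC} really covers $s=0$, rather than stability being vacuous there. The route via Theorem \ref{thm:KHC} and Assumption \ref{eq:tau_exclusion} sidesteps this.
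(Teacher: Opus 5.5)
Your main argument, the route through Theorem~\ref{thm:KHC}, is correct, and its first half is exactly the paper's. Stability gives a metric solving \eqref{eq:vortex_Higgs} with $s=0$; that equation is the HYM equation, so $\tau=\frac{4\pi}{\textrm{Vol}\left(M,g\right)}\mu\left(E\right)$.

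You differ only in the final contradiction. The paper applies condition (1) of Definition~\ref{def:tau-stability} to $F=E$, which gives $\mu\left(E\right)<\frac{\textrm{Vol}\left(M,g\right)}{4\pi}\tau=\mu\left(E\right)$. You instead invoke the standing Assumption~\ref{eq:tau_exclusion}. That is legitimate, since the assumption is imposed globally just before the lemma, but it costs more in two ways:
\begin{itemize}
\item it makes the lemma depend on $\tau\notin\mathscr{T}$;
\item it requires counting $E$ itself among the Higgs subbundles that define $\mathscr{T}$. This convention matters: under the coprimality of Section 5, no proper subbundle has slope $\mu\left(E\right)$.
\end{itemize}
The paper's closing works for every $\tau$ and uses only the definition of stability. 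You had already written the needed inequality in your first paragraph (``by (1) applied to $E$ itself''). Pairing it with your equality for $\tau$ is precisely the paper's proof.

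Your quiver paragraph contains a slip. The subobject $\left(0,E\right)$ is a subquiver for every $s$. Its stability condition is condition (1) at $F=E$, namely $\mu\left(E\right)<\frac{\textrm{Vol}\left(M,g\right)}{4\pi}\tau$, not the reverse inequality. The subquiver that exists exactly when $s=0$ is $\left(\underline{\mathbb{C}},0\right)$. It gives $\mu\left(E\right)>\frac{\textrm{Vol}\left(M,g\right)}{4\pi}\tau$, which is condition (2) with $F=0$.

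With that correction, your first plan yields a contradiction without Theorem~\ref{thm:KHC}. However, it uses a case that Definition~\ref{def:tau-stability} excludes by requiring $0<\textrm{rank}\,F$ in (2), so it is not available from the paper's definition as written.

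Finally, the KH route does not ``sidestep'' the question of whether stability is meaningful when $s=0$. Both your proof and the paper's apply Theorem~\ref{thm:KHC} to exactly such a pair.
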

\begin{proof}
  Suppose that $s=0$ holds. By the Kobayashi-Hitchin correspondence for Higgs pairs, there exists a Hermitian metric $h$ on $E$ such that $\left(D^{\prime\prime},s\right)$ is a solution to the equation \eqref{eq:vortex_Higgs}. This equation coincides with the HYM equation. Thus $\displaystyle\tau=\frac{4\pi\mu\left(E\right)}{\textrm{Vol}\left(M,g\right)}$ holds. However, by the definition of $\tau$-stability for Higgs pairs (Definition \ref{def:tau-stability}), we have $\displaystyle\mu\left(E\right)<\frac{\textrm{Vol}\left(M,g\right)}{4\pi}\tau=\mu\left(E\right)$ and this is a contraction. This completes the proof.
\end{proof}
\begin{lemma}\label{lem:L^4-identity}
  \textit{Let $h$ be a Hermitian metric on $E$ and $\left(D^{\prime\prime}=\bar{\partial}^E+\theta,s\right)$ be a solution to the $\tau$-vortex equation for Higgs bundle \eqref{eq:vortex_Higgs}. Then the following holds.
  \begin{enumerate}
    \item $\Delta\left\|s\right\|^2+2\left\|\partial^Es\right\|^2+2\left\|\theta^*s\right\|^2+\left\|s\right\|^4-\tau\left\|s\right\|^2=0.$
    \item $\Delta\left\|\theta\right\|^2+2\left\|\partial \theta\right\|^2+2\left\|\left[\theta\wedge\theta^*\right]\right\|^2+2\left\|s^*\circ\theta\right\|^2+2\textrm{Ric}\left(\theta,\theta\right)=0.$
  \end{enumerate}}
\end{lemma}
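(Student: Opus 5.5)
The plan is to prove both identities as Weitzenböck/Bochner-type computations. In each case we differentiate the pointwise norm twice with the Chern connection $\nabla=\partial^E+\bar{\partial}^E$ of $h$, and use the Kähler identities together with the holomorphicity conditions coming from $D^{\prime\prime}s=0$ and $\bar{\partial}\theta=0$. The vortex equation enters only through the curvature term $\sqrt{-1}\Lambda R$. We use the sign convention $\Delta=\sqrt{-1}\Lambda\bar{\partial}\partial$ on functions (or its analogue), fixed so that $\Delta\|s\|^2=-2\|\partial^Es\|^2+(\textrm{curvature term})$ for holomorphic $s$. Throughout, $R(D_h)=R(\nabla_h^E)+[\theta\wedge\theta^*]+(\partial\theta+\bar{\partial}\theta^*)$. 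Contracting with $\Lambda$ kills the $(2,0)$ and $(0,2)$ parts, so \eqref{eq:vortex_Higgs} reads
\[\sqrt{-1}\Lambda R(\nabla_h^E)+\sqrt{-1}\Lambda[\theta\wedge\theta^*]=\frac{\tau}{2}\textrm{id}_E-\frac12 s\circ s^*.\]

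\textbf{Part (1).} Splitting $D^{\prime\prime}s=0$ by type gives $\bar{\partial}^Es=0$ and $\theta s=0$. For holomorphic $s$ the standard computation gives
\[\sqrt{-1}\Lambda\bar{\partial}\partial h(s,s)=\|\partial^Es\|^2\cdot(\textrm{const})-h(\sqrt{-1}\Lambda R(\nabla)s,s).\]
We substitute the vortex equation for $\sqrt{-1}\Lambda R(\nabla)$. This gives
\[h\bigl((\tfrac{\tau}{2}-\tfrac12 ss^*)s,s\bigr)=\tfrac{\tau}{2}\|s\|^2-\tfrac12\|s\|^4,\]
together with the Higgs term $-h(\sqrt{-1}\Lambda[\theta\wedge\theta^*]s,s)$. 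Writing $[\theta\wedge\theta^*]=\theta\wedge\theta^*+\theta^*\wedge\theta$ and using $\theta s=0$, this term reduces to $\pm\|\theta^*s\|^2$ (the $\|\theta s\|^2$ part vanishes). Multiplying by $2$ and fixing the sign convention of $\Delta$ produces $\Delta\|s\|^2+2\|\partial^Es\|^2+2\|\theta^*s\|^2+\|s\|^4-\tau\|s\|^2=0$.

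\textbf{Part (2).} Here we treat $\theta$ as a holomorphic section of $\textrm{End}(E)\otimes\Omega^{1,0}$. The same Bochner formula gives
\[\Delta\|\theta\|^2+2\|\partial\theta\|^2=-2\,\textrm{Re}\,h\bigl(\sqrt{-1}\Lambda R^{\textrm{End}(E)\otimes\Omega^{1,0}}\theta,\theta\bigr).\]
The curvature on $\Omega^{1,0}$ contributes the Ricci term $2\textrm{Ric}(\theta,\theta)$. The curvature on $\textrm{End}(E)$ acts by the commutator $[\sqrt{-1}\Lambda R(\nabla),\theta]$. We insert the vortex equation: the scalar $\tfrac{\tau}{2}\textrm{id}$ commutes with $\theta$ and drops out, leaving
\[-h\bigl([\sqrt{-1}\Lambda[\theta\wedge\theta^*],\theta],\theta\bigr)\quad\textrm{and}\quad\tfrac12 h\bigl([ss^*,\theta],\theta\bigr).\]
The first becomes $\|[\theta\wedge\theta^*]\|^2$ (up to convention-dependent normalization) by the cyclic trace identity $\textrm{tr}([A,\theta]\theta^*)=\textrm{tr}(A[\theta,\theta^*])$ with $A$ Hermitian, as in Simpson's computation for Higgs bundles. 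For the second, $\textrm{tr}(ss^*\theta\theta^*-\theta ss^*\theta^*)=\|s^*\theta\|^2-\|\theta s\|^2=\|s^*\circ\theta\|^2$, again since $\theta s=0$.

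\textbf{Main obstacle.} I expect the hardest step to be the careful bookkeeping of signs and factors in part (2): the form-degree signs in $[\theta\wedge\theta^*]$, the contraction $\Lambda$ acting on mixed $(1,1)$ terms, and matching the constants $2$ with the normalization of $\Delta$ and of the norms. I would first verify everything in local coordinates on a Riemann surface (where $\theta\wedge\theta=0$ automatically and $\Lambda$ is just division by the area form), and then globalize.
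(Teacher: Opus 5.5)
Your plan is the one the paper intends. Its proof only says to adapt He's Weitzenb\"ock identities \cite[Lemma $4.1$]{Siqi}, and unpacking that gives exactly your Bochner computation for the holomorphic sections $s$ of $E$ and $\theta$ of $\textrm{End}(E)\otimes\Omega^{1,0}$. In both, the vortex equation is substituted for $\sqrt{-1}\Lambda R(\nabla_h^E)$ and $\theta s=0$ kills the cross terms. The problem is that the sign and constant bookkeeping you postpone is essentially all of the content, and as written it goes wrong in ways that no convention for $\Delta$ can repair.

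For a holomorphic section $\sigma$ of a Hermitian holomorphic bundle with Chern curvature $R$,
\[
\Delta\|\sigma\|^2+2\|\partial\sigma\|^2=2\,\textrm{Re}\,h\bigl(\sqrt{-1}\Lambda R\,\sigma,\sigma\bigr),\qquad \Delta=2\sqrt{-1}\Lambda\bar{\partial}\partial=d^*d .
\]
As a check, $h=e^{-|z|^2}$, $\sigma=1$, $z=0$ gives $4=4$.

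Both of your displays carry the curvature term with the wrong sign relative to $\|\partial\sigma\|^2$. In (1) this holds once your constant is negative, as your normalization $\Delta\|s\|^2=-2\|\partial^Es\|^2+\cdots$ requires. Reversing the sign of $\Delta$ cannot fix a relative sign. With your formulas, (1) would come out as $\Delta\|s\|^2+2\|\partial^Es\|^2-2\|\theta^*s\|^2-\|s\|^4+\tau\|s\|^2=0$, and in (2) the last three terms would all flip.

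Separately, the two terms you keep in (2) cannot have opposite signs. Both $-\frac12 s\circ s^*$ and $-\sqrt{-1}\Lambda[\theta\wedge\theta^*]$ enter $\sqrt{-1}\Lambda R(\nabla_h^E)$ with the same sign. For Hermitian $A$ one has $h([A,\theta],\theta)=\textrm{tr}\bigl(A\,\sqrt{-1}\Lambda[\theta\wedge\theta^*]\bigr)$. This equals $\|\Lambda[\theta\wedge\theta^*]\|^2$ for $A=\sqrt{-1}\Lambda[\theta\wedge\theta^*]$, and $\|\theta^*s\|^2-\|\theta s\|^2=\|s^*\circ\theta\|^2$ for $A=s\circ s^*$; both are nonnegative.

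With the correct formula, (1) comes out exactly as stated. In (2), however, the $s$-contribution is $2\cdot\frac12\,h([s\circ s^*,\theta],\theta)=\|s^*\circ\theta\|^2$. So with the usual trace inner product on $\textrm{End}(E)$, your computation proves
\[
\Delta\|\theta\|^2+2\|\partial\theta\|^2+2\|[\theta\wedge\theta^*]\|^2+\|s^*\circ\theta\|^2+2\textrm{Ric}(\theta,\theta)=0,
\]
with coefficient $1$, not $2$. The stated $2$ appears only if $\textrm{End}(E)$-valued forms are normed by $2\,\textrm{tr}(AB^*)$ while $s^*\circ\theta$ keeps the vector norm. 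You must fix and state such a convention rather than claim agreement ``up to normalization''. Nothing later depends on this, since only (1) is used, for the properness of $\mu$.

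Finally, for $\dim M>1$ your trace identity gives $\|\Lambda[\theta\wedge\theta^*]\|^2$, not $\|[\theta\wedge\theta^*]\|^2$. Replacing one by the other needs $\theta\wedge\theta=0$ and the Jacobi identity. Writing $\theta=\sum_i\theta_i\,dz_i$ at a point in normal coordinates, $[\theta_i,\theta_j]=0$ gives $\textrm{tr}\bigl([\theta_i,\theta_i^*][\theta_j,\theta_j^*]\bigr)=\|[\theta_i,\theta_j^*]\|^2$. On a Riemann surface this step is vacuous.
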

\begin{proof}
  We can obtain both equations by appropriately modifying \cite[Lemma $4.1$]{Siqi}.
\end{proof}
\section{Moduli Space}
Let $\left(M,g\right)$ be an $n$-dimensional compact K\"{a}hler manifold, $\omega$ be a K\"{a}hler form of $\left(M,g\right)$ and $E$ be a smooth complex vector bundle over $M$. In this section, we study the local moduli of Higgs pairs and of solutions to the $\tau$-vortex equations for Higgs bundles.
\subsection{Local moduli of Higgs pairs}
We set $\mathscr{HP}\left(E\right)$ to be the space of the structures of Higgs pairs in $E$. The gauge group $\mathcal{G}\left(E\right)$ of $E$ acts on $\mathscr{HP}\left(E\right)$ from the right by, for $\left(D^{\prime\prime},s\right)\in\mathscr{HP}\left(E\right)$ and $g\in\mathcal{G}\left(E\right)$,
\[\left(D^{\prime\prime},s\right)\cdot g:=\left(g^{-1}\circ D^{\prime\prime}\circ g,g^{-1}s\right).\]
$\mathscr{M}_{\textrm{HP}}\left(E\right)$ denotes the quotient space $\mathscr{HP}\left(E\right)/\mathcal{G}\left(E\right)$ and is called the moduli space of Higgs pairs.

For $k\in\mathbb{Z}_{\geq 0}$, $\mathscr{C}^k$ denotes the vector space $\Omega^k\bigl(\textrm{End}\left(E\right)\bigr)\oplus\Omega^{k-1}\left(E\right)$. A Higgs pair $\left(D^{\prime\prime},s\right)\in\mathscr{HP}\left(E\right)$ induces the following elliptic complex.
\begin{equation}
  \left(\mathscr{C}^*\right):\xymatrix{0\ar[r]&\mathscr{C}^0\ar[r]^-{d_0}&\mathscr{C}^1\ar[r]^-{d_1}&\mathscr{C}^2\ar[r]^-{d_2}&\cdots\ar[r]^-{d_{2n}}&\mathscr{C}^{2n}\ar[r]^-{d_{2n+1}}&\mathscr{C}^{2n}\ar[r]&0}
\end{equation}
where $d_k:\mathscr{C}^k\to\mathscr{C}^{k+1}$ is defined by, for $\left(\alpha,\eta\right)\in\mathscr{C}^k=\Omega^k\bigl(\textrm{End}\left(E\right)\bigr)\oplus\Omega^{k-1}\left(E\right)$,
\[d_k\begin{pmatrix}
  \alpha\\
  \eta
\end{pmatrix}:=\begin{pmatrix}
  D_{\textrm{End}\left(E\right)}^{\prime\prime}&0\\
  -ev\left(s\right)&-D^{\prime\prime}
\end{pmatrix}\begin{pmatrix}
  \alpha\\
  \eta
\end{pmatrix}.\]
It is easy to check that a complex $\left(\mathscr{C}^*\right)$ is elliptic. For a Higgs pair $\left(D^{\prime\prime},s\right)$, $H_{\left(D^{\prime\prime},s\right)}^k\left(\mathscr{C}^*\right)$ denotes the $k$-th cohomology group of the complex $\left(\mathscr{C}^*\right)$ induced by $\left(D^{\prime\prime},s\right)$. $\widetilde{\mathscr{C}}^k$ denotes the holomorphic vector bundles $\left(\textrm{End}\left(E\right)\otimes\Lambda^{k,0}M\right)\oplus\left(E\otimes\Lambda^{k-1,0}M\right)$. The $k$-th cohomology group $H_{\left(D^{\prime\prime},s\right)}^k\left(\mathscr{C}^*\right)$ coincides with the $k$-th hypercohomology group of the following complex.
\begin{equation}
  \left(\widetilde{\mathscr{C}}^*\right):\xymatrix{0\ar[r]&\widetilde{\mathscr{C}}^0\ar[r]^-{\widetilde{d}_0}&\widetilde{\mathscr{C}}^1\ar[r]^-{\widetilde{d}_1}&\widetilde{\mathscr{C}}^2\ar[r]^-{\widetilde{d}_2}&\cdots\ar[r]^-{\widetilde{d}_n}&\widetilde{\mathscr{C}}^n\ar[r]^-{\widetilde{d}_{n+1}}&\widetilde{\mathscr{C}}^{n+1}\ar[r]&0,}
\end{equation}
where $\widetilde{d}_k$ is defined by, for $\left(\widetilde{\alpha},\widetilde{\eta}\right)\in\widetilde{\mathscr{C}}^k=\Omega^{k,0}\bigl(\textrm{End}\left(E\right)\bigr)\oplus\Omega^{k-1}\left(E\right)$,
\[\widetilde{d}_k\begin{pmatrix}
  \widetilde{\alpha}\\
  \widetilde{\eta}
\end{pmatrix}:=\begin{pmatrix}
  \theta^{\textrm{End}\left(E\right)}\wedge&0\\
  -ev\left(s\right)&-\theta\wedge
\end{pmatrix}\begin{pmatrix}
  \widetilde{\alpha}\\
  \widetilde{\eta}
\end{pmatrix}.\]
\subsection{Local moduli of solutions to the vortex equations for Higgs bundles}
Let $h$ be a Hermitian metric on $E$. For a real number $\tau$, we define a set $\mathscr{V}\left(E,h\right)_\tau$ by
\begin{equation}\label{eq:solution space}
  \mathscr{V}\left(E,h\right)_\tau:=\left\{\left(D,s\right)\middle|\left(D^{\prime\prime},s\right)\in\mathscr{HP}\left(E\right),\sqrt{-1}\Lambda R\left(D\right)+\frac{1}{2}s\circ s^*=\frac{\tau}{2}\textrm{id}_E\right\}
\end{equation}
this space can be regarded as the solution space of the $\tau$-vortex equation for Higgs bundle. The gauge group $\mathcal{G}\left(E,h\right)$ of $\left(E,h\right)$ acts on $\mathscr{V}\left(E,h\right)_\tau$ by, for $\left(D,s\right)\in\mathscr{V}\left(E,h\right)_\tau$ and $g\in\mathcal{G}\left(E,h\right)$,
\[\left(D,s\right)\cdot g:=\left(g^{-1}\circ D\circ g,g^{-1}s\right).\]
$\mathscr{M}_\tau\left(E,h\right)$ denotes the quotient space $\mathscr{V}\left(E,h\right)/\mathcal{G}\left(E,h\right)$ and is called the moduli space of solutions to the $\tau$-vortex equations for Higgs bundles.

For $k\in\mathbb{Z}_{\geq 0}$, $\mathscr{B}^k$ denotes the vector space $\Omega^k\bigl(\textrm{Herm}_{\textrm{skew}}\left(E,h\right)\bigr)$. A solution $\left(D,s\right)\in\mathscr{V}\left(E,h\right)$ induces the following elliptic complex.
\begin{equation}
  \left(\mathscr{B}^*\right):\xymatrix{0\ar[r]&\mathscr{B}^0\ar[r]^-{D_0}&\mathscr{C}^1\ar[r]^-{D_1}&\mathscr{B}^0\oplus\mathscr{C}^2\ar[r]^-{D_2}&\mathscr{C}^3\ar[r]^{D_3}&\cdots\ar[r]^-{D_{2n}}&\mathscr{C}^{2n}\ar[r]&0}
\end{equation}
where $D_k$ are defined by
\begin{align}
  &D_0\left(g\right):=\begin{pmatrix}
    D_{\textrm{End}\left(E\right)}g\\
    -g\left(s\right)
  \end{pmatrix},\,\textrm{for}\,\,g\in\mathscr{B}^0,\\
  &D_1\begin{pmatrix}
    \alpha\\
    \eta
  \end{pmatrix}:=\begin{pmatrix}
    \Lambda\left(\nabla^{\textrm{End}\left(E\right)}\beta+\Theta^{\textrm{End}\left(E\right)}\wedge\gamma\right)-\sqrt{-1}\left(s\circ\eta^*+\eta\circ s^*\right)\\
    D_{\textrm{End}\left(E\right)}^{\prime\prime}\left(\beta^{\prime\prime}+\gamma^\prime\right)\\
    -\left(\beta^{\prime\prime}+\gamma^\prime\right)s-D^{\prime\prime}\eta
  \end{pmatrix},\\
  &\quad\textrm{for}\,\,\left(\alpha,\eta\right)=\left(\beta+\gamma,\eta\right)\in\mathscr{C}^1=\Omega^1\bigl(\textrm{Herm}_{\textrm{skew}}\left(E,h\right)\bigr)\oplus\Omega^1\bigl(\textrm{Herm}\left(E,h\right)\bigr)\oplus\Omega^0\left(E\right),\\
  &D_2\begin{pmatrix}
    g\\
    \alpha\\
    \eta
  \end{pmatrix}:=d_2\begin{pmatrix}
    \alpha\\
    \eta
  \end{pmatrix},\,\textrm{for}\,\,\left(g,\alpha,\eta\right)\in\mathscr{B}^0\oplus\mathscr{C}^2,\\
  &D_k:=d_k\,\textrm{for}\,\,k\geq 3.
\end{align}
A complex $\left(\mathscr{B}^*\right)$ is also elliptic and this can be shown by a straightforward calculation. For a solution $\left(D,s\right)$, $H_{\left(D,s\right)}^k\left(\mathscr{B}^*\right)$ denotes the $k$-th cohomology group of the complex $\left(\mathscr{B}^*\right)$ induced by $\left(D,s\right)$.

We equip $\mathscr{C}^k, \mathscr{B}^0$ and $\mathscr{B}^0\oplus\mathscr{C}^2$ with the $L^2$-real inner products induced by a Hermitian metric $h$ and a K\"{a}hler metric $g$. Using these, we identify the cohomology groups $H^k\left(\mathscr{C}^*\right)$ and $H^k\left(\mathscr{B}^*\right)$ with the spaces of harmonic forms.

Next, for $\left(D,s\right)\in\mathscr{V}\left(E,h\right)$, we discuss the relationship between the cohomology groups $H_{\left(D^{\prime\prime},s\right)}^k\left(\mathscr{C}^*\right)$ and $H_{\left(D,s\right)}^k\left(\mathscr{B}^*\right)$ for $k\in\left\{0,2\right\}$. Before proceeding, we note that the following lemma holds.
\begin{lemma} \label{D_1^* representation}
  \textit{For $\left(f,\alpha,\eta\right)\in B^0\oplus\mathscr{C}^2$, the following holds.
  \begin{align}
    D_1^*\begin{pmatrix}
      f\\
      \alpha\\
      \eta
    \end{pmatrix}=\begin{pmatrix}
      \displaystyle D_{\textrm{End}\left(E\right)}^*\left(f\omega\right)+\frac{1}{2}P^{-1}\left(D_{\textrm{End}\left(E\right)}^{\prime\prime}\right)^*\alpha-\frac{1}{2}P^{-1}\left(\eta\circ s^*\right)\\
      2\sqrt{-1}f\left(s\right)-\left(D_{\textrm{End}\left(E\right)}^{\prime\prime}\right)^*\eta
    \end{pmatrix}
  \end{align}
  where $P:\Omega^1\bigl(\textrm{End}\left(E\right)\bigr)\to\Omega^1\bigl(\textrm{End}\left(E\right)\bigr)$ is defined by, for $\alpha\in\Omega^1\bigl(\textrm{End}\left(E\right)\bigr)$,
  \[P\left(\alpha\right)=\beta^{\prime\prime}+\gamma^\prime.\]}
\end{lemma}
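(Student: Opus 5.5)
The plan is to compute the formal adjoint of $D_1$ directly from the defining identity
\[
\left\langle D_1\begin{pmatrix}\alpha\\ \eta\end{pmatrix},\begin{pmatrix}f\\ \alpha^\prime\\ \eta^\prime\end{pmatrix}\right\rangle_{L^2}=\left\langle\begin{pmatrix}\alpha\\ \eta\end{pmatrix},D_1^*\begin{pmatrix}f\\ \alpha^\prime\\ \eta^\prime\end{pmatrix}\right\rangle_{L^2}.
\]
Here all inner products are the real $L^2$ inner products $\mathrm{Re}\int_M\mathrm{tr}(\cdot\circ\cdot^*)$ induced by $h$ and $g$. I will split the left-hand side into its three blocks and move every operator across, one term at a time.

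\emph{First block.} The first block is $\langle\Lambda(\nabla^{\mathrm{End}(E)}\beta+\Theta^{\mathrm{End}(E)}\wedge\gamma),f\rangle$.
\begin{itemize}
\item First I will check that, for $\alpha=\beta+\gamma$ with $\beta$ skew-Hermitian and $\gamma$ Hermitian, the expression $\Lambda(\nabla\beta+\Theta\wedge\gamma)$ is the skew-Hermitian part of $\Lambda D_{\mathrm{End}(E)}\alpha$ (pairing with a skew-Hermitian $f$).
\item Since $\Lambda$ is the pointwise adjoint of $L=\omega\wedge$, this term equals $\langle D_{\mathrm{End}(E)}\alpha,f\omega\rangle$, which is $\langle\alpha,D_{\mathrm{End}(E)}^*(f\omega)\rangle$.
\end{itemize}
The $\eta$-contribution of the first block is $\langle-\sqrt{-1}(s\circ\eta^*+\eta\circ s^*),f\rangle$. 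I will evaluate it with cyclicity of the trace and $f^*=-f$. It should reduce to $2\,\mathrm{Re}\,h(\eta,\sqrt{-1}f(s))$, i.e. $\langle\eta,2\sqrt{-1}f(s)\rangle$, which gives the first term of the second row.

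\emph{Second and third blocks.} These involve the real-linear map $P(\beta+\gamma)=\beta^{\prime\prime}+\gamma^\prime$. The key pointwise fact I will prove is that $P$ is a real-linear isomorphism from $\Omega^1(\mathrm{Herm}_{\mathrm{skew}})\oplus\Omega^1(\mathrm{Herm})$ onto $\Omega^1(\mathrm{End}(E))$ that scales norms by $1/\sqrt2$. For skew-Hermitian $\beta$ one has $\beta^\prime=-(\beta^{\prime\prime})^*$, so $|\beta^{\prime\prime}|^2=\frac12|\beta|^2$, and similarly for $\gamma$. Polarizing gives $\langle P\alpha,\kappa\rangle=\frac12\langle\alpha,P^{-1}\kappa\rangle$ for every $\kappa\in\Omega^1(\mathrm{End}(E))$, i.e. $P^*=\frac12P^{-1}$.

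With this in hand, the second block becomes
\[
\langle D^{\prime\prime}_{\mathrm{End}(E)}P\alpha,\alpha^\prime\rangle=\tfrac12\langle\alpha,P^{-1}(D^{\prime\prime}_{\mathrm{End}(E)})^*\alpha^\prime\rangle .
\]
The third block has two pieces:
\begin{itemize}
\item $\langle-(P\alpha)s,\eta^\prime\rangle=-\langle P\alpha,\eta^\prime\circ s^*\rangle=-\frac12\langle\alpha,P^{-1}(\eta^\prime\circ s^*)\rangle$;
\item $\langle-D^{\prime\prime}\eta,\eta^\prime\rangle=-\langle\eta,(D^{\prime\prime})^*\eta^\prime\rangle$.
\end{itemize}
Collecting the $\alpha$- and $\eta$-components then yields exactly the stated formula (reading the last term as the adjoint of $D^{\prime\prime}$ on $E$).

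The main obstacle is the bookkeeping of real versus Hermitian structures. Specifically, I must verify the identification of $\Lambda(\nabla\beta+\Theta\wedge\gamma)$ with the relevant part of $\Lambda D_{\mathrm{End}(E)}\alpha$ when paired against skew-Hermitian $f$. I must also confirm the factor $\frac12$ and the sign in $P^*=\frac12P^{-1}$, and the factor $2\sqrt{-1}$ in the $f(s)$ term. I would first check these pointwise in a unitary frame with $M$ of dimension one, then extend using the type decomposition.
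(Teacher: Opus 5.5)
Your proposal is correct and is the direct computation the paper alludes to when it omits the proof. You split $\langle D_1(\alpha,\eta),(f,\alpha',\eta')\rangle$ block by block and use $\Lambda=L^*$, the vanishing of the real pairing between the Hermitian and skew-Hermitian parts of $\Lambda D_{\textrm{End}(E)}\alpha$, the identity $\langle -\sqrt{-1}(s\eta^*+\eta s^*),f\rangle=\langle\eta,2\sqrt{-1}f(s)\rangle$, and $P^*=\tfrac12P^{-1}$; this gives exactly the stated formula. The pointwise checks you flag go through as you predict, and reading the last term as $(D^{\prime\prime})^*\eta$ on $E$ is correct.
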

Since we can prove this by a direct computation, we omit the proof.
\begin{proposition}\label{prop:cohomology_relationship}\ 

  \begin{enumerate}[$\left(1\right)$, leftmargin=20pt]
    \item \textit{$H_{\left(D^{\prime\prime},s\right)}^0\left(\mathscr{B}^*\right)\otimes\mathbb{C}\simeq H_{\left(D,s\right)}^0\left(\mathscr{C}^*\right)$.}
    \item \textit{$H_{\left(D,s\right)}^2\left(\mathscr{B}^*\right)\simeq H_{\left(D,s\right)}^0\left(\mathscr{B}^*\right)\oplus H_{\left(D^{\prime\prime},s\right)}^2\left(\mathscr{C}^*\right)$.}
  \end{enumerate}
\end{proposition}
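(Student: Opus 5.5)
The plan is to identify every cohomology group with its space of harmonic representatives, using the $L^2$ inner products fixed above, and then compare the two complexes degree by degree.

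\textbf{Part (1): degree zero.} Here $H^0\left(\mathscr{C}^*\right)=\ker d_0$ consists of $\alpha\in\Omega^0\bigl(\textrm{End}\left(E\right)\bigr)$ with $D_{\textrm{End}\left(E\right)}^{\prime\prime}\alpha=0$ and $\alpha\left(s\right)=0$. Likewise, $H^0\left(\mathscr{B}^*\right)=\ker D_0$ consists of skew-Hermitian $g$ with $D_{\textrm{End}\left(E\right)}g=0$ and $g\left(s\right)=0$.

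The map $H^0\left(\mathscr{B}^*\right)\otimes\mathbb{C}\to H^0\left(\mathscr{C}^*\right)$ is $g_1+\sqrt{-1}g_2\mapsto g_1+\sqrt{-1}g_2$. It is well defined because $D_{\textrm{End}\left(E\right)}g_i=0$ implies $D^{\prime\prime}_{\textrm{End}\left(E\right)}g_i=0$.

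For surjectivity, take $\alpha\in\ker d_0$. Write $\alpha=g_1+\sqrt{-1}g_2$ with $g_i$ skew-Hermitian; it suffices to show $D_{\textrm{End}\left(E\right)}\alpha=0$ and $\alpha^*\left(s\right)=0$.
\begin{itemize}
\item I would use the Weitzenböck/Kähler-identity argument, as in the proof that stable objects are simple. From $D^{\prime\prime}\alpha=0$ one gets $D^{\prime}\alpha^*=0$ after taking adjoints.
\item Pair $\sqrt{-1}\Lambda\bigl[R\left(D\right),\alpha\bigr]$ with $\alpha$. Substitute the vortex equation $\sqrt{-1}\Lambda R\left(D\right)=\frac{\tau}{2}\textrm{id}-\frac12 s\circ s^*$ and integrate by parts.
\item This yields $\left\|D^\prime\alpha\right\|^2+\frac12\left\|\alpha^*s\right\|^2\cdot(\textrm{const})=0$, using $\alpha s=0$.
\end{itemize}
Hence $D\alpha=0$ and $\alpha^*s=0$, so each $g_i$ lies in $H^0\left(\mathscr{B}^*\right)$. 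Injectivity is immediate. The delicate point is getting the signs right in $\left\langle\Lambda\left[s s^*,\alpha\right],\alpha\right\rangle=\left\|\alpha^* s\right\|^2-\left\|\alpha s\right\|^2$ (up to a positive factor).

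\textbf{Part (2): degree two.} Harmonic elements of $H^2\left(\mathscr{B}^*\right)$ are triples $\left(f,\alpha,\eta\right)\in\mathscr{B}^0\oplus\mathscr{C}^2$ with $D_1^*\left(f,\alpha,\eta\right)=0$ and $d_2\left(\alpha,\eta\right)=0$. By Lemma \ref{D_1^* representation}, the first condition splits into
\begin{align}
&D_{\textrm{End}\left(E\right)}^*\left(f\omega\right)+\tfrac12P^{-1}\Bigl(\bigl(D_{\textrm{End}\left(E\right)}^{\prime\prime}\bigr)^*\alpha-\eta\circ s^*\Bigr)=0,\\
&2\sqrt{-1}f\left(s\right)-\bigl(D^{\prime\prime}\bigr)^*\eta=0.
\end{align}

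The key step is to decouple these equations.
\begin{itemize}
\item $D^*\left(f\omega\right)$ is, via the Kähler identities, essentially $J\,Df$ up to sign. Pair the first equation with $P\bigl(D^*\left(f\omega\right)\bigr)$, or equivalently with $D_0\left(f\right)$, and pair the second with $f\left(s\right)$.
\item Use that $\left(\alpha,\eta\right)\in\ker d_2$ is $d_1$-closed. The cross terms $\left\langle d_1^*\left(\alpha,\eta\right),\ast\right\rangle$ then vanish, because they pair a $d_1$-coclosed term against something in the image of $d_0$ of a function; here one uses $P^{-1}$ to translate.
\item This gives $\left\|Df\right\|^2+c\left\|f\left(s\right)\right\|^2=0$, so $f\in H^0\left(\mathscr{B}^*\right)$.
\item Consequently $d_1^*\left(\alpha,\eta\right)=0$, so $\left(\alpha,\eta\right)$ is $d$-harmonic and defines a class in $H^2\left(\mathscr{C}^*\right)$.
\end{itemize}
Conversely, any $f\in H^0\left(\mathscr{B}^*\right)$ paired with any $d$-harmonic $\left(\alpha,\eta\right)$ satisfies both equations, since the $f$-terms and the $\left(\alpha,\eta\right)$-terms vanish separately. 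This gives the direct sum decomposition.

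\textbf{Main obstacle.} I expect the hard part to be verifying that the cross terms vanish, i.e. that $\left\langle P^{-1}d_1^*\left(\alpha,\eta\right),D^*\left(f\omega\right)\right\rangle$ reduces to something involving $d_1 d_0$-type compositions. This needs the precise forms of $P$ and of the Kähler identities $\left[\Lambda,D^{\prime\prime}\right]=-\sqrt{-1}\left(D^{\prime}\right)^*$ for the Hitchin–Simpson operators. I would first carry this out in the case $\theta=0$ (Bradlow's pairs) and then insert the $\Theta$-terms.
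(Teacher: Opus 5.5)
Your proposal is correct in substance. Part (2) is the paper's argument; Part (1) takes a different and sounder route than the paper.

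\textbf{Part (1).} Both arguments start from $\sqrt{-1}\Lambda D^{\prime\prime}D^{\prime}g=\bigl[\sqrt{-1}\Lambda R(D),g\bigr]=-\tfrac12\,s\circ s^*\circ g$. The paper then tries to get $g^*\circ s\circ s^*=0$ pointwise, by comparing this with $\sqrt{-1}\Lambda D^{\prime}D^{\prime\prime}g^*=\tfrac12\,g^*\circ s\circ s^*$. That comparison carries no information:
\begin{itemize}
\item Since $\sqrt{-1}\Lambda R(D)$ is Hermitian, $\bigl(\sqrt{-1}\Lambda D^{\prime\prime}D^{\prime}g\bigr)^*=-\sqrt{-1}\Lambda D^{\prime}D^{\prime\prime}g^*$.
\item So the two identities are adjoints of one another, and the displayed chain ending in $\tfrac12 g^*ss^*=-\tfrac12 g^*ss^*$ rests on a sign slip.
\item Even granting $g^*(s)=0$, the paper's subsequent ``$D^{\prime}g=0$'' still needs an integration by parts that it does not supply.
\end{itemize}
Your step is the one that works. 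You use the K\"{a}hler identity $\sqrt{-1}\Lambda D^{\prime\prime}D^{\prime}=(D^{\prime})^*D^{\prime}$ on $\ker D^{\prime\prime}$, pair with $g$ and integrate, getting $\left\|D^{\prime}g\right\|^2+\tfrac12\left\|g^*s\right\|^2=0$. This gives $D^{\prime}g=0$ and $g^*(s)=0$ at once, and it is exactly the global step the pointwise comparison cannot supply. You were right to single out the sign of the K\"{a}hler identity; with the paper's normalization ($c=2\pi\mu/\textrm{Vol}$) it comes out as needed. To conclude $g_1,g_2\in H^0(\mathscr{B}^*)$ you also need $Dg^*=0$. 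This follows from $D^{\prime}g=D^{\prime\prime}g=0$ by taking adjoints type by type.

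\textbf{Part (2).} The paper uses $P\bigl(D^*(f\omega)\bigr)=-\sqrt{-1}D^{\prime\prime}_{\textrm{End}(E)}f$ and pairs the second equation with $f(s)$. It moves $(D^{\prime\prime})^*$ across using $D^{\prime\prime}s=0$, substitutes the first equation, and kills the $\alpha$-term by $D^{\prime\prime}\circ D^{\prime\prime}=0$, arriving at $\left\|f(s)\right\|^2=-\left\|D^{\prime\prime}f\right\|^2$. Compactly: $D_1^*(f,\alpha,\eta)=0$ says exactly $d_1^*(\alpha,\eta)=2\sqrt{-1}\,d_0f$, and pairing with $d_0f$ gives $\left\|d_0f\right\|^2=0$. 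Two points in your sketch need correcting.
\begin{itemize}
\item The cross terms vanish because $d_1\circ d_0=0$, which uses $D^{\prime\prime}\circ D^{\prime\prime}=0$ and $D^{\prime\prime}s=0$. They do not vanish because $(\alpha,\eta)$ is closed.
\item Pairing with $D_0(f)$ is \emph{not} equivalent to pairing with $P\bigl(D^*(f\omega)\bigr)$. For skew-Hermitian $f$ one has $P(Df)=D^{\prime\prime}f$, so $D^*(f\omega)=-P^{-1}\bigl(\sqrt{-1}\,P(Df)\bigr)$ is $Df$ rotated by the complex structure. Since $P$ is conformal, $\langle D^*(f\omega),Df\rangle_{\mathbb{R}}=0$; likewise $\langle 2\sqrt{-1}f(s),f(s)\rangle_{\mathbb{R}}=0$. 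So that pairing yields nothing.
\end{itemize}
The test vector must be the rotated one, which after applying $P$ is $d_0(\sqrt{-1}f)$ up to sign. It lies in the complexified complex $\mathscr{C}^*$ but not in the image of $D_0$, since $\sqrt{-1}f$ is Hermitian. Equivalently, work with the Hermitian $L^2$ product, as the paper implicitly does.
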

\begin{proof}\ 
  
  \begin{enumerate}[$\left(1\right)$, leftmargin=20pt]
    \item For $f_1,f_2\in H_{\left(D,s\right)}^0\left(\mathscr{B}^*\right)$, it is clear that $D_{\textrm{End}\left(E\right)}^{\prime\prime}f_1=D_{\textrm{End}\left(E\right)}^{\prime\prime}f_2=0$ holds. Thus we have $f_1+\sqrt{-1}f_2\in H_{\left(D^{\prime\prime},s\right)}^0\left(\mathscr{C}^*\right)$. Conversely, let $g\in H_{\left(D^{\prime\prime},s\right)}^0\left(\mathscr{C}^*\right)$ and decompose it as
    \[g=g_1+\sqrt{-1}g_2,\quad g_1,g_2\in \mathscr{B}^0.\]
    Since we have $D_{\textrm{End}\left(E\right)}^{\prime\prime}g=0$, $D_{\textrm{End}\left(E\right)}^\prime g^*=0$ holds. Thus we have
    \begin{align}
      \sqrt{-1}\Lambda D_{\textrm{End}\left(E\right)}^{\prime\prime}D_{\textrm{End}\left(E\right)}^\prime g=&\sqrt{-1}\Lambda R\left(D_{\textrm{End}\left(E\right)}\right)g\\
      =&\sqrt{-1}\Lambda R\left(D\right)\circ g-g\circ\sqrt{-1}\Lambda R\left(D\right)\\
      =&\frac{\tau}{2}\textrm{id}_E\circ g-\frac{1}{2}s\circ s^*\circ g-g\circ\frac{\tau}{2}\textrm{id}_E+\frac{1}{2}g\circ s\circ s^*\\
      =&-\frac{1}{2}s\circ s^*\circ g.
    \end{align}
    and similarly,
    \[\sqrt{-1}\Lambda D_{\textrm{End}\left(E\right)}^\prime D_{\textrm{End}\left(E\right)}^{\prime\prime}g^*=\frac{1}{2}g^*\circ s\circ s^*.\]
    Now, we have
    \[\frac{1}{2}g^*\circ s\circ s^*=-\left(\frac{1}{2}s\circ s^*\circ g\right)^*=-\left(\sqrt{-1}\Lambda R\left(D_{\textrm{End}\left(E\right)}\right)g\right)^*=-\frac{1}{2}g^*\circ s\circ s^*.\]
    Therefore $g^*\circ s\circ s^*=0$ holds. By composing both sides with $g$ on the left and taking the trace, we obtain $g^*\left(s\right)=0$. Hence $D_{\textrm{End}\left(E\right)}^\prime g=D_{\textrm{End}\left(E\right)}^{\prime\prime}g^*=0$ holds and consequently, $g_1,g_2\in H_{\left(D,s\right)}^0\left(\mathscr{B}^*\right)$ holds.
    \item By Lemma \ref{D_1^* representation}, for $\left(f,\alpha,\eta\right)\in H_{\left(D,s\right)}^2\left(\mathscr{B}^*\right)$, we have
    \begin{align}
      D_1^*\left(f,\alpha,\eta\right)=0\overset{\textrm{iff}}{\Longleftrightarrow}&\left\{\begin{aligned}
        &P\bigl(D^*\left(f\omega\right)\bigr)+\frac{1}{2}\left(D_{\textrm{End}\left(E\right)}^{\prime\prime}\right)^*\alpha-\frac{1}{2}\eta\circ s^*=0,\\
        &2\sqrt{-1}f\left(s\right)-\left(D^{\prime\prime}\right)^*\eta=0
      \end{aligned}\right.\\
      \overset{\textrm{iff}}{\Longleftrightarrow}&\left\{\begin{aligned} \label{paraphrase}
        &-\sqrt{-1}D_{\textrm{End}\left(E\right)}^{\prime\prime}f+\frac{1}{2}\left(D_{\textrm{End}\left(E\right)}^{\prime\prime}\right)^*\alpha-\frac{1}{2}\eta\circ s^*=0,\\
        &2\sqrt{-1}f\left(s\right)-\left(D^{\prime\prime}\right)^*\eta=0.
      \end{aligned}\right.
    \end{align}
    Therfore the square of the $L^2$-norm of $f\left(s\right)\in\Omega^0\left(E\right)$ is given by the following.
    \begin{align}
      \langle f\left(s\right),f\left(s\right)\rangle_{L^2\left(M\right)}=&\langle\frac{1}{2\sqrt{-1}}\left(D^{\prime\prime}\right)^*\eta,f\left(s\right)\rangle_{L^2\left(M\right)}\\
      =&\langle\frac{1}{2\sqrt{-1}}\eta,\left(D_{\textrm{End}\left(E\right)}^{\prime\prime}f\right)\circ s\rangle_{L^2\left(M\right)}\\
      =&\langle\frac{1}{2\sqrt{-1}}\eta\circ s^*,D_{\textrm{End}\left(E\right)}^{\prime\prime}f\rangle_{L^2\left(M\right)}\\
      =&-\langle D_{\textrm{End}\left(E\right)}^{\prime\prime}f,D_{\textrm{End}\left(E\right)}^{\prime\prime}f\rangle_{L^2\left(M\right)}.
    \end{align}
    Hence we obtain $f\left(s\right)=0$ and $D_{\textrm{End}\left(E\right)}^{\prime\prime}f=0$ and consequently $f\in H_{\left(D,s\right)}^0\left(\mathscr{B}^*\right)$ and $\left(\alpha,\eta\right)\in H_{\left(D^{\prime\prime},s\right)}^2\left(\mathscr{C}^*\right)$ holds. This completes the proof.
  \end{enumerate}
\end{proof}
\subsection{The relationship between the moduli space of stable Higgs pairs and the moduli space of solutions to the vortex equations for Higgs bundles}
$\textrm{Herm}^+\left(E\right)$ denotes the space of Hermitian metrics on $E$. The gauge group $\mathcal{G}\left(E\right)$ of $E$ acts on $\textrm{Herm}^+\left(E\right)$ from the right by, for $h\in\textrm{Herm}^+\left(E\right)$ and $g\in\mathcal{G}\left(E\right)$,
\[\left(h\cdot g\right)\left(\cdot,\cdot\right):=h\bigl(g\left(\cdot\right),g\left(\cdot\right)\bigr).\]

For a structure of Higgs bundle $D^{\prime\prime}$ in E and a Hermitian metric $h$ on $E$, $D_{D^{\prime\prime},h}$ denotes the HS connection of a Hermitian Higgs bundle $\left(E,D^{\prime\prime},h\right)$ induced by $D^{\prime\prime}$ and $h$. When we fix a Hermitian metric $h$, $\mathcal{G}\left(E\right)$ also acts on the space of HS connections $\mathscr{H}\left(E,h\right)$ from the right by, for $D_{D^{\prime\prime},h}\in\mathscr{H}\left(E,h\right)$ and $g\in\mathcal{G}\left(E\right)$,
\[D_{D^{\prime\prime},h}\cdot g:=g^*\circ D^\prime\circ \left(g^*\right)^{-1}+g^{-1}\circ D^{\prime\prime}\circ g.\]
By a straightforward computation, we obtain the following proposition.
\begin{proposition}
  \textit{Fix a Hermitian metric $h$ on $E$. For a connection $D_{D^{\prime\prime},h}\in\mathscr{H}\left(E,h\right)$, a section $s\in\Omega^0\left(E\right)$ satisfying $D^{\prime\prime}s=0$ and gauge transformation $g\in\mathcal{G}\left(E\right)$, the following are equivalent.
  \begin{enumerate}
    \item $\left(D_{D^{\prime\prime},h}\cdot g,s\cdot g\right)\in\mathscr{V}_\tau\left(E,h\right)$.
    \item $\left(D_{D^{\prime\prime},h},s\right)\in\mathscr{V}_\tau\left(E,h\cdot g^{-1}\right)$.
  \end{enumerate}}
\end{proposition}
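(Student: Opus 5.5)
The plan is to reduce the statement to the gauge-covariance of the Hitchin--Simpson connection. Write $g\cdot$ for the action of $\mathcal{G}(E)$ and set $h^\prime:=h\cdot g^{-1}$, so that $h^\prime(u,v)=h\bigl(g^{-1}u,g^{-1}v\bigr)$. Set $\widetilde{D}^{\prime\prime}:=g^{-1}\circ D^{\prime\prime}\circ g$ and $\widetilde{s}:=s\cdot g=g^{-1}s$.

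\textbf{Step 1.} Check that $(\widetilde{D}^{\prime\prime},\widetilde{s})$ is again a Higgs pair; this holds because $\widetilde{D}^{\prime\prime}\widetilde{s}=g^{-1}D^{\prime\prime}s=0$.

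\textbf{Step 2.} Identify $D_{D^{\prime\prime},h}\cdot g$ with the HS connection $D_{\widetilde{D}^{\prime\prime},h}$, and show that
\[D_{\widetilde{D}^{\prime\prime},h}=g^{-1}\circ D_{D^{\prime\prime},h^\prime}\circ g.\]
The point is that $g:(E,h)\to(E,h^\prime)$ is an isometry intertwining $\widetilde{D}^{\prime\prime}$ with $D^{\prime\prime}$. The $D^\prime$-part of an HS connection is uniquely determined by $D^{\prime\prime}$ and the metric through the compatibility condition
\[\partial h(u,v)=h(D^\prime u,v)+h(u,D^{\prime\prime}v)\]
(with the appropriate $\theta^*$ convention). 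Transporting by the isometry $g$ therefore gives
\[D^\prime_{\widetilde{D}^{\prime\prime},h}=g^{-1}\circ D^\prime_{D^{\prime\prime},h^\prime}\circ g.\]
I would check this by pulling back the defining identity through $g$ rather than expanding $\partial_h$ and $\theta^*_h$ separately. This is also where the formula $g^*\circ D^\prime\circ(g^*)^{-1}$ in the definition of the action has to be reconciled with the conjugation $g^{-1}\circ D^\prime_{h^\prime}\circ g$. The two agree because
\[D^\prime_{h^\prime}=(gg^*)\circ D^\prime_h\circ(gg^*)^{-1},\]
where $g^*$ is the $h$-adjoint; indeed, in terms of $h$ one has $h^\prime(u,v)=h\bigl((gg^*)^{-1}u,v\bigr)$.

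\textbf{Step 3.} Conclude by conjugating the curvature and the quadratic term. It follows from Step 2 that
\[R\bigl(D_{D^{\prime\prime},h}\cdot g\bigr)=g^{-1}\circ R\bigl(D_{D^{\prime\prime},h^\prime}\bigr)\circ g.\]
Since $g$ is an isometry from $h$ to $h^\prime$, the adjoints satisfy $\widetilde{s}^{*_h}=s^{*_{h^\prime}}\circ g$, and hence
\[\widetilde{s}\circ\widetilde{s}^{*_h}=g^{-1}\circ\bigl(s\circ s^{*_{h^\prime}}\bigr)\circ g.\]
The whole vortex expression $\sqrt{-1}\Lambda R+\tfrac12 s\circ s^*-\tfrac{\tau}{2}\textrm{id}_E$ for $(D_{D^{\prime\prime},h}\cdot g,\widetilde{s})$ with respect to $h$ is therefore $g^{-1}\circ(\cdot)\circ g$ applied to the same expression for $(D_{D^{\prime\prime},h^\prime},s)$ with respect to $h^\prime$. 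One vanishes if and only if the other does, which gives $(1)\Leftrightarrow(2)$, since membership in $\mathscr{V}_\tau$ is exactly the vanishing of this expression together with the Higgs-pair condition from Step 1.

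The main obstacle is Step 2: getting the conventions consistent (left versus right action, whether $h\cdot g^{-1}$ or $h\cdot g$, and which adjoint appears in $g^*\circ D^\prime\circ(g^*)^{-1}$). I would first verify everything on a rank-one example with constant $g$, where each side reduces to a scalar check, and then run the general isometry-transport argument above.
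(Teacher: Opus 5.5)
Your argument is correct and is essentially the ``straightforward computation'' that the paper invokes without writing it out. Transporting along the isometry $g\colon (E,h)\to(E,h\cdot g^{-1})$ gives $D_{D^{\prime\prime},h}\cdot g=g^{-1}\circ D_{D^{\prime\prime},h\cdot g^{-1}}\circ g$, where your identity $D^{\prime}_{h\cdot g^{-1}}=(gg^*)\circ D^{\prime}_h\circ(gg^*)^{-1}$ is what reconciles this with the $g^*\circ D^{\prime}\circ(g^*)^{-1}$ in the definition of the action; together with $(g^{-1}s)\circ(g^{-1}s)^{*_h}=g^{-1}\circ s\circ s^{*_{h\cdot g^{-1}}}\circ g$, this makes the two vortex expressions conjugate by $g$. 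As you implicitly do, the connection in $(2)$ should be read as the HS connection of $D^{\prime\prime}$ with respect to $h\cdot g^{-1}$ rather than $h$, since that is the metric defining $\mathscr{V}_\tau(E,h\cdot g^{-1})$.
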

Fix a Hermitian metric $h$ on $E$. The space of solutions to the $\tau$-vortex equations for Higgs bundles $\mathscr{V}_\tau\left(E,h\right)$ is given by \eqref{eq:solution space}. By the Kobayashi-Hitchin correspondence, the space of $\tau$-stable Higgs pairs can be identified with the following space.
\begin{equation}
  \mathscr{HP}^{\textrm{st}}\left(E\right)=\left\{\left(D^{\prime\prime},s\right)\in\mathscr{HP}\left(E\right)\middle|\begin{gathered}
    \sqrt{-1}\Lambda R\left(D_{D^{\prime\prime},H}\right)+\frac{1}{2}s\circ s_H^*=\frac{\tau}{2}\textrm{id}_E \label{eq:vortex}\\
    \textrm{has a unique solution}\,H\in\textrm{Herm}^+\left(E\right).
  \end{gathered}\right\}
\end{equation}
Then $\mathscr{M}_{\textrm{HP}}^{\textrm{st}}\left(E\right)$ denotes the quotient space $\mathscr{HP}^{\textrm{st}}\left(E\right)/\mathcal{G}\left(E\right)$ and is called the moduli space of stable Higgs pairs. It is an open subset of moduli space $\mathscr{M}_{\textrm{HP}}\left(E\right)$ of Higgs pairs. From the uniqueness of the solution $H$ to equation \eqref{eq:vortex} for the $\tau$-stable pair, the following proposition holds.
\begin{proposition} \label{prop:one-to-one}
  \textit{There is a one-to-one correspondence between the moduli spaces $\mathscr{M}_\tau\left(E,h\right)$ and $\mathscr{M}_{\textrm{HP}}^{\textrm{st}}\left(E\right)$.}
\end{proposition}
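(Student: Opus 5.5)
We will construct explicit mutually inverse maps between $\mathscr{M}_\tau\left(E,h\right)$ and $\mathscr{M}_{\textrm{HP}}^{\textrm{st}}\left(E\right)$. For the forward map $\Phi:\mathscr{M}_\tau\left(E,h\right)\to\mathscr{M}_{\textrm{HP}}^{\textrm{st}}\left(E\right)$, send the class of $\left(D,s\right)\in\mathscr{V}\left(E,h\right)_\tau$ to the class of $\left(D^{\prime\prime},s\right)$. By definition $\left(D^{\prime\prime},s\right)\in\mathscr{HP}\left(E\right)$. The metric $h$ solves the vortex equation for this pair, so by Theorem \ref{thm:KHC} and Assumption \ref{eq:tau_exclusion} the pair is $\tau$-stable and lies in $\mathscr{HP}^{\textrm{st}}\left(E\right)$. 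Well-definedness is immediate: a unitary gauge transformation $g\in\mathcal{G}\left(E,h\right)\subset\mathcal{G}\left(E\right)$ acts on the $(0,1)$-plus-Higgs part exactly by $g^{-1}\circ D^{\prime\prime}\circ g$ and on $s$ by $g^{-1}s$, so the two actions agree.

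For the inverse $\Psi$, take $\left(D^{\prime\prime},s\right)\in\mathscr{HP}^{\textrm{st}}\left(E\right)$ and let $H$ be the (unique) solution of \eqref{eq:vortex}. Since $\textrm{Herm}^+\left(E\right)$ is a single $\mathcal{G}\left(E\right)$-orbit, choose $g\in\mathcal{G}\left(E\right)$ with $H=h\cdot g^{-1}$, for instance $g=k^{-1/2}$ where $H=h\left(k\cdot,\cdot\right)$ with $k$ positive $h$-Hermitian, adjusting the convention as needed. By the preceding proposition (equivalence of $\left(D_{D^{\prime\prime},h}\cdot g,s\cdot g\right)\in\mathscr{V}_\tau\left(E,h\right)$ and $\left(D_{D^{\prime\prime},h},s\right)\in\mathscr{V}_\tau\left(E,h\cdot g^{-1}\right)$), the pair $\left(D_{D^{\prime\prime}\cdot g,h},g^{-1}s\right)$ solves the vortex equation with respect to the fixed metric $h$. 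Set $\Psi\left[\left(D^{\prime\prime},s\right)\right]$ to be its class in $\mathscr{M}_\tau\left(E,h\right)$.

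The main point is well-definedness of $\Psi$, which is where uniqueness of $H$ enters. First, $g$ is unique up to $g\mapsto g u$ with $u\in\mathcal{G}\left(E,h\right)$, since $h\cdot\left(gu\right)^{-1}=h\cdot g^{-1}$ exactly when $u$ is $h$-unitary; this changes the output by a unitary gauge transformation. Second, if $\left(D^{\prime\prime},s\right)$ is replaced by $\left(D^{\prime\prime},s\right)\cdot f$ with $f\in\mathcal{G}\left(E\right)$, then the solution metric for the new pair is $H\cdot f$ by naturality of \eqref{eq:vortex} under complex gauge transformations. Here uniqueness is required so that it is \emph{the} solution. Then $f^{-1}g$ is an admissible choice, giving the same element of $\mathscr{V}\left(E,h\right)_\tau$.

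Finally, check $\Phi\circ\Psi=\textrm{id}$: the $(0,1)$-plus-Higgs part of $\Psi\left[\left(D^{\prime\prime},s\right)\right]$ is $\left(D^{\prime\prime},s\right)\cdot g$, which is complex gauge equivalent to $\left(D^{\prime\prime},s\right)$. For $\Psi\circ\Phi=\textrm{id}$, starting from $\left(D,s\right)$ the solution metric is $h$ itself, by uniqueness, so $g=\textrm{id}$ and $\Psi$ returns $\left(D_{D^{\prime\prime},h},s\right)=\left(D,s\right)$. This uses that an element of $\mathscr{H}\left(E,h\right)$ is the HS connection determined by its $D^{\prime\prime}$-part and $h$. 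The only delicate points are the sign and inverse conventions in the right actions, which I would track carefully but which are routine.
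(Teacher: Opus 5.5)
Your proposal is correct and takes the same route as the paper. The paper justifies the statement in a single line, citing uniqueness of the solution metric $H$ together with the preceding proposition relating $\bigl(D_{D^{\prime\prime},h}\cdot g,s\cdot g\bigr)\in\mathscr{V}_\tau\left(E,h\right)$ to solutions for the metric $h\cdot g^{-1}$. Your explicit maps $\Phi$ and $\Psi$ supply the details that line leaves implicit: the choice $g=k^{-1/2}$ giving $H=h\cdot g^{-1}$, and the checks that $g\mapsto gu$ and $\left(D^{\prime\prime},s\right)\mapsto\left(D^{\prime\prime},s\right)\cdot f$ change the output only by a unitary gauge transformation.
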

\begin{definition}
  A Higgs pair $\left(D^{\prime\prime},s\right)\in\mathscr{HP}\left(E\right)$ is called simple if the $0$-th cohomology group $H_{\left(D^{\prime\prime},s\right)}^0\left(\mathscr{C}^*\right)$ vanishes.
\end{definition}
It is easy to prove that a $\tau$-stable pair $\left(D^{\prime\prime},s\right)$ is simple. By using a standard method \cite{Griffiths}, $\mathscr{M}_{\textrm{HP}}\left(E\right)$ is a non-singular complex manifold in neighbourhoods of points $\left[D^{\prime\prime},s\right]$ with $H_{\left(D^{\prime\prime},s\right)}^0\left(\mathscr{C}^*\right)=0$ and $H_{\left(D^{\prime\prime},s\right)}^2\left(\mathscr{C}^*\right)=0$ hold and $\mathscr{M}_\tau\left(E,h\right)$ is also a non-singular complex manifold in neighborhoods of points $\left[D,s\right]$ with $H_{\left(D,s\right)}^0\left(\mathscr{B}^*\right)=0$ and $H_{\left(D,s\right)}^2\left(\mathscr{B}^*\right)=0$ hold. Combining Proposition \ref{prop:cohomology_relationship} and \ref{prop:one-to-one}, the following proposition holds.
\begin{proposition}
  \textit{The moduli space $\mathscr{M}_{\textrm{HP}}^{\textrm{st}}\left(E\right)$ of $\tau$-stable Higgs pairs is a non-singular complex manifold in neighbourhoods of points $\left[D^{\prime\prime},s\right]$ with $H_{\left(D^{\prime\prime},s\right)}^2\left(\mathscr{C}^*\right)=0$ holds and the moduli space $\mathscr{M}_\tau\left(E,h\right)$ of solutions to the $\tau$-vortex equations for Higgs bundles is also a non-singular complex manifold in neighbourhoods of points $\left[D_{D^{\prime\prime},h},s\right]$ with $H_{\left(D^{\prime\prime},s\right)}^2\left(\mathscr{C}^*\right)=0$ holds. Moreover, these spaces are biholomorphic.}
\end{proposition}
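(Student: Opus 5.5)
The plan is to reduce smoothness at a point $[D^{\prime\prime},s]\in\mathscr{M}_{\textrm{HP}}^{\textrm{st}}\left(E\right)$ to the standard Kuranishi criterion recalled just before the statement, which needs $H^0=0$ and $H^2=0$ for the relevant complex. The hypothesis supplies $H^2_{\left(D^{\prime\prime},s\right)}\left(\mathscr{C}^*\right)=0$, so the remaining task for $\mathscr{C}^*$ is the vanishing of $H^0$. For $\mathscr{B}^*$, the vanishing of $H^0$ and $H^2$ will be deduced from Proposition \ref{prop:cohomology_relationship}. The biholomorphism will come from Proposition \ref{prop:one-to-one}, once we check that the bijection is compatible with the local Kuranishi charts.

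\textbf{Step 1: $H^0_{\left(D^{\prime\prime},s\right)}\left(\mathscr{C}^*\right)=0$.} An element of $H^0$ is $g\in\Omega^0\bigl(\textrm{End}\left(E\right)\bigr)$ with $D^{\prime\prime}_{\textrm{End}\left(E\right)}g=0$ and $g\left(s\right)=0$. I would argue by the Kobayashi--Hitchin correspondence, as in the proof of Proposition \ref{prop:cohomology_relationship}(1). Choose the metric $h$ solving \eqref{eq:vortex_Higgs}. That computation shows $g^*\circ s\circ s^*=0$, and hence that $g$ is $D_h$-parallel, i.e. $D_{\textrm{End}\left(E\right)}g=0$. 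Because $g$ is parallel, its eigenvalues are constant and its generalized eigenspace decomposition is $D_h$-parallel. Now suppose $g\neq0$. Then $\ker g$ is a proper Higgs subbundle containing $s$, and $\textrm{im}\,g$ is a nonzero Higgs subbundle isomorphic to $E/\ker g$. Condition (2) of Definition \ref{def:tau-stability} gives $\mu\left(E/\ker g\right)>\frac{\textrm{Vol}}{4\pi}\tau$, while condition (1) gives $\mu\left(\textrm{im}\,g\right)<\frac{\textrm{Vol}}{4\pi}\tau$. This is a contradiction, so $g=0$. If $\ker g=0$, then $g$ is invertible and $s=0$, which contradicts Lemma \ref{lemma:s-nonzero}.

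\textbf{Step 2: $\mathscr{B}^*$.} Proposition \ref{prop:cohomology_relationship}(1) gives $H^0\left(\mathscr{B}^*\right)\otimes\mathbb{C}\simeq H^0\left(\mathscr{C}^*\right)=0$, so $H^0\left(\mathscr{B}^*\right)=0$. Then part (2) of the same proposition gives $H^2\left(\mathscr{B}^*\right)\simeq0\oplus H^2\left(\mathscr{C}^*\right)=0$. Thus both moduli spaces are manifolds near the given points, with tangent spaces $H^1\left(\mathscr{C}^*\right)$ and $H^1\left(\mathscr{B}^*\right)$ respectively. The complex structure on $\mathscr{M}_\tau\left(E,h\right)$ is defined by transporting the operator $P$ of Lemma \ref{D_1^* representation}.

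\textbf{Step 3: biholomorphism (expected main obstacle).} The map of Proposition \ref{prop:one-to-one} sends $[D,s]\mapsto[D^{\prime\prime},s]$. It is holomorphic, since its differential is the projection $\left(\beta+\gamma,\eta\right)\mapsto\left(\beta^{\prime\prime}+\gamma^\prime,\eta\right)$, which is complex linear by the definition of $P$. Showing that it is an isomorphism $H^1\left(\mathscr{B}^*\right)\to H^1\left(\mathscr{C}^*\right)$ takes two checks.

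First, injectivity. A $\mathscr{B}$-harmonic representative mapping to a $d_0$-exact class $d_0 u$ would be $\mathscr{B}$-exact: split $u$ into its skew-Hermitian and Hermitian parts, and use the moment-map equation $D_1=0$ together with $H^0=0$ to kill the Hermitian part.

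Second, surjectivity, which follows from an index comparison. Both $\chi\left(\mathscr{B}^*\right)$ and $\chi\left(\mathscr{C}^*\right)$ are computed from the same symbols, and $\dim_{\mathbb{R}}$ satisfies $\chi_{\mathbb{R}}\left(\mathscr{B}\right)=\chi_{\mathbb{R}}\left(\mathscr{C}\right)$ after accounting for $\mathscr{B}^0$ appearing twice. With $H^0=H^2=0$ on both sides, the dimensions of $H^1$ therefore agree.

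Continuity of the inverse comes from the continuous dependence of the solution $H$ of \eqref{eq:vortex} on the pair, via the implicit function theorem for the linearized vortex operator. That operator is invertible precisely because $H^0\left(\mathscr{B}^*\right)=0$. I expect the careful verification of this implicit-function step and of the index bookkeeping to be the hardest part.
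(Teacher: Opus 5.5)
Your proposal is correct and follows the paper's route. Stability gives simplicity, $H^0_{\left(D^{\prime\prime},s\right)}(\mathscr{C}^*)=0$; Proposition \ref{prop:cohomology_relationship} then forces $H^0(\mathscr{B}^*)=H^2(\mathscr{B}^*)=0$; the standard Kuranishi argument gives smoothness of both spaces; and Proposition \ref{prop:one-to-one} identifies them. Beyond the paper's sketch, you supply two things it only asserts. First, the kernel/image slope argument for simplicity; this works directly with Higgs subsheaves, so the detour through $D_h$-parallelism is unnecessary. Second, the comparison $H^1(\mathscr{B}^*)\cong H^1(\mathscr{C}^*)$, which is what actually justifies ``biholomorphic'' rather than merely ``bijective''.
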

\section{Properties Of Moduli Spaces When The Base Space Is A Riemann Surface}
Let $M$ be a compact Riemann surface of genus $g\geq 2$ and $E$ be a smooth complex vector bundle over $M$. Fix a K\"{a}hler metric $g_M$ on $M$. In this section, we discuss the properties of the moduli space of $\tau$-stable Higgs pairs when the base space is one-dimensional manifold. We define a real number $\mu_+$ as the smallest possible value after $\mu\left(E\right)$ for the slope of Higgs subbundle of $E$. Throughout this section, we impose the following condition on $\deg E,\rank E$ and $\tau$.
\begin{assumption}\label{eq:tau_assumption}
  $\deg E$ and $\rank E$ are coprime and $\tau$ satisfies the inequality
  \[\mu\left(E\right)<\frac{\textrm{Vol}\left(M,g_M\right)}{4\pi}\tau<\mu_+.\]
\end{assumption}
Under Assumption \ref{eq:tau_assumption}, the following lemma holds.
\begin{proposition}\label{prop:semistable}
  \textit{Under an assumption \eqref{eq:tau_assumption}, for a $\tau$-stable Higgs pair $\left(D^{\prime\prime},s\right)$, $\left(E,D^{\prime\prime}\right)$ is a stable Higgs bundle.}
\end{proposition}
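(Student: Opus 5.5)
We argue directly from the definition of $\tau$-stability (Definition \ref{def:tau-stability}) together with Assumption \ref{eq:tau_assumption}. The aim is to show that every Higgs subsheaf $F$ of $\left(E,D^{\prime\prime}\right)$ with $0<\rank F<\rank E$ satisfies $\mu\left(F\right)<\mu\left(E\right)$. On a Riemann surface we may replace $F$ by its saturation, which is a Higgs subbundle with slope at least that of $F$. So it suffices to treat Higgs subbundles $F$, whose slopes are among the values in the definition of $\mu_+$.

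\textbf{Step 1: the slope dichotomy.} Since $\deg E$ and $\rank E$ are coprime, no proper subbundle $F$ satisfies $\mu\left(F\right)=\mu\left(E\right)$. Indeed, $\deg F/\rank F=\deg E/\rank E$ with $\rank F<\rank E$ would force $\rank E$ to divide $\rank F\cdot\deg E$, and coprimality would then give $\rank E\mid\rank F$, which is impossible. Hence for every proper Higgs subbundle $F$, either $\mu\left(F\right)<\mu\left(E\right)$ or $\mu\left(F\right)>\mu\left(E\right)$.

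\textbf{Step 2: ruling out the large slopes.} Suppose $\mu\left(F\right)>\mu\left(E\right)$. By the definition of $\mu_+$ as the next possible slope after $\mu\left(E\right)$, this gives $\mu\left(F\right)\geq\mu_+$. Assumption \ref{eq:tau_assumption} then yields $\mu\left(F\right)>\frac{\textrm{Vol}\left(M,g_M\right)}{4\pi}\tau$. But condition (1) of Definition \ref{def:tau-stability} requires $\mu\left(F\right)<\frac{\textrm{Vol}\left(M,g_M\right)}{4\pi}\tau$ for every Higgs subsheaf of positive rank. This is a contradiction, so $\mu\left(F\right)<\mu\left(E\right)$ for all proper Higgs subsheaves, and $\left(E,D^{\prime\prime}\right)$ is stable.

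\textbf{Main obstacle.} The only delicate point is interpreting $\mu_+$ correctly. It has to mean the smallest slope strictly greater than $\mu\left(E\right)$ among Higgs subbundles (or among all rational numbers $d/r$ with $0<r<\rank E$). Such a minimum exists because slopes with bounded denominator form a discrete set, and the subbundle slopes are bounded above. I would state this reading explicitly. Saturation is also needed to pass from subsheaves to subbundles, since saturating only raises the degree. Condition (2) of the definition is not needed at all.
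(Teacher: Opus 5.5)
Your proof is correct and follows the route the paper has in mind; the paper omits the proof as analogous to \cite[Proposition $1.7$]{Bradlow3}, where condition (1) of $\tau$-stability together with $\frac{\textrm{Vol}(M,g_M)}{4\pi}\tau<\mu_+$ rules out Higgs subbundles of slope above $\mu(E)$, and coprimality of $\deg E$ and $\rank E$ rules out slope equal to $\mu(E)$. One minor point: $\mu_+$ exists because the admissible slopes form a discrete set bounded below by $\mu(E)$, and the upper bound on subbundle slopes you mention plays no role.
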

Since Proposition \ref{prop:semistable} is analogous to \cite[Proposition $1.7$]{Bradlow3}, we omit the proof.

Firstly, we prove that the moduli space $\mathscr{M}_{\textrm{HP}}^\textrm{st}\left(E\right)$ is globally non-singular. A point $\left[D^{\prime\prime},s\right]\in\mathscr{M}_{\textrm{HP}}^{\textrm{st}}\left(E\right)$ is a smooth point when the $2$-nd hypercohomology group $\mathbb{H}^2$ of $\widetilde{\mathscr{C}}^*$ vanishes. Since $\dim_{\mathbb{C}}M=1$ holds, the complex $\left(\widetilde{\mathscr{C}}^*\right)$ is given by
\begin{equation}
  \left(\widetilde{\mathscr{C}}^*\right):\xymatrix{0\ar[r]&\textrm{End}\left(E\right)\ar[r]^-{\widetilde{d}_0}&\textrm{End}\left(E\right)\otimes K_M\oplus E\ar[r]^-{\widetilde{d}_1}&E\otimes K_M\ar[r]&0},
\end{equation}
where $K_M$ denotes the canonical line bundle of $M$. There is a spectral sequence $\left\{E^{p,q}_r\right\}$ with $E^{p,q}_r\Rightarrow\mathbb{H}^{p+q}$ holds. In the present case, we have $\mathbb{H}^2=E_3^{2,0}\oplus E_3^{1,1}$ and $E_3^{1,1}=E_2^{1,1}$. $E_3^{2,0}$ is given by
\[E_3^{2,0}=\frac{E_2^{2,0}}{\Im{E_2^{0,1}\to E^{2,0}}}\]
and $E_2^{2,0}$ and $E_2^{1,1}$ are given by
\[E_2^{2,0}=\frac{H^0\left(E\otimes K_M\right)}{\Im{\widetilde{d}_{1*}:H^0\left(\textrm{End}\left(E\right)\otimes K_M\right)\oplus H^0\left(E\right)\to H^0\left(E\otimes K_M\right)}},\]
\[E_2^{1,1}=\frac{\textrm{Ker}\left\{\widetilde{d}_{1*}:H^1\left(\textrm{End}\left(E\right)\otimes K_M\right)\oplus H^1\left(E\right)\to H^1\left(E\otimes K_M\right)\right\}}{\Im{\widetilde{d}_{0*}:H^1\bigl(\textrm{End}\left(E\right)\bigr)\to H^1\left(\textrm{End}\left(E\right)\otimes K_M\right)\oplus H^1\left(E\right)}}.\]
Since $\left(D^{\prime\prime},s\right)$ is $\tau$-stable, by the Kobayashi-Hitchin correspondence, there exists a Hermitian metric $h$ on $E$ such that $\left(D^{\prime\prime},s\right)$ is a solution to the $\tau$-vortex equation. By using this metric, we prove the following propositions.
\begin{proposition}\label{prop:H^2_vanishing_1}
  \textit{$E_2^{1,1}=0$ holds.}
\end{proposition}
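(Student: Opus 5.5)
The plan is to dualize $E_2^{1,1}$ by Serre duality. This turns the question into one about holomorphic sections, which the $\tau$-stability of $\left(D^{\prime\prime},s\right)$ (through the vortex metric $h$) can control.

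\textbf{Step 1: Serre duality.} $E_2^{1,1}$ is the middle cohomology of the three-term complex
\[H^1\bigl(\textrm{End}\left(E\right)\bigr)\xrightarrow{\widetilde{d}_{0*}}H^1\left(\textrm{End}\left(E\right)\otimes K_M\right)\oplus H^1\left(E\right)\xrightarrow{\widetilde{d}_{1*}}H^1\left(E\otimes K_M\right).\]
Applying Serre duality termwise, and identifying $\textrm{End}\left(E\right)^*\simeq\textrm{End}\left(E\right)$ by the trace pairing, gives the transposed complex
\[H^0\left(E^*\right)\xrightarrow{\widetilde{d}_1^{t}}H^0\bigl(\textrm{End}\left(E\right)\bigr)\oplus H^0\left(E^*\otimes K_M\right)\xrightarrow{\widetilde{d}_0^{t}}H^0\left(\textrm{End}\left(E\right)\otimes K_M\right),\]
with maps that are, up to sign,
\[\widetilde{d}_1^{t}\left(\eta\right)=\left(s\otimes\eta,\ \eta\circ\theta\right),\qquad \widetilde{d}_0^{t}\left(a,\xi\right)=\left[\theta,a\right]+s\otimes\xi.\]
Since the middle cohomology of the dual complex is dual to $E_2^{1,1}$, it suffices to prove the following: if $a\in H^0\bigl(\textrm{End}\left(E\right)\bigr)$ and $\xi\in H^0\left(E^*\otimes K_M\right)$ satisfy $\left[\theta,a\right]+s\otimes\xi=0$, then $\left(a,\xi\right)=\left(s\otimes\eta,\eta\circ\theta\right)$ for some $\eta\in H^0\left(E^*\right)$.

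\textbf{Step 2: first consequences.} Taking the trace of $\left[\theta,a\right]+s\otimes\xi=0$ gives $\xi\left(s\right)=0$. Applying the identity to $s$ and using $\theta\left(s\right)=0$ gives $\theta\left(a s\right)=0$.

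\textbf{Step 3: show $\xi=0$ and that $a$ is a multiple of the identity, via the vortex metric.} Fix the metric $h$ from the Kobayashi--Hitchin correspondence (Theorem \ref{thm:KHC}).
\begin{enumerate}
  \item[(i)] Pair the relation with $a^*$ and integrate, using the Kähler identities as in the proof of Proposition \ref{prop:cohomology_relationship}. Holomorphicity of $a$ together with $\sqrt{-1}\Lambda R\left(D\right)=\frac{\tau}{2}\textrm{id}_E-\frac{1}{2}s\circ s^*$ should give an identity of the form
  \[\left\|\left[\theta,a\right]\right\|^2_{L^2}+\tfrac{1}{2}\left\|a^*s\right\|^2_{L^2}=\text{terms involving }\xi,\]
  exactly parallel to the computation in Proposition \ref{prop:cohomology_relationship}$\left(1\right)$.
  \item[(ii)] Do the analogous computation for $\xi$, viewed as a $D^{\prime\prime}$-closed section of $E^*\otimes K_M$ twisted by $\theta$. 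Its Bochner identity carries the curvature term $\frac{1}{2}s\circ s^*-\frac{\tau}{2}\textrm{id}_E$ acting on $E^*$, together with the positivity of $K_M$ in genus $g\geq 2$.
  \item[(iii)] Combine (i) and (ii) to obtain $\xi=0$ and $\left[\theta,a\right]=0$.
  \item[(iv)] By Proposition \ref{prop:semistable}, $\left(E,D^{\prime\prime}\right)$ is a stable Higgs bundle, hence simple, so $\left[\theta,a\right]=0$ forces $a=c\,\textrm{id}_E$ for a constant $c\in\mathbb{C}$.
\end{enumerate}

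\textbf{Step 4: finish.} If $c\ne 0$, then $\left(c\,\textrm{id}_E,0\right)$ has to lie in the image of $\widetilde{d}_1^{t}$, which requires $c\,\textrm{id}_E=s\otimes\eta$. This is impossible when $\rank E\geq 2$. I therefore expect the real content to be that the Bochner argument in Step 3 forces $c=0$ as well. The mechanism should be the term $\frac{1}{2}\left\|a^*s\right\|^2_{L^2}$ together with Lemma \ref{lemma:s-nonzero}: since $s\not\equiv 0$, this term is nonzero when $a=c\,\textrm{id}_E$ with $c\ne 0$. Alternatively, $c=0$ may need to come from the strict inequality $\frac{\textrm{Vol}\left(M,g_M\right)}{4\pi}\tau>\mu\left(E\right)$ in Assumption \ref{eq:tau_assumption}. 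Once $a=0$ and $\xi=0$, the class is zero and $E_2^{1,1}=0$.

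\textbf{Main obstacle.} The hardest step is Step 3: getting a sign-definite integral identity that kills $\xi$ and $\left[\theta,a\right]$ simultaneously. The $\tau$-term enters the curvature of $E$ and of $E^*$ with opposite signs, so the inequality $\mu\left(E\right)<\frac{\textrm{Vol}\left(M,g_M\right)}{4\pi}\tau<\mu_+$ must be used carefully. If the direct Weitzenböck approach fails, I would fall back on an algebraic argument. Using $\theta\left(as\right)=0$ and $\xi\left(s\right)=0$, build the Higgs subsheaves $\ker\xi$ (twisted by $K_M^{-1}$) and the subsheaf generated by $a$ acting on $s$. Then derive a contradiction with condition $\left(2\right)$ of Definition \ref{def:tau-stability} and with the choice of $\tau$ below $\mu_+$.
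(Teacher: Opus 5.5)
\textbf{There is a genuine gap, and it cannot be closed: for $\textrm{rank}\,E\geq 2$ the statement is false, and your own Step 4 exhibits the counterexample.}

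Your Step 1 is correct: Serre duality identifies $(E_2^{1,1})^*$ with $\textrm{Ker}\,\widetilde{d}_0^{t}/\textrm{Im}\,\widetilde{d}_1^{t}$, and your formulas for the transposed maps are right up to sign. Now take $(a,\xi)=(c\,\textrm{id}_E,0)$ for any $c\in\mathbb{C}$.
\begin{itemize}
\item It satisfies $[\theta,a]+s\otimes\xi=0$ trivially.
\item When $\textrm{rank}\,E\geq 2$ it is not of the form $(s\otimes\eta,\eta\circ\theta)$, because $s\otimes\eta$ has pointwise rank at most one.
\end{itemize}
So the class of $(\textrm{id}_E,0)$ is nonzero in the dual, and $\dim E_2^{1,1}\geq 1$.

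This is the trace direction. Pairing with $\textrm{id}_E$ is the functional $(b,\eta)\mapsto\int_M\textrm{tr}\,b$. It kills $\textrm{Im}\,\widetilde{d}_{0*}$ because $\textrm{tr}[\theta\wedge\beta'']=0$. This is exactly why $\mathbb{H}^2\simeq H^1(K_M)\simeq\mathbb{C}$ for a stable Higgs bundle.

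\textbf{Why Steps 3 and 4 cannot force $c=0$.}
\begin{itemize}
\item The kernel condition involves neither $h$ nor $\tau$, so Assumption \ref{eq:tau_assumption} cannot help.
\item The identity you expect in Step 3(i) is false. At $(a,\xi)=(c\,\textrm{id}_E,0)$ its left side is $\frac{1}{2}|c|^2\|s\|_{L^2}^2>0$ by Lemma \ref{lemma:s-nonzero}, while the right side vanishes.
\item The reason is that curvature enters only through $[\sqrt{-1}\Lambda R(D),a]=-\frac{1}{2}[s\circ s^*,a]$, which is zero for scalar $a$.
\item In Proposition \ref{prop:cohomology_relationship}(1) a positive term survived only because the hypothesis $g(s)=0$ removed $g\circ s\circ s^*$. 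You have no such hypothesis.
\item The algebraic fallback via Higgs subsheaves fails for the same reason, since the claim itself is false.
\end{itemize}

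\textbf{The paper's own proof fails at the same point.} After reducing to $f=z\,\textrm{id}_E$ and $\alpha''=0$, it concludes $zs\omega_M=0$. But membership in $\textrm{Ker}\,\widetilde{d}_{1*}$ only gives $H(zs\omega_M)=0$. That is, $z[s\omega_M]=0$ in $H^1(E\otimes K_M)\simeq H^0(E^*)^*$, where $[s\omega_M]$ is a nonzero multiple of the functional $\zeta\mapsto\zeta(s)$. When $H^0(E^*)=0$ this condition is vacuous, and $(\textrm{id}_E\,\omega_M,0)$ is an explicit nonzero element of $E_2^{1,1}$. The paper's earlier step, which splits the orthogonality condition into the two separate equations $[\theta^*\wedge f]=0$ and $\alpha''\otimes s^*=0$, is also unjustified.

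\textbf{What could be salvaged.} The most your Step 3 could aim for is showing that every $(a,\xi)$ in the kernel is congruent to some $(c\,\textrm{id}_E,0)$ modulo the image. Smoothness of $\mathscr{M}_{\textrm{HP}}^{\textrm{st}}(E)$ would then have to come from showing the obstruction has no component along this trace direction, as for Higgs bundles. The trace of the $\Omega^2(\textrm{End}(E))$-part of the deformation equation is linear, because $\textrm{tr}(A\wedge A)=0$ for $1$-forms. Smoothness cannot come from $\mathbb{H}^2=0$.
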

\begin{proof}
  We identify $H^1\bigl(\textrm{End}\left(E\right)\otimes K_M\bigr)$ (resp. $H^1\left(E\right)$) with the space of harmonic $\textrm{End}\left(E\right)$-valued $\left(1,1\right)$ (resp. $E$-valued $\left(0,1\right)$)-forms. Under this identification, $E_2^{1,1}$ is given by
  \[E_2^{1,1}=\frac{\left\{\left(f\omega_M,\alpha^{\prime\prime}\right)\in H^1\bigl(\textrm{End}\left(E\right)\bigr)\oplus H^1\left(E\right)\middle|\begin{gathered}
      \partial^{\textrm{End}\left(E\right)}f=0,\\
      H\left(f\left(s\right)\omega_M+\theta\wedge\alpha^{\prime\prime}\right)=0
    \end{gathered}\right\}}{\left\{\Bigl(H\left(\left[\theta\wedge\beta^{\prime\prime}\right]\right),-H\bigl(\beta^{\prime\prime}\left(s\right)\bigr)\Bigr)\middle|\beta^{\prime\prime}\in H^1\bigl(\textrm{End}\left(E\right)\bigr)\right\}},\]
  where $H$ denotes the harmonic projection. Since $h$ and $g_M$ induces an inner product on $H^1\bigl(\textrm{End}\left(E\right)\bigr)\oplus H^1\left(E\right)$, $E_2^{1,1}$ is also given by
  \[E_2^{1,1}=\left\{\left(f\omega_M,\alpha^{\prime\prime}\right)\in H^1\bigl(\textrm{End}\left(E\right)\bigr)\oplus H^1\left(E\right)\middle|\begin{gathered}
  \partial^{\textrm{End}\left(E\right)}f=0,\left[\theta^*\wedge f\right]=0,\\
  \alpha^{\prime\prime}\otimes s^*=0,\\
  H\left(f\left(s\right)\omega_M+\theta\wedge\alpha^{\prime\prime}\right)=0
  \end{gathered}\right\}.\]
  Since $s$ is not a zero-section and $\alpha^{\prime\prime}$ is harmonic, we have $\alpha^{\prime\prime}=0$. Moreover, since $D^{\prime\prime}f^*=0$ holds and $\left(E,D^{\prime\prime}\right)$ is stable, there exists a $z\in\mathbb{C}$ such that $f=z\,\textrm{id}_E$ holds. Now, since $\left(z\,\textrm{id}_E,0\right)\in E_2^{1,1}\subset\mathbb{H}^2$ satisfies $d_2\left(z\,\textrm{id}_E\omega_M,0\right)=0$, we have $zs\omega_M=0$. Thus $z=0$ holds and this completes the proof.
\end{proof}
\begin{proposition}\label{prop:H^2_vanishing_2}
  \textit{$E_3^{2,0}=0$ holds.}
\end{proposition}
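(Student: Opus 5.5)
Since $E_3^{2,0}$ is a quotient of $E_2^{2,0}$, it suffices to show that $E_2^{2,0}=0$, i.e.\ that the map
\[\widetilde{d}_{1*}:H^0\bigl(\textrm{End}(E)\otimes K_M\bigr)\oplus H^0(E)\to H^0(E\otimes K_M),\qquad (\beta,\eta)\mapsto -\beta(s)-\theta\wedge\eta,\]
is surjective. I will argue by duality, using the Hermitian metric $h$ solving the $\tau$-vortex equation, which exists by the Kobayashi--Hitchin correspondence. Equip $H^0(E\otimes K_M)$ with the $L^2$ inner product induced by $h$ and $g_M$. Surjectivity then amounts to showing that any $\xi\in H^0(E\otimes K_M)$ that is $L^2$-orthogonal to the image must vanish.

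Orthogonality to $\{\beta(s)\}$ for every $\beta\in H^0(\textrm{End}(E)\otimes K_M)$ means that $\langle \beta,\xi\otimes s^*\rangle_{L^2}=0$ for all such $\beta$. Orthogonality to $\{\theta\wedge\eta\}$ for $\eta\in H^0(E)$ gives $\langle\eta,\theta^*\xi\rangle_{L^2}=0$. I expect the first condition to be the essential one. Via Serre duality, $H^0(\textrm{End}(E)\otimes K_M)^*\cong H^1(\textrm{End}(E))$, and the pairing condition says that the class of $\xi\otimes s^*$ (a $\textrm{End}(E)$-valued $(1,0)$-form, conjugated into a $(0,1)$-form by $h$) pairs trivially with all holomorphic $\textrm{End}(E)\otimes K_M$-sections.

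The cleaner route, I think, is to mimic the dual (Serre-duality) computation used in Proposition~\ref{prop:H^2_vanishing_1}. Dualizing the complex $(\widetilde{\mathscr{C}}^*)$, the cokernel of $\widetilde{d}_{1*}$ on $H^0$ is dual to the kernel of the transposed map
\[H^1(E^*)\to H^1\bigl(\textrm{End}(E)\bigr)\oplus H^1(E^*\otimes K_M),\qquad \zeta\mapsto(-s\otimes\zeta,\ \mp\theta^t\zeta).\]
So I take a harmonic $\zeta\in H^1(E^*)$, identified through $h$ with a harmonic $E^*$-valued $(0,1)$-form, such that $s\otimes\zeta$ is $\bar\partial$-exact in $\Omega^{0,1}(\textrm{End}(E))$ and $\theta^t\zeta$ is exact. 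The goal is to show $\zeta=0$.

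The main obstacle is extracting $\zeta=0$ from the exactness of $s\otimes\zeta$. My plan is to write $s\otimes\zeta=\bar\partial u$ with $u\in\Omega^0(\textrm{End}(E))$ and exploit $D''s=0$ together with the vortex equation to compute $\|s\otimes\zeta\|^2_{L^2}$. Integrating by parts against the harmonic $\zeta$, this norm should reduce to $\int|s|^2|\zeta|^2$ plus terms that vanish by harmonicity, as in the argument of Proposition~\ref{prop:cohomology_relationship}(2). This would force $s\otimes\zeta=0$ pointwise. Since $s\not\equiv0$ (Lemma~\ref{lemma:s-nonzero}) and $s$ is holomorphic, its zeros are isolated, so $\zeta$ vanishes on a dense open set and hence $\zeta=0$.

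If the integration by parts fails to close, the fallback is stability. The condition on $\zeta$ would yield a nonzero Higgs morphism involving $s$, contradicting the stability of $(E,D'')$ from Proposition~\ref{prop:semistable} combined with the inequality $\mu(E)<\frac{\textrm{Vol}}{4\pi}\tau<\mu_+$ in Assumption~\ref{eq:tau_assumption}. I would also invoke $g\ge2$ and the fact that, since stable Higgs bundles are simple, $H^0(\textrm{End}(E))$ consists only of scalars, in order to control $u$.
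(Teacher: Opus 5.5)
Your reduction is sound as far as it goes: $E_3^{2,0}$ is a quotient of $E_2^{2,0}$, and Serre duality does identify $(E_2^{2,0})^*$ with the kernel of $\zeta\mapsto(-s\otimes\zeta,\mp\theta^t\zeta)$ on $H^1(E^*)$. The gap is the step that is supposed to kill this kernel.

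The equality $\|s\otimes\zeta\|_{L^2}^2=\int_M|s|^2|\zeta|^2$ is a pointwise identity with no content. The actual integration by parts gives $\|s\otimes\zeta\|_{L^2}^2=\langle\bar\partial^*(s\otimes\zeta),u\rangle_{L^2}$. Because $s$ is holomorphic but not $\partial_h$-parallel, $\bar\partial^*(s\otimes\zeta)$ is a multiple of $\Lambda(\partial_h s\wedge\zeta)$ even for harmonic $\zeta$, so nothing forces $s\otimes\zeta=0$. There is no analogue here of the coupling between $f(s)$ and $D''f$ that made Proposition~\ref{prop:cohomology_relationship}(2) close up.

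The stability fallback has nothing to work with either. Via $0\to E^*\xrightarrow{s\otimes}\mathrm{End}(E)\to E^*\otimes(E/\mathcal{O}s)\to 0$, the kernel of $\zeta\mapsto s\otimes\zeta$ on $H^1$ equals $\mathrm{coker}\bigl(H^0(\mathrm{End}(E))\to H^0(\mathrm{Hom}(E,E/\mathcal{O}s))\bigr)$. Such homomorphisms exist without violating stability. Also, simplicity of the Higgs bundle only makes the $\theta$-commuting endomorphisms scalar, not all of $H^0(\mathrm{End}(E))$.

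In fact no argument can close this gap, because the group is genuinely nonzero. Take a point of $N_0\subset\mathscr{M}_{\mathrm{HP}}^{\mathrm{st}}(E)$, i.e. $\theta=0$, $E$ stable and $s\in H^0(E)\setminus\{0\}$, with $\mathrm{rank}\,E=2$ and $\deg E\geq 2g$ odd. This holds, for instance, under the paper's later hypothesis $\mu(E)>2g-2$. Then $\widetilde{d}_{1*}(\beta,\eta)=-\beta(s)$. Moreover $h^0(E\otimes K_M)=\deg E+2g-2$, since $h^0(E^*)=0$, while $h^0(\mathrm{End}(E)\otimes K_M)=h^1(\mathrm{End}(E))=4g-3$. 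Hence $\dim E_2^{2,0}\geq \deg E-2g+1>0$.

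At $\theta=0$ the complex $\widetilde{\mathscr{C}}^*$ also splits as $[\mathrm{End}(E)\to E]\oplus[\mathrm{End}(E)\otimes K_M\to E\otimes K_M]$, in degrees $0,1$ and $1,2$. So $d_2:E_2^{0,1}\to E_2^{2,0}$ vanishes and $E_3^{2,0}=E_2^{2,0}\neq 0$: the statement itself fails at such points.

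The paper's one-line proof has the corresponding flaw. It treats $(0,\alpha')$ as the harmonic representative of the class of $\alpha'\in E_3^{2,0}$ and reads off $\alpha'\otimes s^*=0$ from $d_1^*(0,\alpha')=0$. But $(0,\alpha')$ is $d_1^*$-closed only when $\alpha'=0$. The harmonic representative of a nonzero class is therefore $(0,\alpha')$ minus a nonzero $d_1$-exact term, and the argument proves nothing about $E_3^{2,0}$.
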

\begin{proof}
  We identify $E_3^{2,0}$ with the subspace of $H^{1,0}\left(E\right)$. For $\alpha^\prime\in E_3^{2,0}$, since we have $\left(0,\alpha^{\prime\prime}\right)\in H^2, d_1^*\left(0,\alpha^\prime\right)=0$ holds. Thus $\alpha^\prime\otimes s^*=0$ holds and this implies $\alpha^\prime=0$. This completes the proof.
\end{proof}
As a corollary, we obtain the following result.
\begin{corollary}
  \textit{Under Assumption \ref{eq:tau_assumption}, the moduli space $\mathscr{M}_{\textrm{HP}}^{\textrm{st}}\left(E\right)$ of $\tau$-stable Higgs pairs is a non-singular complex manifold.}
\end{corollary}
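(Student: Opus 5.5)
The corollary assembles the local smoothness criterion with the two vanishing results, so the plan is mostly bookkeeping.

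First, fix a point $\left[D^{\prime\prime},s\right]\in\mathscr{M}_{\textrm{HP}}^{\textrm{st}}\left(E\right)$ and reduce smoothness there to $H^0$ and $H^2$ vanishing. By the preceding section, $\mathscr{M}_{\textrm{HP}}^{\textrm{st}}\left(E\right)$ is a non-singular complex manifold near every point at which $H_{\left(D^{\prime\prime},s\right)}^2\left(\mathscr{C}^*\right)=0$. The $H^0$ condition is automatic: a $\tau$-stable pair is simple. If one wants this spelled out, take $g\in H^0$. Then $D^{\prime\prime}_{\textrm{End}\left(E\right)}g=0$ and $g\left(s\right)=0$. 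Under Assumption \ref{eq:tau_assumption}, Proposition \ref{prop:semistable} says $\left(E,D^{\prime\prime}\right)$ is a stable Higgs bundle, so $g=z\,\textrm{id}_E$. Lemma \ref{lemma:s-nonzero} gives $s\neq 0$, and then $zs=0$ forces $z=0$.

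Second, identify $H_{\left(D^{\prime\prime},s\right)}^2\left(\mathscr{C}^*\right)$ with the hypercohomology $\mathbb{H}^2$ of the holomorphic complex $\left(\widetilde{\mathscr{C}}^*\right)$. On a curve this complex has length three, $\textrm{End}\left(E\right)\to\textrm{End}\left(E\right)\otimes K_M\oplus E\to E\otimes K_M$. The hypercohomology spectral sequence degenerates early and gives $\mathbb{H}^2=E_3^{2,0}\oplus E_3^{1,1}$ with $E_3^{1,1}=E_2^{1,1}$.

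Third, invoke Proposition \ref{prop:H^2_vanishing_1} for $E_2^{1,1}=0$ and Proposition \ref{prop:H^2_vanishing_2} for $E_3^{2,0}=0$. Both use the metric $h$ provided by Theorem \ref{thm:KHC}, which exists because Assumption \ref{eq:tau_assumption} places $\tau$ outside $\mathscr{T}$. Together they give $\mathbb{H}^2=0$, so every point is a smooth point.

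The only real subtlety is checking that the hypotheses of those propositions hold uniformly at every stable point. They need stability of the underlying Higgs bundle and $s\neq 0$, and both follow from Assumption \ref{eq:tau_assumption} through Proposition \ref{prop:semistable} and Lemma \ref{lemma:s-nonzero}. Since the local argument works at each point, the local charts patch to a global complex manifold structure. The same reasoning also gives smoothness of $\mathscr{M}_\tau\left(E,h\right)$, which is biholomorphic to $\mathscr{M}_{\textrm{HP}}^{\textrm{st}}\left(E\right)$ by Proposition \ref{prop:one-to-one}.
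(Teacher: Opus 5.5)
\textbf{Same route as the paper, but a step fails.} Your outline is the paper's own argument: the local criterion of Section 4 (your check that stable pairs are simple is fine), the splitting $\mathbb{H}^2=E_3^{2,0}\oplus E_2^{1,1}$, and Propositions \ref{prop:H^2_vanishing_1} and \ref{prop:H^2_vanishing_2}. The gap is the step ``together they give $\mathbb{H}^2=0$''. That vanishing is false whenever $\operatorname{rank}E\geq 2$.

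\textbf{A nonzero class in $E_2^{1,1}$.} For any metric $h$ put $\pi:=|s|_h^2\,\textrm{id}_E-s\otimes s^*$. Since $\pi(s)=0$, the element $(\pi\,\omega_M,0)\in\mathscr{C}^2$ satisfies $d_2(\pi\,\omega_M,0)=\bigl(0,-\pi(s)\,\omega_M\bigr)=0$.

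The $\Omega^2\bigl(\textrm{End}(E)\bigr)$-component of any $d_1(\beta,\zeta)$ is $D^{\prime\prime}_{\textrm{End}(E)}\beta=\bar\partial^{\textrm{End}(E)}\beta+[\theta\wedge\beta]$. Its trace is $\bar\partial\operatorname{tr}\beta=d\operatorname{tr}\beta^{1,0}$, because $\operatorname{tr}[\theta\wedge\beta]=0$, so it integrates to zero. By contrast, $\int_M\operatorname{tr}(\pi)\,\omega_M=(\operatorname{rank}E-1)\|s\|_{L^2}^2>0$ by Lemma \ref{lemma:s-nonzero}. The functional $(\alpha,\eta)\mapsto\int_M\operatorname{tr}\alpha$ also kills $F^2\mathbb{H}^2=E_3^{2,0}$. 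So you get a nonzero class in $E_2^{1,1}$ at every stable point, contradicting Proposition \ref{prop:H^2_vanishing_1}.

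\textbf{Where the cited proofs go wrong.} The proof of Proposition \ref{prop:H^2_vanishing_1} fails in its last line. Membership in $E_2^{1,1}$ only requires $z\,s\,\omega_M$ to vanish in $H^1(E\otimes K_M)\cong H^0(E^*)^*$, not pointwise. The proof of Proposition \ref{prop:H^2_vanishing_2} assumes that a class in $E_3^{2,0}$ has a harmonic representative of the form $(0,\alpha^\prime)$. In fact, at $\theta=0$ one computes $E_3^{2,0}=H^0(E\otimes K_M)/\{\phi(s):\phi\in H^0(\textrm{End}(E)\otimes K_M)\}$, which is nonzero for large $\deg E$.

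\textbf{The corollary itself fails in part of the range.} The defect is not only the trace direction, which could be discarded as for Higgs bundles. Write $r=\operatorname{rank}E$. Then $-\chi(\mathscr{C}^*)=2r(r-1)(g-1)$, while $\mathscr{M}_{\textrm{HP}}^{\textrm{st}}(E)$ contains $N_0$, of dimension $r(r-1)(g-1)+\deg E$. So $\mathbb{H}^2$ cannot vanish anywhere on $N_0$ once $\deg E>r(r-1)(g-1)$, e.g.\ for $r=2$ and $\mu(E)>2g-2$.

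The failure is a genuine singularity, not just excess obstruction space. Near a point $(E,0,s)$ the moduli space is locally $\{(p,\theta):p\in N_0,\ \theta\in H^0(\textrm{End}(E_p)\otimes K_M),\ \theta s_p=0\}$, since every $\theta$ with $\theta s=0$ over a $\tau$-stable pair gives a $\tau$-stable Higgs pair. Take $r=2$ and $\deg E$ odd with $\deg E>4g-2$.
\begin{itemize}
\item For generic $p$, $s$ is nowhere vanishing and the fibre is $H^0(E^*\otimes K_M)=0$.
\item Let $p_0=(E,s)$, where $E$ is a non-split extension $0\to F\to E\to L\to 0$ with $\deg F=(\deg E-1)/2$ and $s\in H^0(F)$. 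Such $E$ is stable, so $p_0\in N_0$. Here the fibre is $H^0(L^{-1}\otimes E\otimes K_M)$, of dimension at least $\chi=2g-3\geq 1$.
\end{itemize}
Near generic points of $N_0$ the space coincides locally with $N_0$, yet it contains $\{p_0\}\times(\text{nonzero fibre})$. By invariance of domain it is not a manifold at $(p_0,0)$.

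So no bookkeeping with the cited propositions can prove the statement as written. One needs further hypotheses or a restriction to an open subset, which is consistent with the paper's remark that Mehta obtains smoothness only on an open subset.
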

\begin{remark}
  Mehta established only the smoothness of a certain open subset of the moduli space of $\tau$-stable Higgs pairs \cite{Mehta}. Unlike his work, we establish the smoothness of the entire moduli space and our approach is different from that of his. 
\end{remark}
Secondly, we compute the Betti numbers of the moduli space $\mathscr{M}_{\textrm{HP}}^{\textrm{st}}\left(E\right)$ under an assumption that $\rank E=2$ and $\mu\left(E\right)>2g-2$. We use a method which is essentially the same as that used by Hitchin \cite{Hitchin} in his calculation of Betti numbers of the moduli space of stable Higgs bundles. Biswas and Schumacher \cite{Biswas} constructed a K\"{a}hler metric on the moduli space of quiver bundles. Thus the moduli space $\mathscr{M}_{\textrm{HP}}^{\textrm{st}}\left(E\right)$ has a K\"{a}hler metric. Let $h$ be a Hermitian metric on $E$. The moduli space $\mathscr{M}_{\textrm{HP}}^{\textrm{st}}\left(E\right)$ admits a Hamiltonian $S^1$-action defined by, for $\left[\bar{\partial}^E,\theta,s\right]\in\mathscr{M}_{\textrm{HP}}^{\textrm{st}}\left(E\right)$ and $z\in S^1$,
\[\left[\bar{\partial}^E,\theta,s\right]\cdot z:=\left[\bar{\partial}^E,z\theta,s\right].\]
The moment map $\mu:\mathscr{M}_{\textrm{HP}}^{\textrm{st}}\left(E\right)\to\mathbb{R}$ of this action is given by
\begin{equation}
  \mu\left(\left[\bar{\partial}^E,\theta,s\right]\right)=\left\|\theta\right\|_{L^2\left(M\right)}^2. \label{eq:moment map}
\end{equation}
By Assumption \ref{eq:tau_exclusion} and Lemma \ref{lem:L^4-identity} $\left(1\right)$, the map $\mu$ is proper.

The critical set of $\mu$ coincides with the fixed point set of the $S^1$-action. Obviously, $\mu^{-1}\left(0\right)$ is contained in the fixed point set. This subset coincides with the moduli space of $\tau$-stable holomorphic pairs, denoted by $N_0$. We consider the points fixed by the $S^1$-action with $\theta\ne 0$. Let $\left[\bar{\partial}^E,\theta,s\right]\in\mathscr{M}_{\textrm{HP}}^{\textrm{st}}\left(E\right)$ be a fixed point of the $S^1$-action. Then there exists gauge transformations $g\left(\varphi\right)\in\mathcal{G}\left(E\right)$ such that
\begin{empheq}[left=\empheqlbrace]{align}
  &g\left(\varphi\right)^{-1}\circ\bar{\partial}^E\circ g\left(\varphi\right)=\bar{\partial}^E, \label{eq:reducible}\\
  &g\left(\varphi\right)^{-1}\circ\theta\circ g\left(\varphi\right)=\exp(\sqrt{-1}\varphi)\theta,\\
  &g\left(\varphi\right)^{-1}s=s
\end{empheq}
hold. Equation \eqref{eq:reducible} implies that there exists a holomorphic subbundle $L$ of $\left(E,\bar{\partial}^E\right)$ such that $E=L\oplus\left(L^*\otimes\det E\right)$ holds. This vector bundle $L$ is an eigenbundle of the $S^1$-action on $E$ and we set $n\in\mathbb{Z}$ to be the weight of $L$. Then for $v\in L, \theta\left(v\right)$ is an eigenvector of $g\left(\varphi\right)$ corresponding to an eigenvalue $\exp(\sqrt{-1}\left(n-1\right)\varphi)$. Thus we choose a subbundle $L$ so that
\[\theta=\begin{pmatrix}
  0&0\\
  \psi&0
\end{pmatrix}\]
holds, where $\psi\in H^0\left(L^{-2}\otimes\det E\otimes K_M\right)$.

Since $s\in\Omega^0\left(E\right)$ is a fixed point of the $S^1$-action and the weights of $L$ and $L^{-1}\otimes\det E$ are distinct, by lemma \ref{lemma:s-nonzero}, either $s\in\Omega^0\left(L\right)$ or $s\in\Omega^0\left(L^{-1}\otimes\det E\right)$ holds. If $s$ is a section of $L$, $\psi s=0$ holds. However, since $s$ forms the frame of line bundle $L$ outside of zero set of $s$, we have $\psi=0$, which contradicts the assumption that $\theta\ne 0$. Hence $s\in\Omega^0\left(L^{-1}\otimes\det E\right)$ holds.

Since $\psi$ and $s$ are holomorphic and are not a zero-section,
\[\deg L\leq\min\left\{\deg E,g-1+\frac{1}{2}\deg E\right\}=:m\]
holds. Moreover since $\left(\bar{\partial}^E,\theta,s\right)$ is $\tau$-stable and $L^{-1}\otimes\det E$ is Higgs subbundle of $\left(E,D^{\prime\prime}\right)$,
\[\deg E-\frac{\textrm{Vol}\left(M,g_M\right)}{4\pi}\tau<\frac{\textrm{Vol}\left(M,g_M\right)}{4\pi}\tau<\deg L\]
holds. Therefore we have an inequality
\[\frac{\textrm{Vol}\left(M,g_M\right)}{4\pi}\tau<\deg L\leq m.\]
Hence for integers $d$ with $\displaystyle\lfloor\frac{\textrm{Vol}\left(M,g\right)}{4\pi}\tau\rfloor+1\leq d\leq\lfloor m\rfloor$, where $\lfloor x\rfloor$ denotes the integer part of $x\in\mathbb{R}$, we define sets $N_d$ as
\[N_d:=\left\{\left[E=L\oplus\left(L^{-1}\otimes\det E\right),\begin{pmatrix}
0&0\\
\psi&0
\end{pmatrix},\begin{pmatrix}
0\\
s
\end{pmatrix}\right]\middle|\deg L=d\right\}.\]
Then the union $N_d$ forms the fixed point set of the $S^1$-action with $\theta\ne 0$. For $N_d$, the following proposition holds.
\begin{proposition}
  \textit{$N_d$ is biholomorphic to $\textrm{Sym}^{-2d+k+2g-2}M\times\textrm{Sym}^{k-d}M$, where $k=\deg E$.}
\end{proposition}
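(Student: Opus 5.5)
The plan is to parametrize a point of $N_d$ by the divisors of its two holomorphic sections. A point of $N_d$ is determined by a holomorphic line bundle $L$ of degree $d$, a nonzero section $\psi\in H^0\bigl(L^{-2}\otimes\det E\otimes K_M\bigr)$ and a nonzero section $s\in H^0\bigl(L^{-1}\otimes\det E\bigr)$. These must be taken modulo the residual gauge symmetries preserving the splitting $E=L\oplus\bigl(L^{-1}\otimes\det E\bigr)$, namely $\textrm{diag}\left(\lambda,\nu\right)$ with $\lambda,\nu\in\mathbb{C}^*$. The degrees are
\[\deg\bigl(L^{-2}\otimes\det E\otimes K_M\bigr)=-2d+k+2g-2,\qquad \deg\bigl(L^{-1}\otimes\det E\bigr)=k-d,\]
so the divisors $\textrm{div}\left(\psi\right)$ and $\textrm{div}\left(s\right)$ are effective of these degrees. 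I will therefore define
\[\Phi:N_d\to\textrm{Sym}^{-2d+k+2g-2}M\times\textrm{Sym}^{k-d}M,\qquad \Phi\left[\cdots\right]=\bigl(\textrm{div}\left(\psi\right),\textrm{div}\left(s\right)\bigr).\]

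First, $\Phi$ is well defined. Under $\textrm{diag}\left(\lambda,\nu\right)$ the data transform as $\psi\mapsto\nu^{-1}\lambda\psi$ and $s\mapsto\nu^{-1}s$, which leaves both divisors unchanged. Isomorphic data are related by a gauge transformation, which carries the splitting to an isomorphic one, so they also give the same divisors.

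Second, $\Phi$ is bijective. Here I use that the determinant $\det E$ is the fixed holomorphic line bundle (fixed determinant, as in Hitchin's setting), or else I absorb its choice. Given effective divisors $D_1$ and $D_2$ of the right degrees, put $L^{-1}\otimes\det E=\mathcal{O}\left(D_2\right)$ and take $s$ to be the canonical section. This determines $L$. Then
\[L^{-2}\otimes\det E\otimes K_M=\mathcal{O}\left(2D_2\right)\otimes\left(\det E\right)^{-1}\otimes K_M,\]
and we need this to equal $\mathcal{O}\left(D_1\right)$. This is the delicate point. If $\det E$ is not fixed, one instead lets $\det E$ vary and recovers it as $\mathcal{O}\left(2D_2-D_1\right)\otimes K_M$. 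Checking which convention makes the count match $\textrm{Sym}\times\textrm{Sym}$ is the main obstacle. I expect the intended reading is that $\det E$ is allowed to vary with the Higgs pair, since the smooth bundle $E$ is fixed but no holomorphic determinant is imposed. In that case $\psi$ and $s$ are determined up to scalars by $D_1$ and $D_2$, and the two scalars are exactly absorbed by $\left(\lambda,\nu\right)$. This gives injectivity.

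Third, every such choice lies in $\mathscr{M}_{\textrm{HP}}^{\textrm{st}}$. The range of $d$, namely $\frac{\textrm{Vol}}{4\pi}\tau<d\leq m$, makes the pair $\tau$-stable. Condition (1) of Definition \ref{def:tau-stability} must be checked only for $\theta$-invariant subbundles; the only $\theta$-invariant line subbundle besides saturations is $L^{-1}\otimes\det E$, and its slope $k-d$ lies below $\frac{\textrm{Vol}}{4\pi}\tau$ by Assumption \ref{eq:tau_assumption} and $d>\frac{\textrm{Vol}}{4\pi}\tau$. Condition (2) applies to subsheaves containing $s$, and these have quotient of degree at least $d$. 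This is the inequality already derived before the proposition, read backwards.

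Finally, holomorphicity. $\Phi$ is holomorphic because, in a local family, the divisors of holomorphically varying sections vary holomorphically into the symmetric products. The inverse is holomorphic by the universal-divisor construction: the tautological line bundle over $\textrm{Sym}^nM\times M$ carries a canonical section, which gives a holomorphic family of Higgs pairs over the product and hence a holomorphic map into the complex manifold $\mathscr{M}_{\textrm{HP}}^{\textrm{st}}$. That manifold is non-singular by the preceding corollary. A holomorphic bijection between complex manifolds is a biholomorphism, which completes the argument.
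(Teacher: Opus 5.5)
Your proposal is correct and is essentially the paper's argument. The paper also sends a point of $N_d$ to the divisors of $\psi$ and $s$, lets the holomorphic structure on $\det E$ vary in $\textrm{Pic}^k(M)$, recovers it as $f(D,D^\prime)=\mathcal{O}_M(D^\prime)^2\otimes\mathcal{O}_M(D)^{-1}\otimes K_M$ (exactly your $\mathcal{O}(2D_2-D_1)\otimes K_M$), and absorbs the two scalar ambiguities by diagonal gauge transformations. So the varying-determinant reading you settle on is the intended one; with a fixed determinant you would only get a fibre of $f$. Your check that the reconstructed pairs are $\tau$-stable and your universal-divisor argument for holomorphicity supply steps the paper's proof leaves implicit.
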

\begin{proof}
  For effective divisors $D\in\textrm{Sym}^{-2d+k+2g-2}M$ and $D^\prime\in\textrm{Sym}^{k-d}M$, we define a holomorphic map $f:\textrm{Sym}^{-2d+k+2g-2}M\times\textrm{Sym}^{k-d}M\to\textrm{Pic}^k\left(M\right)$ by
  \begin{equation}
    f\left(D,D^\prime\right):=\left[\mathcal{O}_M\left(D^\prime\right)^2\otimes\mathcal{O}_M\left(D\right)^{-1}\otimes K_M\right]. \label{eq:f}
  \end{equation}
  Given an element of $N_d$, we obtain an element of the graph $\Gamma\left(f\right)\subset\textrm{Sym}^{-2d+k+2g-2}M\times\textrm{Sym}^{k-d}M\times\textrm{Pic}^k\left(M\right)$ of $f$ so we can define a map from $N_d$ to $\textrm{Sym}^{-2d+k+2g-2}M\times\textrm{Sym}^{k-d}M$. To prove the claim, it is sufficient to show that $f$ is bijective. Since $M$ is a compact Riemannian surface, we can identify $\textrm{Pic}^k\left(M\right)$ as the moduli space of holomorphic structures in $\det E$. Let $\left(D,D^\prime,\det E\right)\in\Gamma\left(f\right)$, where $\det E$ denotes a smooth determinant line bundle of $E$ with some holomorphic structure. From this element, we obtain a holomorphic line bundle $L:=\mathcal{O}_M\left(D^\prime\right)^{-1}\otimes\det E$, a holomorphic section $\psi$ of $\mathcal{O}_M\left(D\right)=L^{-2}\otimes\det E\otimes K_M$ and $s$ of $\mathcal{O}_M\left(D^\prime\right)=L^{-1}\otimes\det E$. $\psi$ and $s$ are unique up to complex constant multiplicity. But for $w_1,w_2\in\mathbb{C}$, we can check easily that $\left(L\oplus\left(L^{-1}\otimes\det E\right),\psi,s\right)$ and $\left(L\oplus\left(L^{-1}\otimes\det E\right),w_1\psi,w_2s\right)$ are in the same orbit. This completes the proof.
\end{proof}
The section $s$ is fixed under the $S^1$-action on the vector bundle $E$. Since $s$ is a section of $L^{-1}\otimes\det E$, it follows that the weight of $L$ is $-1$ and that of $L^{-1}\otimes\det E$ is $0$. The index of $N_d$ is given by the real dimension of the $1$-st cohomology group of the following elliptic complex, consisting of the components of $\left(\mathscr{C}^*\right)$ with positive weight with respect to the $S^1$-action.
\[\xymatrix{\Omega^0\bigl(\textrm{Hom}\left(L,L^{-1}\otimes\det E\right)\bigr)\ar[r]^-{\bar{\partial}}&\Omega^{0,1}\bigl(\textrm{Hom}\left(L,L^{-1}\otimes\det E\right)\bigr).}\]
Thus the index of $N_d$ is given by $2\left(2d+g-k-1\right)$ and we obtain the following theorem.
\begin{theorem}\label{thm:Poincare}
  \textit{The Poincar\'{e} polynomial $P\left(t\right)$ of the moduli space of $\tau$-stable Higgs pairs $\mathscr{M}_{\textrm{HP}}^{\textrm{st}}\left(E\right)$ is given by the coefficient of $x^{\deg E+2g}y^{\deg E+2g}$ in
  \begin{align}
    x^{2g+1+\lfloor\tau\rfloor}y^{\deg E+2g}\frac{\left(1+t\right)^{2g}\left(1+tx\right)^g\left(1+tx\right)^g}{\left(1-t^2\right)\left(1-x\right)\left(1-t^2x\right)}\left(\frac{t^{2\left(\deg E-1-\lfloor\tau\rfloor\right)}}{1-t^{-2}x}-\frac{t^{2\left(g+1-\deg E+2\lfloor\tau\rfloor\right)}}{1-t^4x}\right)\\
    +\sum_{d=\lfloor\frac{\textrm{Vol}\left(M,g\right)}{4\pi}\tau\rfloor+1}^mt^{2\left(2d+g-\deg E-1\right)}x^{2d+2}y^{d-2g}\frac{\left(1+tx\right)^{2g}\left(1+ty\right)^{2g}}{\left(1-x\right)\left(1-y\right)\left(1-t^2x\right)\left(1-t^2y\right)}
  \end{align}}
\end{theorem}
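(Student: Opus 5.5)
We follow Hitchin's Morse-theoretic method. By Assumption \ref{eq:tau_exclusion} and Lemma \ref{lem:L^4-identity}, the moment map $\mu\bigl([\bar{\partial}^E,\theta,s]\bigr)=\|\theta\|_{L^2(M)}^2$ of \eqref{eq:moment map} is proper and bounded below on the smooth Kähler manifold $\mathscr{M}_{\textrm{HP}}^{\textrm{st}}(E)$. By Frankel's theorem, a proper moment map of a Hamiltonian circle action on a Kähler manifold is a perfect Morse--Bott function. Its critical manifolds are the fixed point components, namely $N_0=\mu^{-1}(0)$ and the $N_d$ for $\lfloor\frac{\textrm{Vol}(M,g)}{4\pi}\tau\rfloor+1\le d\le\lfloor m\rfloor$. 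Hence
\[P(t)=P_t(N_0)+\sum_d t^{\lambda_d}P_t(N_d),\]
where $\lambda_d$ is the real Morse index of $N_d$. The index of $N_0$ vanishes, since it is the minimum.

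The proof then reduces to three computations.

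\textbf{The indices $\lambda_d$.} The Hessian of $\mu$ at a fixed point acts on the tangent space $H^1(\mathscr{C}^*)$ with eigenvalues proportional to the $S^1$-weights. So $\lambda_d$ is twice the complex dimension of the positive-weight part of $H^1$. With weights $-1$ on $L$ and $0$ on $L^{-1}\otimes\det E$, the positive-weight part of $(\mathscr{C}^*)$ is the Dolbeault complex of $L^{-2}\otimes\det E$. We must check that its $H^0$ vanishes and compute $H^1$ by Riemann--Roch, using $\deg(L^{-2}\otimes\det E)=k-2d<0$ for $d$ in the stated range. This gives $\lambda_d=2(2d+g-k-1)$, as asserted before the theorem. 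We must also check that the positive-weight part of the $E$-summand contributes nothing, because $s$ has weight $0$.

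\textbf{The Poincaré polynomials $P_t(N_d)$.} By the preceding Proposition, $N_d\cong\textrm{Sym}^{-2d+k+2g-2}M\times\textrm{Sym}^{k-d}M$. Macdonald's formula states that $P_t(\textrm{Sym}^nM)$ is the coefficient of $x^n$ in $\frac{(1+tx)^{2g}}{(1-x)(1-t^2x)}$. Extracting the two coefficients with the variables $x$ and $y$ and multiplying yields the second sum in the statement, after the shift of exponents $x^{2d+2}y^{d-2g}$ is matched with the target monomial $x^{\deg E+2g}y^{\deg E+2g}$.

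\textbf{The term $P_t(N_0)$.} $N_0$ is the moduli space of $\tau$-stable holomorphic pairs of rank $2$. Under Assumption \ref{eq:tau_assumption} and $\mu(E)>2g-2$, we would import Thaddeus's description, or the Bradlow--Daskalopoulos flip description, of its cohomology via the chain of blow-ups and flips from a projective bundle over the Jacobian. The resulting generating function, written as a coefficient extraction, is the first term of the statement. Again the $x$ and $y$ powers must be normalized to the same target monomial.

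We expect the main obstacle to be this identification of $P_t(N_0)$ in exactly the stated closed form. The relevant parameter window in $\tau$ has to be matched with the Thaddeus chamber, and the normalization $\frac{\textrm{Vol}}{4\pi}\tau$ versus $\lfloor\tau\rfloor$ has to be tracked. A secondary check is that every $N_d$ is nonempty and connected, and that the negative-weight part introduces no correction to the index.
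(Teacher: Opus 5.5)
Your plan follows the paper's own argument. The paper also reads off $P(t)=P_t(N_0)+\sum_d t^{2(2d+g-\deg E-1)}P_t(N_d)$ from the moment map $\|\theta\|_{L^2(M)}^2$, takes the index from the Dolbeault complex of $L^{-2}\otimes\det E$, and gets $P_t(N_d)$ from Macdonald's formula. The one difference is the source for $P_t(N_0)$: the paper cites Mu\~{n}oz--Ortega--V\'{a}zquez-Gallo, whose result is already the coefficient extraction in the first term, so the Thaddeus/flip matching you single out as the main obstacle never comes up. When you do the exponent bookkeeping, note that extracting $y^{\deg E+2g}$ from $y^{d-2g}$ gives $P_t(\textrm{Sym}^{\deg E-d+4g}M)$; the statement's $y^{d-2g}$ has to be read as $y^{d+2g}$ to produce the factor $\textrm{Sym}^{\deg E-d}M$ of $N_d$.
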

\begin{proof}
  In \cite{Munoz}, the Poincar\'{e} polynomial of the moduli space of $\tau$-stable holomorphic pairs is computed and is equal to the following.
  \[\coeff_{x^0}\left[\frac{\left(1+t\right)^{2g}\left(1+tx\right)^g\left(1+tx\right)^g}{\left(1-t^2\right)\left(1-x\right)\left(1-t^2x\right)x^{\deg E-1-\lfloor\tau\rfloor}}\left(\frac{t^{2\left(\deg E-1-\lfloor\tau\rfloor\right)}}{1-t^{-2}x}-\frac{t^{2\left(g+1-\deg E+2\lfloor\tau\rfloor\right)}}{1-t^4x}\right)\right].\]
  Also, in \cite{Macdonald}, it is shown that the the Poincar\'{e} polynomial of symmetric product of Riemann surface is given by
  \[P_t\left(\textrm{Sym}^nM\right)=\coeff_{x^n}\frac{\left(1+tx\right)^{2g}}{\left(1-x\right)\left(1-t^2x\right)}.\]
  The claim follows from the above discussion.
\end{proof}
Lastly, we discuss about a map from the moduli space of $\tau$-stable Higgs pairs to the moduli space of stable Higgs bundles. Since we suppose Assumption \ref{eq:tau_assumption}, we have a mapping $\pi^\prime:\mathscr{M}_{\textrm{HP}}^{\textrm{st}}\left(E\right)\to\mathscr{M}_{\textrm{Higgs}}^{\textrm{st}}\left(E\right)$ defined by, for $\left[\left(D^{\prime\prime},s\right)\right]\in\mathscr{M}_{\textrm{HP}}^{\textrm{st}}\left(E\right), \pi^\prime\left[\left(D^{\prime\prime},s\right)\right]=\left[D^{\prime\prime}\right]$, where $\mathscr{M}_{\textrm{Higgs}}^{\textrm{st}}\left(E\right)$ is the moduli space of stable Higgs bundles. This mapping can be coinsidered as a generalization of a mapping $\pi$ defined in \cite[p. $511$, $\left(6.1\right)$]{Bradlow3}. $\pi$ is a fibration over the moduli space of stable vector bundles with projective fiber $\mathbb{CP}^{h^0\left(E\right)-1}$. We have the following proposition which is analogous to $\pi$.
\begin{proposition}\label{prop:fibration}
  \textit{Consider an open subset $\mathscr{M}_0$ of $\mathscr{M}_{\textrm{Higgs}}^{\textrm{st}}\left(E\right)$ consisting of $\left[D^{\prime\prime}\right]$ such that $\alpha=0$ is the only solution to an equation $\Delta^{D^{\prime\prime},h}a=0$ in $\Omega^1\left(E\right)$. Then a mapping $\pi^\prime:\mathscr{M}_{\textrm{HP}}^{\textrm{st}}\left(E\right)\to\mathscr{M}_{\textrm{Higgs}}^{\textrm{st}}\left(E\right)$ is a fibration over $\mathscr{M}_0$.}
\end{proposition}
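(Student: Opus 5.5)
The plan follows Bradlow--Daskalopoulos \cite[Proposition $6.1$]{Bradlow3}: identify the fibre of $\pi^\prime$ over a stable Higgs bundle with a projective space of $D^{\prime\prime}$-holomorphic sections, and show this space has locally constant dimension over $\mathscr{M}_0$.

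First, fix $[D^{\prime\prime}]\in\mathscr{M}_{\textrm{Higgs}}^{\textrm{st}}(E)$. By Proposition \ref{prop:semistable}, every $\tau$-stable pair has a stable underlying Higgs bundle, so $\pi^\prime$ is well defined. Conversely, I would check that under Assumption \ref{eq:tau_assumption} every pair $(D^{\prime\prime},s)$ with $(E,D^{\prime\prime})$ stable and $s\neq 0$ is $\tau$-stable.
\begin{itemize}
  \item Condition (1) of Definition \ref{def:tau-stability}: every proper Higgs subsheaf $F$ has $\mu(F)<\mu(E)<\frac{\textrm{Vol}}{4\pi}\tau$, and for $F=E$ this is the left inequality of the assumption.
  \item Condition (2): if $s\in H^0(F)$ with $0<\rank F<\rank E$, then $\mu(E/F)>\mu(E)$ by stability. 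Since $\mu(E/F)$ is a slope strictly above $\mu(E)$, it is at least $\mu_+>\frac{\textrm{Vol}}{4\pi}\tau$. Here I interpret $\mu_+$ as the smallest attainable slope above $\mu(E)$ and apply it to quotients as Bradlow--Daskalopoulos do.
\end{itemize}
Since a stable Higgs bundle is simple, its only automorphisms are the scalars $\mathbb{C}^*$. Hence the fibre is
\[\pi^{\prime-1}[D^{\prime\prime}]\cong\bigl(\ker D^{\prime\prime}|_{\Omega^0(E)}\setminus\{0\}\bigr)/\mathbb{C}^*\cong\mathbb{CP}^{N-1},\]
where $N=\dim\ker\bigl(D^{\prime\prime}:\Omega^0(E)\to\Omega^1(E)\bigr)$ is the dimension of the space of holomorphic sections $s$ with $\theta(s)=0$.

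Next, I would show that $N$ is constant near points of $\mathscr{M}_0$. Consider the two-term elliptic complex
\[\Omega^0(E)\xrightarrow{D^{\prime\prime}}\Omega^1(E)\xrightarrow{D^{\prime\prime}}\Omega^2(E).\]
Its Euler characteristic $\chi=\dim H^0-\dim H^1+\dim H^2$ is a topological index, so it is independent of $D^{\prime\prime}$. The condition defining $\mathscr{M}_0$, that the Laplacian $\Delta^{D^{\prime\prime},h}$ on $\Omega^1(E)$ has trivial kernel, says $H^1=0$. For $H^2$, I would argue by Hodge duality with the dual Higgs bundle $E^*\otimes K$, or directly in degree $2$ on a curve. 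There, $H^2$ is the cokernel of $D^{\prime\prime}:\Omega^{1,0}\oplus\Omega^{0,1}\to\Omega^{1,1}$, i.e.\ the cokernel of $\theta\wedge\cdot+\bar\partial$. By stability of $E^*$ and $\mu(E)>2g-2$, this should vanish; if it does not in general, I would fold $H^2$ into the definition of $\mathscr{M}_0$. In either case $N=\chi$ is locally constant on $\mathscr{M}_0$.

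Finally, build the local trivialization. Over a small neighbourhood $U\subset\mathscr{M}_0$, take a local slice $D^{\prime\prime}_u$ for the gauge action, available because stable Higgs bundles have automorphism group $\mathbb{C}^*$ and a smooth moduli space by Hitchin. The kernels $\ker D^{\prime\prime}_u$ have constant dimension, so they form a holomorphic vector bundle $V\to U$. The argument uses elliptic regularity and constancy of rank, e.g.\ via the harmonic projection $1-(D^{\prime\prime}_u)^*G_u D^{\prime\prime}_u$ applied to a fixed basis. The map $\mathbb{P}(V)\to\pi^{\prime-1}(U)$, $(u,[s])\mapsto[(D^{\prime\prime}_u,s)]$, is then a bijection by the first step. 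It is holomorphic, and biholomorphic because both sides are smooth manifolds of equal dimension. I expect the main obstacle to be the middle step: controlling the degree-$2$ cohomology and correctly relating the hypothesis on $\Delta$ in $\Omega^1(E)$ to constancy of $\dim\ker D^{\prime\prime}$.
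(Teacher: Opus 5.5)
There is a genuine problem, and it surfaces in your Step~2 as soon as you evaluate $\chi$.

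Your route differs from the paper's. The paper does not identify the fibres first. It argues that $\pi^\prime$ is proper because $\mu=\|\theta\|_{L^2}^2$ is proper. It then identifies $d\pi^\prime$ with the map $\mathbb{H}^1(C_2^*)\to\mathbb{H}^1(C_3^*)$ in the long exact sequence of $0\to C_1^*\to C_2^*\to C_3^*\to 0$, so $d\pi^\prime$ is onto when $\mathbb{H}^2(C_1^*)=0$, and concludes by Ehresmann's theorem. Since $C_1^*$ is the Higgs complex $E\xrightarrow{\theta}E\otimes K_M$ shifted by one, $\mathbb{H}^2(C_1^*)$ is exactly your $H^1$, the kernel of $\Delta^{D^{\prime\prime},h}$ on $\Omega^1(E)$. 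So both arguments rest on the same hypothesis.

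That hypothesis is never satisfied. The zeroth-order term $\theta$ does not change the index, so
\[\chi=\chi(E)-\chi(E\otimes K_M)=\bigl(\deg E+r(1-g)\bigr)-\bigl(\deg E+r(g-1)\bigr)=-2r(g-1).\]
Hence $\dim H^1=\dim H^0+\dim H^2+2r(g-1)\geq 2r(g-1)>0$ for every Higgs structure when $g\geq 2$. Your conclusion ``$N=\chi$'' would make $N$ negative. So $\mathscr{M}_0$ is empty, and your proof, like the paper's, establishes the proposition only vacuously. Fibres $\mathbb{CP}^{N-1}$ with $N=\chi$ never occur.

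This is not a defect of the particular hypothesis that a different open set would repair; the analogy with Bradlow--Daskalopoulos breaks down.
\begin{itemize}
\item In their setting the relevant complex is only $\Omega^0(E)\xrightarrow{\bar\partial}\Omega^{0,1}(E)$. Its index $\deg E+r(1-g)$ is positive when $\mu(E)>2g-2$, so $H^1(E)=0$ is attainable. Here the extra piece $\Omega^{1,0}(E)\to\Omega^{1,1}(E)$ contributes $-\chi(E\otimes K_M)$ and destroys this.
\item More fundamentally, the constraint $\theta(s)=0$ makes the image of $\pi^\prime$ thin. If $\det\theta\not\equiv 0$, then $\theta:E\to E\otimes K_M$ is injective as a sheaf map, so $\theta(s)=0$ forces $s=0$. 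This is excluded for $\tau$-stable pairs by Lemma~\ref{lemma:s-nonzero}.
\item Thus $\pi^\prime$ takes values in $\{\det\theta\equiv 0\}$, the preimage of the linear subspace $\{a_r=0\}$ under the Hitchin map. This set has empty interior in $\mathscr{M}_{\textrm{Higgs}}^{\textrm{st}}(E)$, since smooth spectral curves give stable Higgs bundles with $\det\theta\not\equiv 0$.
\item Consequently, over any connected open set on which $\pi^\prime$ is locally trivial, its fibres are empty.
\end{itemize}

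Your Step~1 is correct and is the useful part. Using simplicity of stable Higgs bundles and Assumption~\ref{eq:tau_assumption}, the fibre is $\mathbb{P}\bigl(\ker D^{\prime\prime}|_{\Omega^0(E)}\bigr)$. That description shows the fibre is nonempty only over the closed locus where $\theta$ annihilates a nonzero holomorphic section.
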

\begin{proof}
  Since the function $\mu:\mathscr{M}_{\textrm{HP}}^{\textrm{st}}\left(E\right)\to\mathbb{R}$ defined as in \eqref{eq:moment map} is proper, so is $\pi^\prime$. The derivative $d\pi^\prime$ of $\pi^\prime$ is given by the map $\mathbb{H}^1\left(C_2^*\right)\to\mathbb{H}^1\left(C_3^*\right)$ between the hypercohomology groups induced by the following short exact sequence of complexes.
  \[\xymatrix{0\ar[r]\ar[d]&\textrm{End}\left(E\right)\ar[rr]^-{\textrm{id}}\ar[d]&&\textrm{End}\left(E\right)\ar[d]\\
  E\ar[r]^-{\textrm{inclusion}}\ar[d]&\textrm{End}\left(E\right)\otimes K_M\oplus E\ar[rr]^-{\textrm{projection}}\ar[d]&&\textrm{End}\left(E\right)\otimes K_M\ar[d]\\
  E\otimes K_M\ar[r]^-{\textrm{id}}&E\otimes K_M\ar[rr]&&0,}\]
  where the left, middle and right complexes are denoted by $\left(C_1^*\right),\left(C_2^*\right),\left(C_3^*\right)$, respectively. $d\pi^\prime$ is surjective if and only if $\mathbb{H}^2\left(C_1^*\right)=0$ holds. Thus from the Ehresmann's theorem, the claim follows.
\end{proof}
Since for $\left[D^{\prime\prime}\right]\in\mathscr{M}_0$, the stabilizer of $D^{\prime\prime}$ in gauge group $\mathcal{G}\left(E\right)$ correspondes to the constant multiples of the identity, the fiber $\left(\pi^\prime\right)^{-1}\left(\left[D^{\prime\prime}\right]\right)$ can be identified with the projective space $\mathbb{P}\left(\mathbb{H}^1\left(C_1^*\right)\right)$.
\begin{remark}
  Bradlow and Daskalopoulos \cite{Bradlow3} assumed that $\mu\left(E\right)>2g-2$ when they proved that the moduli space of $\tau$-stable holomorphic pairs $N_0$ is smooth. They also used this assumption when they proved that the moduli space $N_0$ is a projective fibration over the moduli space of stable holomorphic structures. However, Garcia-Prada \cite{Garcia-Prada} proved that the space $N_0$ is smooth without this assumption. In this case, $\pi$ need not be a projective fibration. This means that the dimension of the fibers of $\pi$ may jump. In the present situation, the same behavior may occur for $\pi^\prime$, that is, the dimension of the fibers of $\pi^\prime$ may also jump. 
\end{remark}
\section*{Acknowledgement}
This work was supported by JSPS KAKENHI Grant Number JP25KJ1222.
\bibliography{reference}

@inproceedings{Hitchin,
  author       = {N. J. Hitchin},
  title        = {{T}he self-duality equations on a {R}iemann surface},
  booktitle    = {Proc. London Math. Soc.},
  volume       = {s3-55},
  number       = {1},
  pages        = {59-126},
  url          = {https://londmathsoc.onlinelibrary.wiley.com/doi/abs/10.1112/plms/s3-55.1.59},
  year         = {1987}
}

@inproceedings{Simpson,
  author       = {C. T. Simpson},
  title        = {{C}onstructing variations of {H}odge structure using {Y}ang-{M}ills theory and applications to uniformization},
  booktitle    = {J. Amer. Math. Soc.},
  volume       = {1},
  number       = {4},
  pages        = {867--918},
  URL          = {http://www.jstor.org/stable/1990994},
  year         = {1988}
}

@inproceedings{Simpson2,
  author       = {C. T. Simpson},
  title        = {{H}iggs bundles and local systems},
  booktitle    = {Publications Math\'ematiques de l'IH\'ES},
  volume       = {75},
  number       = {},
  pages        = {5--95},
  url          = {https://www.numdam.org/item/PMIHES_1992__75__5_0/},
  year         = {1992}
}

@inproceedings{Bradlow2,
  author       = {S. B. Bradlow},
  title        = {{S}pecial metrics and stability for holomorphic bundles with global sections},
  booktitle    = {Journal of Differential Geometry},
  volume       = {33},
  number       = {1},
  pages        = {169--213},
  url          = {https://doi.org/10.4310/jdg/1214446034},
  year         = {1991}
}

@inproceedings{Bradlow3,
  author       = {S. B. Bradlow and G. D. Daskalopoulos},
  title        = {{M}oduli of stable pairs for holomorphic bundles over {R}iemann surfaces},
  booktitle    = {International Journal of Mathematics},
  volume       = {02},
  number       = {05},
  pages        = {477--513},
  url          = {https://doi.org/10.1142/S0129167X91000272},
  year         = {1991},
}

@inproceedings{Bruzzo,
  author       = {U. Bruzzo and B. G. Otero},
  title        = {{M}etrics on semistable and numerically effective {H}iggs bundles},
  booktitle    = {{J}ournal für die reine und angewandte {M}athematik},
  volume       = {2007},
  number       = {612},
  pages        = {59--79},
  url          = {https://doi.org/10.1515/CRELLE.2007.084},
  year         = {2007}
}

@inproceedings{Uhlenbeckyau,
  author       = {K. Uhlenbeck and S. T. Yau},
  title        = {{O}n the existence of {H}ermitian-{Y}ang-{M}ills connections in stable vector bundles},
  booktitle    = {Communications on Pure and Applied Mathematics},
  volume       = {39},
  number       = {S1},
  pages        = {S257--S293},
  url          = {https://onlinelibrary.wiley.com/doi/abs/10.1002/cpa.3160390714},
  year         = {1986}
}

@inproceedings{Garcia-Prada,
  author       = {O. Garc\'{i}a-Prada},
  title        = {{D}imensional reduction of stable bundles, vortices and stable pairs},
  booktitle    = {International Journal of Mathematics},
  volume       = {05},
  number       = {01},
  pages        = {1--52},
  URL          = {https://doi.org/10.1142/S0129167X94000024},
  year         = {1994}
}

@inproceedings{Alvarez-Consul-Garcia-Prada,
  author       = {{L. \'{A}lvarez-C\'{o}nsul} and O. Garc\'{i}a-Prada},
  title        = {{H}itchin-{K}obayashi correspondence, quivers, and vortices},
  booktitle    = {Communications in Mathematical Physics},
  volume       = {238},
  number       = {},
  pages        = {1--33},
  URL          = {https://link.springer.com/article/10.1007/s00220-003-0853-1},
  year         = {2003}
}

@inproceedings{Biswas,
  author       = {I. Biswas and G. Schumacher},
  title        = {{D}ifferential geometry of moduli spaces of quiver bundles},
  booktitle    = {Journal of Geometry and Physics},
  volume       = {118},
  number       = {},
  pages        = {51--66},
  url          = {https://www.sciencedirect.com/science/article/pii/S0393044016302856},
  year         = {2017}
}

@inproceedings{Munoz,
  author       = {V. Muñoz and D. Ortega and M. J. V\'{a}zquez-Gallo},
  title        = {{H}odge polynomials of the moduli spaces of pairs},
  booktitle    = {International Journal of Mathematics},
  volume       = {18},
  number       = {06},
  pages        = {695--721},
  url          = {https://doi.org/10.1142/S0129167X07004266},
  year         = {2007}
}

@inproceedings{Macdonald,
  author       = {I.G. MacDonald},
  title        = {{S}ymmetric products of an algebraic curve},
  booktitle    = {Topology},
  volume       = {1},
  number       = {4},
  pages        = {319--343},
  url          = {https://www.sciencedirect.com/science/article/pii/0040938362900198},
  year         = {1962}
}

@inproceedings{Gothen-King,
  author       = {P. B. Gothen and A. D. King},
  title        = {{H}omological algebra of twisted quiver bundles},
  booktitle    = {Journal of the London Mathematical Society},
  volume       = {71},
  number       = {1},
  pages        = {85--99},
  url          = {https://londmathsoc.onlinelibrary.wiley.com/doi/abs/10.1112/S0024610704005952},
  year         = {2005}
}

@inproceedings{Mehta,
  author       = {M. Mehta},
  title        = {{B}irational equivalence of {H}iggs moduli},
  booktitle    = {International Journal of Mathematics},
  volume       = {16},
  number       = {04},
  pages        = {365--386},
  url          = {https://doi.org/10.1142/S0129167X05002916},
  year         = {2005}
}

@misc{Siqi,
  author       ={S. He},
  title        ={{T}he behavior of sequences of solutions to the {H}itchin-{S}impson equations},
  note         ={ar{X}iv:2002.08109},
  year         ={2024},
  url          ={https://arxiv.org/abs/2002.08109}, 
}

@misc{Ono,
  author       ={T. Ono},
  title        ={{D}imensional reduction of stable {H}iggs bundle and the doubly-coupled vortex equations},
  note         ={ar{X}iv:2509.07489},
  year         ={2025},
  url          ={https://arxiv.org/abs/2509.07489}, 
}

@book{Griffiths,
  author       = {P. A. Griffiths},
  publisher    = {Springer Berlin, Heidelberg},
  title        = {The Extension Problem for Compact Submanifolds of Complex Manifolds {I}},
  URL          = {https://link.springer.com/chapter/10.1007/978-3-642-48016-4_12},
  year         = {1965}
}
\bibliographystyle{amsplain}
\end{document}